\documentclass{article}
\usepackage{color,latexsym,amsmath,amssymb,amsthm,path,url,enumerate,hyperref, graphicx, tikz}
\usetikzlibrary{calc}
\usepackage{hyperref}
\usepackage{caption}
\usepackage{subcaption}

\usepackage{amssymb,latexsym}
\usepackage{amsmath}
\usepackage{graphicx}
\usepackage{textcomp}

\usepackage{amsthm,amssymb,enumerate,graphicx, tikz}
\usepackage{amscd}
\usepackage{setspace}
\usepackage{comment}
\usepackage{hyperref}
\usepackage{cleveref}

\newtheorem{theorem}{Theorem}[section]

\newtheorem{claim}[theorem]{Claim}
\newtheorem{conjecture}[theorem]{Conjecture}

\newtheorem{definition}[theorem]{Definition}

\newtheorem{problem}[theorem]{Problem}
\newtheorem{question}[theorem]{Question}

\newcommand{\R}{\mathbb{R}}

\newcommand{\F}{\mathcal{F}}

\renewcommand{\geq}{\geqslant}
\renewcommand{\leq}{\leqslant}
\renewcommand{\ge}{\geqslant}
\renewcommand{\le}{\leqslant}

\def\cref#1{Corollary~$\ref{#1}$}

\def\b1{\bar{1}}
\def\cb1{\cdot \bar{1}}
\DeclareMathOperator{\conv}{\mathrm{conv}}
\usepackage{array,color,colortbl}

\newcommand{\M}{\mathcal{M}}

\newcommand{\supp}{\textrm{supp}}

%%%%%%%%%%%%%%%%%%%%%%%%%%%%%%%%%%%%%%%%%%%%%%%%%%%%%%%%%%%%%%%%%%%%%%%%%%%%%%%%%

\title{Using the KKM theorem}
\author{
Daniel McGinnis \footnote{Department of Mathematics, Princeton University, USA. \texttt{dm7932@princeton.edu}. Supported by NSF award no. 2402145.}
\and Shira Zerbib  \footnote{Department of Mathematics, Iowa State University, USA. \texttt{zerbib@iastate.edu}. Supported by NSF CAREER award no. 2336239, NSF award no. DMS-195392, and Simons Foundation award no. MP-TSM-00002629.}}
\date{}

\begin{document}

\maketitle

\begin{abstract}
    The KKM theorem, due to Knaster, Kuratowski, and Mazurkiewicz in 1929, is a fundamental result in  fixed-point theory, which has seen numerous extensions and applications. In this paper we survey old and recent generalizations of the KKM theorem and their applications in the areas of piercing numbers, mass partition, fair division, and matching theory. We also give a few new results utilizing KKM-type theorems, and discuss related  open problems.   
\end{abstract}

\section{Introduction}
    Over the last few decades, progress in the field of topological methods in combinatorics has been rapid and has seen the resolution of major open problems. Some examples include Lov\'asz's solution for the Kneser graph coloring conjecture \cite{Lovasz}, Tardos and Kaiser's proof of the $d$-interval piercing number conjecture \cite{tardos, kaiser}, Alon's solution for the necklace splitting problem \cite{alonneck}, and Aharoni's proof of the case $r=3$ of Ryser's conjecture \cite{ryser3}. See \cite{matousek2003using} for more example of topological methods in combinatorics.
In all the aforementioned examples,  the main tool used for the solution  is a classical topological theorem (e.g., Sperner's lemma \cite{sperner} or the Borsuk-Ulam theorem \cite{Borsuk}).

In this survey we focus on applications of yet another classical topological theorem, the KKM theorem \cite{kkm},  and its numerous extensions, to problems  within three areas of research: (1) piercing numbers of families of sets, (2) problems concerning fair division, and (3) mass partition problems.
 Despite their seemingly disparate nature, these subjects share significant commonalities. For instance, fair division problems can be regarded as ``colorful" mass partition problems, and piercing sets with hyperplanes  generate their partition.  However, the  main motif connecting these domains is that they are all amenable to a similar topological approach, utilizing the KKM theorem \cite{kkm} and its extensions.

  \section{The KKM method}\label{sec:framework}

We start by introducing the KKM theorem and several of its prominent extensions. 
Then,  we will demonstrate the method by applying the KKM theorem to prove a classical theorem of Gallai.

  \subsection{KKM-type theorems}\label{sec:kkm}
If $P$  is a polytope with vertex set $\{v_1,\dots, v_k\}$ and $\sigma$ is a face of $P$, we write $i\in \sigma$ if $v_i$ is a vertex in $\sigma$. Let $F(P)$ be the set of all non-empty faces of $P$.

\begin{definition}\hfill
\begin{itemize}
   \item {\em (KKM cover.)} A  {\em KKM cover} of the $(k-1)$-dimensional simplex $\Delta_{k-1}$ is a family  $\{A_1, \dots, A_{k}\}$ of subsets of $\Delta_{k-1}$, all closed or all open, satisfying  the {\em KKM covering condition}:
 for every face  $\sigma$ of  $\Delta_{k-1}$ (including  $\Delta_{k-1}$  itself), we have $\sigma \subseteq \bigcup_{i \subseteq \sigma}A_i.$

\item  {\em (KKMS cover.)} A  {\em KKMS cover} of a polytope $P$ is a family  $\{A_\sigma \mid \sigma \in F(P)\}$ of (possibly empty) subsets of $P$, all closed or all open, satisfying the {\em KKMS covering condition}:
for every face  $\sigma$ of   $P$ (including  $P$  itself), we have $\sigma \subseteq \bigcup_{\tau \subseteq \sigma}A_\tau.$

\item {\em ($(k,n)$-sparse KKMS cover.)} Let $k\le n$ be positive integers, and 
let $P$ be a $(k-1)$-dimensional polytope. A family of sets $\{A^i_\sigma \mid i\in [n],~ \sigma \in F(P)\},$ all closed or all open, is called a {\em $(k,n)$-sparse KKMS cover} of $P$ if for every $I\in \binom{[n]}{n-k+1}$, the family $\{\bigcup_{i\in I}A^i_\sigma \mid \sigma \text{ a non-empty face of }\}$ is a KKMS cover of $P$.
\end{itemize}
\end{definition}

Note that a KKM cover of  $\Delta_{k-1}$  is a KKMS cover of  $\Delta_{k-1}$  where $A_\sigma=\emptyset$ whenever $\dim\sigma \ge 1$. Moreover, if $k=n$ then a $(k,n)$-sparse KKMS cover is a KKMS cover.

In 1929, Knaster, Kuratowski, and Mazurkiewicz \cite{kkm} introduced the notion of KKM covers, and 
proved the following:

\begin{theorem}[The KKM Theorem; Knaster, Kuratowski, and Mazurkiewicz 1929  \cite{kkm}]\label{thm:kkm}
If  $(A_1,\dots,A_{k})$ is a KKM cover of the $(k-1)$-simplex $\Delta_{k-1}$ 
then $\bigcap_{i=1}^{k} A_i \neq \emptyset$. 
\end{theorem}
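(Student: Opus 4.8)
The plan is to deduce the theorem from \textbf{Sperner's lemma}, treating the closed-cover case first and then reducing the open-cover case to it. Label the vertices of $\Delta_{k-1}$ as $v_1,\dots,v_k$, so that each nonempty face $\sigma$ is identified with the set $\{i : v_i \in \sigma\}$, and each point $x\in\Delta_{k-1}$ has a unique \emph{carrier}, the minimal face $\sigma(x)$ containing it. First I would fix a sequence of triangulations $T_n$ of $\Delta_{k-1}$ whose mesh tends to $0$. For each vertex $w$ of $T_n$, the KKM covering condition applied to its carrier $\sigma(w)$ gives $w \in \sigma(w) \subseteq \bigcup_{i \in \sigma(w)} A_i$, so I may choose a label $\ell(w)=i$ with $i\in\sigma(w)$ and $w\in A_i$. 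Since a vertex lying on a face receives a label belonging to that face, $\ell$ is a legitimate Sperner labeling.

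For the closed case, Sperner's lemma \cite{sperner} then yields, for each $n$, a ``rainbow'' cell of $T_n$ whose vertices $w_1^n,\dots,w_k^n$ carry all $k$ labels; after relabeling, $w_i^n \in A_i$ for each $i$. These cells lie in the compact set $\Delta_{k-1}$, so passing to a subsequence I may assume $w_1^n \to x$; because the mesh tends to $0$ the diameter of the rainbow cell tends to $0$, forcing $w_i^n \to x$ for every $i$. As each $A_i$ is closed and contains the convergent sequence $(w_i^n)_n$, the limit satisfies $x\in A_i$ for all $i$, whence $x \in \bigcap_{i=1}^k A_i$.

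To handle an open KKM cover $\{A_i\}$, I would reduce to the closed case by shrinking. For $\epsilon>0$ set $B_i^\epsilon = \{x\in\Delta_{k-1} : \mathrm{dist}(x, \Delta_{k-1}\setminus A_i)\ge \epsilon\}$, which is closed and contained in $A_i$, and which increases to $A_i$ as $\epsilon\downarrow 0$. Fixing a face $\sigma$, the sets $\sigma\setminus\bigcup_{i\in\sigma}B_i^\epsilon$ form a nested family of compact sets whose intersection over all $\epsilon>0$ is $\sigma\setminus\bigcup_{i\in\sigma}A_i=\emptyset$ by the covering condition; by the finite intersection property this family is empty for some $\epsilon_\sigma>0$, i.e.\ $\sigma\subseteq\bigcup_{i\in\sigma}B_i^{\epsilon_\sigma}$. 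Taking $\epsilon$ to be the minimum of $\epsilon_\sigma$ over the finitely many faces produces a closed KKM cover $\{B_i^\epsilon\}$; the closed case gives a point in $\bigcap_i B_i^\epsilon \subseteq \bigcap_i A_i$.

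The combinatorial heart of the argument is Sperner's lemma, which I am assuming; given it, the only genuine subtlety is the open-cover case. The main obstacle there is that the limiting argument used for closed sets breaks down --- the rainbow vertices lie in the open sets $A_i$ but their common limit could escape to the boundary --- which is exactly why the $\epsilon$-shrinking step, and the verification that it preserves the covering condition on every face simultaneously, is needed.
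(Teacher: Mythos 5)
The paper itself contains no proof of Theorem \ref{thm:kkm}: it cites the result as classical and only remarks that it ``is equivalent to Brouwer's fixed point theorem and Sperner's lemma.'' Your proposal supplies exactly the canonical derivation that remark alludes to, and your closed-cover argument is correct and complete: the carrier-based labeling is a valid Sperner labeling precisely because any face of $\Delta_{k-1}$ containing a vertex $w$ of $T_n$ contains its carrier $\sigma(w)$, the rainbow cells from Sperner's lemma give points $w_i^n\in A_i$, and the mesh-to-zero compactness argument together with closedness of the $A_i$ yields a common point. Your decision to treat the open case by $\epsilon$-shrinking to a closed KKM cover is also the standard and right move, and your observation about why a direct limiting argument fails for open sets is accurate.

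There is, however, one technical misstep in the shrinking step: the sets $\sigma\setminus\bigcup_{i\in\sigma}B_i^\epsilon$ are \emph{not} compact. Since each $B_i^\epsilon$ is closed, the finite union $\bigcup_{i\in\sigma}B_i^\epsilon$ is closed, so its complement in $\sigma$ is relatively \emph{open}; and a nested family of nonempty open sets can have empty intersection (e.g.\ $(0,1/n)$ in $[0,1]$), so the finite intersection property cannot be invoked as written. The repair is one line: work instead with the closed supersets $C_\sigma^\epsilon=\{x\in\sigma : \mathrm{dist}(x,\Delta_{k-1}\setminus A_i)\le\epsilon \text{ for all } i\in\sigma\}$, which are compact and nested in $\epsilon$, and whose intersection over all $\epsilon>0$ is $\sigma\setminus\bigcup_{i\in\sigma}A_i=\emptyset$ (here you use that each $\Delta_{k-1}\setminus A_i$ is closed, so distance zero to it means $x\notin A_i$). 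Compactness then gives $C_\sigma^{\epsilon_\sigma}=\emptyset$ for some $\epsilon_\sigma>0$, and since $\sigma\setminus\bigcup_{i\in\sigma}B_i^{\epsilon_\sigma}\subseteq C_\sigma^{\epsilon_\sigma}$, the covering condition $\sigma\subseteq\bigcup_{i\in\sigma}B_i^{\epsilon_\sigma}$ follows; taking the minimum over the finitely many faces completes your reduction as intended. (One further pedantic point: when $A_i=\Delta_{k-1}$ you should adopt the convention $\mathrm{dist}(x,\emptyset)=+\infty$ so that $B_i^\epsilon=\Delta_{k-1}$.) With this local fix your proof is correct.
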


 Theorem \ref{thm:kkm} has numerous proofs, and it is equivalent to  Brouwer's fixed point theorem and Sperner's lemma, in the sense that all these theorems can be easily proved one from the other. 
 The KKM theorem has inspired many extensions and variants. An important extension
is the colorful  KKM theorem due to Gale~\cite{Gale} that deals with $k$ possibly distinct KKM covers of $\Delta_{k-1}$. % of the $k$-simplex (which we think of as being colored by $k+1$  distinct colors). 
 Let $S_n$ be the group of permutations of the elements in $[n]=\{1,2,\dots,n\}$.

 \begin{theorem}[The Colorful KKM Theorem; Gale 1982 \cite{Gale}]\label{colkkm} 
     For every $j \in [k]$ let $\{A^j_1,\dots,A^j_{k}\}$ be a KKM cover of $\Delta_{k-1}$. Then there exists a permutation  $\pi \in S_{k}$ such that $\bigcap_{i=1}^{k} A_{\pi(i)}^i \neq \emptyset$.
 \end{theorem}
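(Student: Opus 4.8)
The plan is to prove Theorem \ref{colkkm} by adapting the combinatorial (Sperner-type) proof of the plain KKM theorem to the colorful setting, producing the desired point as a limit over finer and finer triangulations. First I would reduce to the case where all of the covers are closed. This is standard: the open and closed versions of KKM-type statements are equivalent by a compactness/approximation argument, and the colorful version is no different, since one can apply the closed case to suitable closed approximations and then extract a limit point using compactness of $\Delta_{k-1}$ together with the finiteness of $S_k$ (so that the approximating permutations may be assumed constant along a subsequence).

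For the closed case, fix a sequence of triangulations $T_n$ of $\Delta_{k-1}$ whose mesh tends to $0$ and which respect the face structure of $\Delta_{k-1}$. The key observation is that each color induces a Sperner-type labeling of the vertices. For a vertex $v$ of $T_n$ let $\sigma(v)=\supp(v)$ be the minimal face containing $v$; applying the KKM covering condition to the $j$-th cover at the face $\sigma(v)$ produces an index $\lambda_j(v)\in\sigma(v)$ with $v\in A^j_{\lambda_j(v)}$. Thus for every vertex and every color we obtain a label lying in the support of the vertex, exactly as in the monochromatic Sperner setup, but now with $k$ independent labelings $\lambda_1,\dots,\lambda_k$. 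What I then need is a rainbow fully-labeled simplex: a cell $\{v_0,\dots,v_{k-1}\}$ of $T_n$ together with a bijection assigning colors $c_0,\dots,c_{k-1}$ to its vertices so that the labels $\lambda_{c_0}(v_0),\dots,\lambda_{c_{k-1}}(v_{k-1})$ are all distinct, i.e. form a permutation of $[k]$.

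Granting such a rainbow simplex for each $n$, the proof concludes by compactness. Each rainbow simplex records a permutation $\pi_n$ of $[k]$ (its labels) together with a color assignment, and since there are only finitely many such combinatorial types, a subsequence has constant type while the cells shrink to a single point $x$. For this limiting type each vertex sequence $v_t^n$ converges to $x$ with $v_t^n\in A^{c_t}_{\pi(t)}$, so closedness gives $x\in A^{c_t}_{\pi(t)}$ for every $t$. Since $t\mapsto c_t$ is a bijection onto the color set $[k]$ and $t\mapsto\pi(t)$ is a permutation, reindexing by the color $c_t$ yields a permutation $\pi'\in S_k$ with $x\in\bigcap_{i=1}^{k}A^i_{\pi'(i)}$, which is precisely the desired conclusion.

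The main obstacle is establishing the rainbow fully-labeled simplex, a colorful generalization of Sperner's lemma (essentially Bapat's lemma). Unlike the one-color case, one cannot simply run the usual boundary-induction parity count color by color; instead one works with the permanent of the $k\times k$ label-incidence matrix attached to each cell and shows, by an induction on dimension combined with a parity/degree computation on the triangulated boundary, that the weighted total count of rainbow cells is odd, hence nonzero. Setting up this counting invariant and verifying the boundary recursion is, I expect, the technical heart of the argument. An alternative would be to bypass triangulations entirely and give Gale's original degree-theoretic proof, perturbing the covers to a generic configuration and computing a $\bmod\,2$ degree; but the Sperner route keeps the argument elementary and self-contained, and parallels the combinatorial proof of Theorem \ref{thm:kkm} itself.
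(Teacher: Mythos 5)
The survey states Theorem \ref{colkkm} without proof, citing Gale, so there is no in-paper argument to compare against; measured against Gale's original treatment (which arises in the setting of exchange economies and is different in flavor), your route is a genuinely distinct, combinatorial one: reduce to a rainbow (permutation-labeled) generalization of Sperner's lemma --- this is exactly Bapat's 1989 theorem --- and pass to a limit over triangulations of vanishing mesh, in parallel with the classical Sperner-to-KKM derivation of Theorem \ref{thm:kkm}. The routine parts of your plan are all sound. The open/closed reduction is standard, and in fact cleaner than you describe: an open KKM cover always contains a closed KKM cover, so applying the closed case to the shrunken covers already places the intersection point inside the open sets, with no limiting over permutations needed. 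Applying the KKM condition of the $j$-th cover to the minimal face $\supp(v)$ does produce a Sperner labeling $\lambda_j$ for each color, and your compactness step --- passing to a subsequence of constant combinatorial type $(c_t,\pi(t))$, using closedness to get $x\in A^{c_t}_{\pi(t)}$ for all $t$, and reindexing via the bijection $t\mapsto c_t$ --- correctly yields a permutation $\pi'$ with $x\in\bigcap_{i=1}^{k}A^i_{\pi'(i)}$.

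The one real caveat is the rainbow Sperner lemma itself, which you rightly identify as the heart of the matter but do not prove. Your sketch --- that a permanent-weighted count of rainbow cells is odd --- should not be presented as routine: the naive color-by-color parity argument indeed fails, and the known proofs of Bapat's theorem are either constructive (complementary pivoting) or degree-theoretic with signed rather than mod-$2$ counts, so the precise invariant and its boundary recursion would need to be set up with care before the claimed oddness can be asserted. Since Bapat's theorem is a published result matching exactly what you need, citing it closes your argument completely; but you should be aware that this lemma is essentially equivalent to Theorem \ref{colkkm} itself, so a self-contained treatment of that step carries the full difficulty of the theorem rather than being a technical afterthought.
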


Another well-known generalization is the 
KKMS theorem of Shapley~\cite{shapley}. 
 We say that faces $\sigma_1,\dots,\sigma_{m}$ of $\Delta_{k-1}$  are {\em balanced} if the barycenter of $\Delta_{k-1}$ lies in the convex hull of the barycenters of $\sigma_1,\dots,\sigma_{m}$.
 
 \begin{theorem}[The KKMS theorem, Shapley 1973 \cite{shapley}] \label{kkms}
 Let $\{A_\sigma \mid \sigma \in F(\Delta_{k-1}) \}$ be a KKMS cover of $\Delta_{k-1}$. 
Then there exist  balanced  faces $\sigma_1, \dots, \sigma_{k}$, such that
$\bigcap_{i =1}^{k} A_{\sigma_i}\neq \emptyset.$
\end{theorem}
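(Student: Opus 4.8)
The plan is to prove the closed case first and then obtain the open case by a routine approximation: given an open KKMS cover, replace each $A_\sigma$ by a nested sequence of closed subsets exhausting it, apply the closed case to each member of the sequence, and extract a convergent subsequence of the resulting common points using compactness of $\Delta_{k-1}$; the limit lies in the required intersection. So assume from now on that all $A_\sigma$ are closed. Write $c=(1/k,\dots,1/k)$ for the barycenter of $\Delta_{k-1}$, and for a face $\sigma$ let $b_\sigma$ denote its barycenter.

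The key reduction is the following claim: it suffices to produce a single point $x^\ast\in\Delta_{k-1}$ such that $c\in\conv\{\,b_\sigma\mid x^\ast\in A_\sigma\,\}$. Indeed, suppose such an $x^\ast$ exists and set $N=\{\sigma\mid x^\ast\in A_\sigma\}$. Since $\Delta_{k-1}$ has dimension $k-1$, Carath\'eodory's theorem expresses $c$ as a convex combination of at most $k$ of the points $\{b_\sigma\mid\sigma\in N\}$; padding with repetitions if necessary, we obtain faces $\sigma_1,\dots,\sigma_k\in N$ whose barycenters have $c$ in their convex hull. By definition these faces are balanced, and $x^\ast\in A_{\sigma_i}$ for every $i$, so $x^\ast\in\bigcap_{i=1}^{k}A_{\sigma_i}$, which is exactly the desired conclusion.

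It remains to find such an $x^\ast$, and this is the heart of the matter. First observe that $N(x)=\{\sigma\mid x\in A_\sigma\}$ is always nonempty: applying the KKMS covering condition to $\mathrm{carr}(x)$, the smallest face of $\Delta_{k-1}$ containing $x$, yields some $\sigma\subseteq\mathrm{carr}(x)$ with $x\in A_\sigma$. The plan is then to establish the claim by a degree/parity (Sperner-type) argument. Take a sequence of triangulations of $\Delta_{k-1}$ with mesh tending to $0$; to each vertex $v$ assign a label $\sigma(v)$ with $v\in A_{\sigma(v)}$ and $\sigma(v)\subseteq\mathrm{carr}(v)$, which exists by the preceding observation. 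One shows, by a parity count generalizing Sperner's lemma, that every sufficiently fine triangulation contains a simplex whose vertex labels form a balanced family; equivalently, the piecewise-linear map sending $v\mapsto b_{\sigma(v)}$ takes the value $c$ inside some small simplex. Passing to a convergent subsequence of these shrinking simplices, and using that the finitely many sets $A_\sigma$ are closed, produces a limit point $x^\ast$ with $c\in\conv\{\,b_\sigma\mid x^\ast\in A_\sigma\,\}$, completing the proof.

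I expect the main obstacle to be exactly this last combinatorial step: showing that a balanced-labelled simplex cannot be avoided. Ordinary Sperner's lemma guarantees a fully labelled simplex, but here ``fully labelled'' must be replaced by ``balanced,'' and the subtle point is controlling labels along the lower-dimensional faces, where the carrier condition $\sigma(v)\subseteq\mathrm{carr}(v)$ is precisely what forces the boundary behaviour needed to make the relevant degree nonzero. An alternative route that avoids an explicit triangulation is to argue by contradiction with Brouwer's fixed point theorem: if no $x$ satisfied $c\in\conv\{\,b_\sigma\mid x\in A_\sigma\,\}$, then separating hyperplanes would furnish a continuous nonvanishing direction field on $\Delta_{k-1}$ pointing ``away from $c$,'' whose existence a no-retraction argument forbids. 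The same boundary bookkeeping reappears as the central difficulty along this route as well.
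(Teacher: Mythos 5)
The paper itself gives no proof of Theorem \ref{kkms} --- it is quoted from Shapley, with Komiya's simple proof cited elsewhere --- so the comparison is against the standard arguments, and your architecture matches them: labelling the vertices of a fine triangulation by faces $\sigma(v)\subseteq\mathrm{carr}(v)$ with $v\in A_{\sigma(v)}$, finding a small simplex whose labels form a balanced family, shrinking the mesh and using closedness, and invoking Carath\'eodory in the $(k-1)$-dimensional affine hull to cut down to $k$ faces (all correct, including the count: at most $(k-1)+1=k$ barycenters). The one genuine gap is the step you flag yourself: you never actually prove that a balanced-labelled simplex exists, deferring it to an unspecified ``parity count.'' But your own observation about the carrier condition closes this gap without any new Sperner-type lemma. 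Since $\sigma(v)\subseteq\mathrm{carr}(v)$, the piecewise-linear map $f$ determined by $f(v)=b_{\sigma(v)}$ maps every face of $\Delta_{k-1}$ into itself; hence the straight-line homotopy $H_t=(1-t)\,\mathrm{id}+tf$ also preserves every face, so it keeps $\partial\Delta_{k-1}$ inside $\partial\Delta_{k-1}$ and in particular avoids the interior point $c$. If $c\notin f(\Delta_{k-1})$, composing $f$ with radial projection from $c$ onto $\partial\Delta_{k-1}$ would give a map $\Delta_{k-1}\to\partial\Delta_{k-1}$ whose restriction to the boundary is homotopic to the identity, contradicting the no-retraction theorem. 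So $f$ hits $c$ in \emph{every} triangulation with this labelling, which is exactly your balanced-labelled simplex; your first route (a bespoke degree/parity count over triangulations) is the hard version of this and is unnecessary.

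There is also a small but real slip in your reduction of the open case to the closed case: the limit of the common points $x_n$ produced by the closed approximants need not lie in the open sets $A_\sigma$, since open sets are not closed under limits, so ``the limit lies in the required intersection'' fails as stated. Fortunately no limit is needed. A Lebesgue-number argument on each of the finitely many faces shows that for a single sufficiently large $n$ the closed shrinkings $B^{(n)}_\sigma=\{x:\dist(x,\Delta_{k-1}\setminus A_\sigma)\geq 1/n\}$ already form a closed KKMS cover, and one application of the closed case to that cover yields balanced faces $\sigma_1,\dots,\sigma_k$ and a point of $\bigcap_{i=1}^{k}B^{(n)}_{\sigma_i}\subseteq\bigcap_{i=1}^{k}A_{\sigma_i}$ outright. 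With these two repairs --- the degree argument made explicit via face-preservation, and the open-case extraction replaced by a single shrinking --- your proposal is a complete and correct proof, essentially Komiya-style.
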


Note that    it is not longer true that all the sets in a KKMS cover intersect. Instead, Theorem \ref{kkms} ensures that at least $k$ sets intersect, and that the faces corresponding to those sets have a ``nice" structure (they are balanced).

Further generalizations
of the KKMS theorem are a polytopal version due to Komiya~\cite{Komiya} and a colorful KKMS theorem of Shih and Lee~\cite{ShihLee}.
In \cite{FZ} a colorful polytopal KKMS theorem was proved, extending all results above: %For a polytope $P$ containing 0 and a face $\sigma$ of $P$, let $C_\sigma$ be the cone of $\sigma$, emanating from 0.
\begin{theorem}[Frick-Zerbib 2019 \cite{FZ}]
\label{thm:col-komiya}
	Let $P$ be a $(k-1)$-dimensional polytope  and let $p\in P$. 
 Suppose that for every $i\in [k]$, $\{A^i_\sigma \mid \sigma \in F(P) \}$ is a  KKMS cover of $P$. 
 Suppose further that for every $\sigma\in P$ we are given
 $k$ points
	$y^{1}_\sigma, \dots, y^{k}_\sigma \in \sigma$.
	Then there exist faces $\sigma_1, \dots, \sigma_{k}$ of $P$ such that
	$p \in \conv\{y_{\sigma_1}^{1}, \dots, y_{\sigma_{k}}^{k}\}$ and $\bigcap_{i=1}^{k} A^{i}_{\sigma_i}\neq \emptyset.$
\end{theorem}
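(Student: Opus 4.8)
The plan is to argue by a topological fixed-point argument, reducing the statement ultimately to Brouwer's theorem (equivalently, to a Sperner-type combinatorial lemma), in the same spirit as the proof of Theorem~\ref{thm:kkm}. It suffices to treat closed covers: if the $A^i_\sigma$ are open, the shrinking lemma yields, for each color $i$ separately, closed sets $C^i_\sigma\subseteq A^i_\sigma$ that still form a KKMS cover of $P$, and since there are only finitely many tuples $(\sigma_1,\dots,\sigma_k)$ of faces, any conclusion proved for the $C^i_\sigma$ transfers verbatim to the $A^i_\sigma$ because $C^i_{\sigma_i}\subseteq A^i_{\sigma_i}$. Henceforth I assume all $A^i_\sigma$ are closed.

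To build the relevant map it helps first to look at open covers, where one may take a partition of unity $\{\lambda^i_\sigma\}_{\sigma\in F(P)}$ subordinate to $\{A^i_\sigma\}$ (legitimate since $P$ is itself a face, so $\{A^i_\sigma\}$ covers $P$) and form the continuous selection $g_i(x)=\sum_\sigma \lambda^i_\sigma(x)\,y^i_\sigma\in P$ for each color $i$; this lands in $P$ because $y^i_\sigma\in\sigma\subseteq P$ and $P$ is convex. The only way the KKMS covering condition will be used is the following, exactly as the covering condition enters the distance-function proof of Theorem~\ref{thm:kkm}: at a point $x$ in the relative interior of a face $\tau$, applying the condition to $\tau$ guarantees an active face $\sigma\subseteq\tau$ (one with $x\in A^i_\sigma$), the boundary behaviour that prevents a fixed-point map from escaping $P$ along $\partial P$. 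The aim is to locate a single $x^\ast\in P$ together with, for each color $i$, an active face $\sigma_i$, so that $x^\ast\in\bigcap_i A^i_{\sigma_i}$ and $p\in\conv\{y^1_{\sigma_1},\dots,y^k_{\sigma_k}\}$.

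The delicate point, and the step I expect to be the main obstacle, is achieving both conclusions simultaneously: a \emph{common} intersection point together with a \emph{rainbow} family of faces, one per color, whose chosen points capture $p$. It is tempting to find $(x^\ast,t^\ast)\in P\times\Delta_{k-1}$ with $\sum_i t^\ast_i\,g_i(x^\ast)=p$ by a Brouwer argument and then thin each color's contribution to a single face, but this fails: $g_i(x^\ast)$ is an \emph{average} of several active points and in general is not reproducible by any single $y^i_\sigma$, so the average $\sum_i t^\ast_i g_i(x^\ast)=p$ need not survive the thinning to one face per color. To force a vertex-like selection I would instead pass to the combinatorial model, fixing a sequence of triangulations of $P$ with mesh tending to $0$, labelling each vertex by a pair $(\text{color},\text{active face})$ according to which closed $A^i_\sigma$ contains it (which sidesteps the partition-of-unity issue in the closed case), and proving a colorful, $p$-balanced Sperner lemma guaranteeing a simplex of the triangulation receiving a full rainbow of labels $(1,\sigma_1),\dots,(k,\sigma_k)$ with $p\in\conv\{y^i_{\sigma_i}\}$. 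This lemma is the technical heart; it is established by a degree or parity count (equivalently, by Brouwer applied to a map assembled from the $g_i$ and the colorful weights $t$), and it is really a colorful, polytopal, ``$p$-balanced'' refinement of the mechanism behind Theorems~\ref{colkkm} and~\ref{kkms}. Finally, letting the mesh tend to $0$, the vertices of the rainbow simplices converge to a common point $x^\ast$; since there are finitely many labels we may pass to a subsequence along which $\sigma_1,\dots,\sigma_k$ are constant, and closedness of the $A^i_\sigma$ gives $x^\ast\in\bigcap_i A^i_{\sigma_i}$ while $p\in\conv\{y^i_{\sigma_i}\}$ persists in the limit, completing the proof.
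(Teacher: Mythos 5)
Your overall architecture (reduce open/closed to one another, identify the rainbow-selection difficulty, discretize, take a limit) is reasonable, and you correctly diagnose the central obstacle: a partition-of-unity map $g_i(x)=\sum_\sigma\lambda^i_\sigma(x)y^i_\sigma$ produces an \emph{average} over several active faces per color, which cannot in general be thinned to a single face per color. But your proposed resolution is a genuine gap, not a proof. The ``colorful, $p$-balanced Sperner lemma'' that you call the technical heart is only asserted; it is, upon inspection, a verbatim discretization of the theorem itself, so the argument is circular. You give no boundary-labelling condition on $\partial P$ (which any degree or parity count would require, and which is exactly where the KKMS covering condition would have to be converted into combinatorics --- nontrivial for a general polytope, whose faces need not be simplices), no description of the chain or map whose degree is computed, and your parenthetical fallback --- ``Brouwer applied to a map assembled from the $g_i$ and the colorful weights $t$'' --- is precisely the averaging approach you had just (correctly) ruled out. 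Note also that even the uncolored KKMS theorem admits no simple Sperner-type parity proof: Shapley's original argument and its descendants go through Scarf-type or fixed-point machinery, so there is no reason to expect the colorful polytopal version to fall to ``a degree or parity count'' without substantial new work. (Your open$\leftrightarrow$closed reduction and the final limit argument are fine; the gap is entirely the unproved lemma.)

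The missing idea --- and the one used in the cited proof of Frick and Zerbib, which this survey quotes without reproving --- is B\'ar\'any's colorful Carath\'eodory theorem, which solves your thinning problem in one stroke. Set $Y_i(x)=\conv\{y^i_\sigma \mid x\in A^i_\sigma\}$ for each color $i$. If some $x^\ast$ satisfies $p\in Y_i(x^\ast)$ for \emph{every} $i\in[k]$, then since the points live in the $(k-1)$-dimensional affine hull of $P$, colorful Carath\'eodory extracts one face $\sigma_i$ per color with $x^\ast\in A^i_{\sigma_i}$ and $p\in\conv\{y^1_{\sigma_1},\dots,y^k_{\sigma_k}\}$, which is the full conclusion. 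Otherwise, for every $x$ some color $j$ has $p\notin Y_j(x)$; one separates $p$ from $Y_j(x)$ by a hyperplane, patches the locally constant separating vectors into a continuous field $v(x)$ via a partition of unity subordinate to the open sets $\{x \mid p\notin Y_j(x)\}$, and applies Brouwer to $x\mapsto \pi_P\bigl(x+v(x)\bigr)$, where $\pi_P$ is the nearest-point projection (this is Komiya's mechanism). At a fixed point $x^\ast$, lying in the relative interior of a face $\tau$, one gets $\langle v(x^\ast),z-x^\ast\rangle=0$ for all $z\in\tau$ and $\langle v(x^\ast),p-x^\ast\rangle\ge 0$; the KKMS condition applied to $\tau$ supplies, for each contributing color $j$, an active face $\sigma_j\subseteq\tau$ with $y^j_{\sigma_j}\in\tau$, and combining these inequalities contradicts the separation. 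This is where the KKMS condition genuinely enters --- not as a boundary condition for a triangulation count, but through the normal-cone geometry of the fixed point. I recommend you either learn and adapt this route, or else state and actually prove your discrete lemma; as written, the proposal defers the entire difficulty to it.
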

Here the points $y^i_\sigma$ replace the barycenters in the KKMS theorem.
Recently, Sober\'on \cite{Soberon} proved a ``sparse" generalization of the colorful KKM theorem, where instead of $k$ KKM covers, one considers $n$ families of subsets of $\Delta_{k-1}$ such that the union of any $n-k+1$ of them forms a KKM cover. In \cite{MZ2}, a polytopal generalization of this result was proved:

 \begin{theorem}[McGinnis-Zerbib 2024 \cite{MZ2}]
\label{thm:sparse-komiya}
 Let $n\ge k\ge 2$ be integers. Let $P$ be a $(k-1)$-dimensional polytope with $p\in P$. Assume that for every  $\sigma\in F(P)$, we are given $n$ points $y_\sigma^1,\dots,y_\sigma^n\in \sigma$. If the family  $\{A^i_\sigma \mid i\in [n], \sigma \in F(P)\}$  forms a $(k,n)$-sparse KKMS cover of $P$, then there exists an injection $\pi: [k]\rightarrow [n]$ and faces $\sigma_{1},\dots,\sigma_{k}$ such that $p\in  \conv\{y_{\sigma_{1}}^{\pi(1)},\dots,y_{\sigma_{k}}^{\pi(k)}\}$ and $\bigcap_{i=1}^k A^{\pi(i)}_{\sigma_{i}}\neq \emptyset$.
 \end{theorem}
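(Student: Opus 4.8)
The plan is to mirror the proof of the colorful polytopal KKMS theorem (\tref{thm:col-komiya}) while extracting the injection $\pi$ from the sparseness hypothesis, rather than to invoke \tref{thm:col-komiya} as a black box. A naive reduction---replacing the $n$ families by the $k$ merged covers $B^j_\sigma=\bigcup_{i\in[n]}A^i_\sigma$ and applying \tref{thm:col-komiya}---fails on two counts: it cannot guarantee that the $k$ colors realized at the common point are distinct, and it cannot produce the \emph{color-dependent} convex-hull condition $p\in\conv\{y^{\pi(1)}_{\sigma_1},\dots,y^{\pi(k)}_{\sigma_k}\}$, since the points fed to \tref{thm:col-komiya} must be fixed before the realizing colors are known. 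Both defects have to be repaired simultaneously, which is exactly why a unified argument is needed.

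First I would record the combinatorial content of sparseness. For $x\in P$ let $\tau_x$ be its carrier (the minimal face containing $x$) and set $C(x)=\{i\in[n]: x\in A^i_\rho \text{ for some } \rho\subseteq \tau_x\}$. Applying the KKMS covering condition to each merged family shows that $C(x)$ meets every $I\in\binom{[n]}{n-k+1}$, whence $|C(x)|\ge k$ for every $x$. Thus sparseness is equivalent to the statement that every point is covered, within its carrier, by at least $k$ distinct colors---this surplus of available colors is the resource from which the injection will be built.

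Next I would set up the usual topological engine. After reducing the open case to the closed one by the standard shrinking-and-compactness argument, I would pass to a fine triangulation $T$ of $P$ and, at each vertex $v$ of $T$, use $C(v)$ to assign a label consisting of a color $i\in C(v)$ together with a witnessing face $\rho\subseteq\tau_v$ with $v\in A^i_\rho$. Using partitions of unity subordinate to the cover together with the prescribed points $y^i_\sigma$, I would build a test map and apply a degree/Brouwer argument (equivalently, a balanced polytopal Sperner lemma in the spirit of Shapley and Komiya) to locate a single small simplex of $T$ whose labels yield faces $\sigma_1,\dots,\sigma_k$ that are balanced with respect to the $y$-points and sets that meet a common small neighborhood. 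A compactness argument over a sequence of refinements then produces faces $\sigma_1,\dots,\sigma_k$, colors $c_1,\dots,c_k$, a point $q\in\bigcap_j A^{c_j}_{\sigma_j}$, and the relation $p\in\conv\{y^{c_1}_{\sigma_1},\dots,y^{c_k}_{\sigma_k}\}$.

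The main obstacle is forcing the realized colors $c_1,\dots,c_k$ to be \emph{distinct}, i.e.\ upgrading the selection to an injection $\pi$, while keeping the balancedness conclusion intact. This is precisely where the surplus $|C(v)|\ge k$ must be spent: the labeling has to be made color-injective on each winning simplex, which I would arrange either through a rainbow (Ky Fan--type) refinement of the Sperner lemma or through a transversality/general-position count that rules out color coincidences on the zero set of the test map. Making the color-distinctness and the convex-hull condition emerge from one and the same fixed point---rather than from two separate arguments that might select different configurations---is the delicate point, and I expect it to be the crux of the proof; the closed-to-open reduction and the partition-of-unity bookkeeping are routine by comparison.
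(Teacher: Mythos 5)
A preliminary remark: this survey only \emph{states} \tref{thm:sparse-komiya} and quotes it from \cite{MZ2}; no proof of it appears in the paper. So your proposal must be judged on its own merits, and on those merits it has a genuine gap. Your preliminary observations are correct: the $(k,n)$-sparseness hypothesis is indeed equivalent to the pointwise condition that every $x\in P$ lies, within its carrier $\tau_x$, in sets of at least $k$ distinct colors (if $|C(x)|\le k-1$, the complement $[n]\setminus C(x)$ contains some $I\in\binom{[n]}{n-k+1}$, and the KKMS condition for $\{\bigcup_{i\in I}A^i_\sigma\}$ fails at $x$), and your diagnosis of why the naive merged-cover reduction to \tref{thm:col-komiya} cannot work is also right. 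But the proposal stops exactly where the theorem begins. The whole content of the result is the mechanism converting the pointwise surplus $|C(x)|\ge k$ into $k$ \emph{distinct} colors realized at one point \emph{simultaneously} with the balanced condition $p\in\conv\{y^{\pi(1)}_{\sigma_1},\dots,y^{\pi(k)}_{\sigma_k}\}$; for this you offer only the names of candidate tools (``a rainbow Ky Fan--type refinement of Sperner'' or ``a transversality/general-position count'') and you concede you do not know how to execute them. That is a plan, not a proof.

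Moreover, neither named tool works as stated. Rainbow or permutation Sperner lemmas require each of the $n$ color classes to supply a label at every vertex of the triangulation; in the sparse setting a single color need not cover anything, the set $C(v)$ of available colors varies from vertex to vertex, and a rainbow lemma over varying row sets is essentially the statement you are trying to prove, so invoking it is circular. The transversality suggestion fails for a structural reason: a color repetition on the winning simplex is a robust, codimension-zero phenomenon, not a degenerate coincidence that perturbation can remove --- on an entire region of $P$ only $k$ colors may be available, and the face-labels can force the same color at two vertices of every small simplex there, no matter how the test map is perturbed. There is also a secondary soft spot: your one-label-per-vertex scheme must additionally deliver the Komiya-type convex-hull condition with \emph{color-dependent} points $y^{c_j}_{\sigma_j}$, and labeling arguments for polytopal (non-simplicial) Komiya-type statements are already delicate even in the non-sparse case, where the known proofs (Komiya, and Frick--Zerbib for \tref{thm:col-komiya} \cite{FZ}) use partition-of-unity fixed-point arguments rather than triangulation labels. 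The actual proof in \cite{MZ2}, like Sober\'on's for the simplex case \cite{Soberon}, rests on a genuinely new device for extracting the injection from the sparse hypothesis rather than on an off-the-shelf rainbow lemma --- and that device is precisely the ingredient your outline leaves unspecified.
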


 In the case $k=n$ we get back Theorem \ref{thm:col-komiya}.
\medskip

The KKM theorem has also a dual variant:

\begin{theorem}[Dual KKM theorem, Sperner \cite{sperner}]\label{dualkkm}
Let $A_1, \ldots ,A_{k}$ be subsets of $\Delta_{k-1}$ that are all closed or all open. Suppose that
\begin{enumerate}
    \item[(a)] $\bigcup_{j=1}^{k} A_j = \Delta_{k-1}$, and
    \item[(b)] for all $x =(x_1,\ldots,x_k) \in \Delta_{k-1}$ and all $j \in [k]$, if $x_j=0$ then $x \in A_j$.
\end{enumerate}
Then $\bigcap_{j=1}^{k} A_j \ne \emptyset$.
\end{theorem}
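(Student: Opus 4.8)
The plan is to deduce the dual statement from the ordinary KKM theorem (\tref{thm:kkm}) by passing to complements, so that hypothesis (b) gets converted into the KKM covering condition. First I would argue by contradiction: suppose $\bigcap_{j=1}^{k}A_j=\emptyset$, and set $B_j:=\Delta_{k-1}\setminus A_j$ for each $j\in[k]$. Since the $A_j$ are all closed or all open, the $B_j$ are correspondingly all open or all closed, so the family $\{B_1,\dots,B_k\}$ is of the form required by \tref{thm:kkm}. Note that the contradiction hypothesis $\bigcap_j A_j=\emptyset$ is exactly the statement $\bigcup_j B_j=\Delta_{k-1}$, which is what will let the $B_j$ cover.

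The key step is to check that $\{B_1,\dots,B_k\}$ is a KKM cover of $\Delta_{k-1}$, and this is precisely where hypothesis (b) enters. Fix a face $\sigma$ and a point $x=(x_1,\dots,x_k)\in\sigma$, so that $\supp(x)\subseteq\sigma$. Because $\bigcup_j B_j=\Delta_{k-1}$, there is some index $j_0$ with $x\in B_{j_0}$, i.e.\ $x\notin A_{j_0}$. If we had $x_{j_0}=0$, then (b) would force $x\in A_{j_0}$, a contradiction; hence $x_{j_0}>0$, that is, $j_0\in\supp(x)\subseteq\sigma$. Thus every $x\in\sigma$ lies in some $B_{j_0}$ with $j_0\in\sigma$, which is exactly the covering condition $\sigma\subseteq\bigcup_{j\in\sigma}B_j$. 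So $\{B_j\}$ satisfies the KKM covering condition for every face.

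Applying \tref{thm:kkm} to $\{B_1,\dots,B_k\}$ then yields a point $x^{\ast}\in\bigcap_{j=1}^{k}B_j$. By definition of the $B_j$ this means $x^{\ast}\notin A_j$ for every $j$, so $x^{\ast}\notin\bigcup_j A_j$; but this contradicts hypothesis (a), which asserts $\bigcup_j A_j=\Delta_{k-1}$. Therefore the assumption $\bigcap_j A_j=\emptyset$ is untenable and $\bigcap_{j=1}^{k}A_j\neq\emptyset$, as claimed.

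I expect no genuinely hard analytic obstacle here: the entire content is the observation that (b) is exactly what upgrades ``$x$ is covered by the complements'' to ``$x$ is covered by a complement whose index lies in $\supp(x)$,'' which is the KKM condition. The only points needing care are bookkeeping ones, namely that taking complements interchanges the open and closed cases while keeping the family uniformly open or uniformly closed (so that \tref{thm:kkm} still applies), and that the reduction is naturally phrased as a proof by contradiction, since the complements cover $\Delta_{k-1}$ if and only if $\bigcap_j A_j=\emptyset$.
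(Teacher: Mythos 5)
Your proof is correct. The paper states Theorem \ref{dualkkm} without proof (it only cites Sperner), so there is no in-paper argument to compare against; your complementation argument is the standard derivation from Theorem \ref{thm:kkm}: assuming $\bigcap_j A_j=\emptyset$, De Morgan makes the complements $B_j=\Delta_{k-1}\setminus A_j$ cover $\Delta_{k-1}$, hypothesis (b) is exactly what forces each covering index to lie in $\supp(x)\subseteq\sigma$ (giving the KKM condition), and the resulting point of $\bigcap_j B_j$ contradicts (a). You also correctly handle the one bookkeeping issue, namely that complementation swaps closed and open, which is harmless since Theorem \ref{thm:kkm} covers both cases.
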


This dual version was further generalized to product of simplices in \cite{AHZ, NSZ}. 

Finally, KKM-type theorems ``with volume" have also been studied.
Extending a theorem proved 
in \cite{VDPRV} for the unit cube, one obtains the following:

\begin{theorem}[Vander Woude-Zerbib 2023+]\label{thm:VZ}
    Let $P=\Pi_{i=1}^d \Delta_{k_i}$ and let  $\mathcal{C}$ be a KKMS cover of $P$. Then there exists a constant $c$ such that for any $\varepsilon \in (0,1/2]$
there exists a point $p \in P$
such that the $\varepsilon$-ball around $p$ intersects at least $(1 + c\varepsilon)^d$ many sets in $\mathcal{C}$.
\end{theorem}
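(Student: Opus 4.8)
\textbf{The plan} is to trade the combinatorial ``number of sets met'' for a measure inequality, prove the resulting bound in product form, and recover the point $p$ by averaging. Since $P$ is fixed, all norms on it are equivalent and I would work with the box ($\ell^\infty$) metric, so that a ball factors as $B(x,\varepsilon)=\prod_{i=1}^d B_i(x_i,\varepsilon)$; replacing the box norm by the Euclidean one only rescales the final constant $c$. For $x\in P$ set $N(x)=|\{\sigma\in F(P)\mid A_\sigma\cap B(x,\varepsilon)\neq\emptyset\}|$ and write $A_\sigma^{+\varepsilon}$ for the $\varepsilon$-neighborhood of $A_\sigma$ in $P$. Since $\int_P \mathbf 1[A_\sigma\cap B(x,\varepsilon)\neq\emptyset]\,d\mu(x)=\mu(A_\sigma^{+\varepsilon})$, we get $\max_{x\in P}N(x)\ge \tfrac{1}{\mu(P)}\int_P N\,d\mu=\tfrac{1}{\mu(P)}\sum_{\sigma\in F(P)}\mu(A_\sigma^{+\varepsilon})$, so it suffices to prove the neighborhood inequality
\[
\sum_{\sigma\in F(P)}\mu\bigl(A_\sigma^{+\varepsilon}\bigr)\ \ge\ (1+c\varepsilon)^d\,\mu(P).
\]

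The product form of the target is explained by the case of a \emph{product cover}, which I would treat first as motivation. If $A_{\tau_1\times\cdots\times\tau_d}=\prod_i A^{(i)}_{\tau_i}$ with each $\{A^{(i)}_{\tau_i}\}$ a KKMS cover of $\Delta_{k_i}$, then box balls make neighborhoods factor, and $\sum_{\sigma}\mu(A_\sigma^{+\varepsilon})=\prod_{i=1}^d\bigl(\sum_{\tau_i}\mu_i\bigl((A^{(i)}_{\tau_i})^{+\varepsilon}\bigr)\bigr)$. Thus everything reduces to a \emph{single--factor gain}: for one simplex $\Delta_k$ with a KKMS cover one wants $\sum_{\tau}\mu\bigl((A_\tau)^{+\varepsilon}\bigr)\ge(1+c\varepsilon)\,\mu(\Delta_k)$, after which the $d$ factors multiply to $(1+c\varepsilon)^d$. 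The mechanism of this gain is that the sets already cover $\Delta_k$ (so their measures sum to at least $\mu(\Delta_k)$), while the covering condition on the vertex faces forces $v\in A_{\{v\}}$ for every vertex; two distinct non-empty members must therefore meet (by \tref{kkms}, or directly since a connected set has no partition into two non-empty relatively clopen pieces), and the doubly-counted overlap of their $\varepsilon$-neighborhoods contributes the extra $c\varepsilon\,\mu(\Delta_k)$.

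The delicate point, already inside this single factor when $k\ge 2$, is that the extra mass must be \emph{linear} in $\varepsilon$: a bare point-overlap would only yield a gain of order $\varepsilon^{k}$. So one must produce a \emph{codimension-one} interface of positive $(k{-}1)$-content between two members, whose $\varepsilon$-neighborhood is a slab of measure $\gtrsim\varepsilon\,\mu(\Delta_k)$. This is where the full KKMS structure—the covering condition imposed on \emph{every} sub-face together with the balancedness conclusion of \tref{kkms}—is used, rather than the cheap interval argument of the cube, and is exactly the step that must be upgraded from $\Delta_1$ to $\Delta_k$ relative to \cite{VDPRV}.

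\emph{The main obstacle} is passing from product covers to arbitrary KKMS covers of $P$, since a general member $A_{\tau_1\times\cdots\times\tau_d}$ couples all coordinates and the cover does not factor. Here I would induct on the number of factors $d$, writing $P=\Delta_{k_1}\times Q$ with $Q=\prod_{i=2}^d\Delta_{k_i}$ and $\mu=\mu_1\times\mu_Q$, and slicing over the first coordinate: for each fixed $x_1$ the family $\mathcal{C}$ must be restricted to a suitable induced cover of $Q$ that still satisfies the KKMS covering condition, so that the inductive bound $(1+c\varepsilon)^{d-1}\mu(Q)$ applies on the fibers, while integration in $x_1$ supplies the remaining linear factor $(1+c\varepsilon)$ from the first-coordinate interface. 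The crux is to define this fiberwise cover (tracking, for each $x_1$, the faces $\tau_1$ with $x_1\in\tau_1$ and how the coupled sets $A_{\tau_1\times\rho}$ restrict) so that the covering condition is genuinely inherited and the per-coordinate gains \emph{multiply} rather than merely add. I expect this to be the hardest part, because a naive Steiner/surface-area estimate of $\sum_\sigma\mu(A_\sigma^{+\varepsilon})$ only gives $1+O(d\varepsilon)$, which matches $(1+c\varepsilon)^d$ to first order but is strictly weaker across the full range $\varepsilon\in(0,1/2]$; securing the true product form is precisely the content of the cube argument of \cite{VDPRV} re-run in the simplex setting. Once the neighborhood inequality is established, the averaging bound of the first paragraph returns a point $p$ whose $\varepsilon$-ball meets at least $(1+c\varepsilon)^d$ sets.
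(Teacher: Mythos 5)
A preliminary remark: the paper states Theorem~\ref{thm:VZ} without proof (it is attributed to unpublished work extending the cube case of \cite{VDPRV}), so there is no in-paper argument to compare against, and your proposal must stand on its own. Its reductions are fine as far as they go: the averaging identity $\max_{x\in P} N(x)\ge \mu(P)^{-1}\sum_\sigma \mu(A_\sigma^{+\varepsilon}\cap P)$ is correct, the $\ell^\infty$ metric does make neighborhoods of product sets factor, and for product covers the inequality does reduce to a single-simplex gain. But both steps that carry the actual content of the theorem are left as acknowledged placeholders. The first is the single-factor gain. Since $\sum_\sigma \mathbf{1}_{A_\sigma^{+\varepsilon}}=N\ge 1$ pointwise, your one-simplex inequality is equivalent to $\mu(\{N\ge 2\})\ge c\varepsilon\,\mu(\Delta_k)$, i.e., to the doubly-covered region having a neighborhood of measure \emph{linear} in $\varepsilon$. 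You correctly note that your connectedness argument (or Theorem~\ref{kkms}) only yields a common \emph{point}, worth $\varepsilon^{k}$, and then assert that ``the full KKMS structure'' forces an interface of positive $(k-1)$-content. That assertion is the theorem, not a step toward it: the balanced-faces conclusion of the KKMS theorem produces a common point of $k$ sets, not a separating hypersurface of quantified $(k-1)$-dimensional content. One can argue positivity of $\mathcal{H}^{k-1}$ of the union of pairwise intersections by a separation argument (a set with $\mathcal{H}^{k-1}$ measure zero has topological dimension at most $k-2$ and cannot disconnect the simplex, forcing one set to be all of $\Delta_k$ and pushing the interfaces onto the boundary faces, where one recurses), but positivity for each fixed cover is far from the \emph{uniform} constant $c$ the statement requires, and no compactness or quantitative mechanism is offered to bridge that gap.

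The second gap is the multiplicative induction. The easy half actually works and you could have nailed it down: for fixed $x_1$ with minimal face $\sigma(x_1)$ of $\Delta_{k_1}$, the sets $B_\rho(x_1)=\{y\in Q \mid (x_1,y)\in \bigcup_{\tau\subseteq\sigma(x_1)} A_{\tau\times\rho}\}$, indexed by $\rho\in F(Q)$, form a KKMS cover of the fiber $Q$, directly from the KKMS condition for $P$ applied to the faces $\sigma(x_1)\times\rho$. The hard half is what you concede you cannot do: the fiber bound counts the \emph{merged} indices $\rho$, so distinct sets $A_{\tau\times\rho}$ and $A_{\tau'\times\rho}$ met by a fiber ball are counted once, and, more seriously, the per-coordinate gains must \emph{multiply}, i.e., the doubly-covered mass inside the $Q$-fibers must itself be doubly covered in the $x_1$-direction on a set of proportional measure. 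Your own estimate yields only $1+O(d\varepsilon)$, and since the entire point of the statement across the full range $\varepsilon\in(0,1/2]$ is the exponential product form $(1+c\varepsilon)^d$ versus the linear $1+O(d\varepsilon)$, the proposal as written proves only the trivial regime. In short: a sensible roadmap that correctly locates the two difficulties, but both the codimension-one interface estimate and the transversality of gains across coordinates are missing ideas, not technical details, so this is not yet a proof.
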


\begin{problem}
    Prove a colorful version of Theorem \ref{thm:VZ}.
\end{problem}

\subsection{Applying  KKM-type theorems}\label{sec:method}

The topological method utilizing  KKM-type theorems can be described as follows: 
the configuration space of the possible solutions to the problem (e.g., possible 
piercing sets/partitions of goods/mass partitions)
 is  modeled by a polytope $P$. If no ``good" solution is found in the set of  possible solutions $P$, then one obtains a KKM cover, or more generally a KKMS  cover, of  $P$.  The conclusion of the KKM-type theorem, namely that a large enough collection of the sets in the cover intersect, is then translated to a contradiction to the given properties of the hypergraph/goods/mass in question. 
\medskip
    
Let us demonstrate the method by proving a well-known theorem of Gallai. 
Let $\F$ be a family of sets (a hypergraph). A {\em matching} in $\F$ is a subfamily of pairwise disjoint sets. The {\em matching number} $\nu(\F)$ is the largest size of a matching in $\F$. A {\em piercing set} for $\F$ is a set intersecting every set in $\F$. The {\em piercing number} $\tau(\F)$ is the minimum size of a piercing set in $\F$. Clearly, $\nu(\F)\le \tau(\F)$ for every family $\F$. 

The following theorem can be easily proved in an elementary fashion. However, to demonstrate the KKM method, we give a proof that uses the KKM theorem.

\begin{theorem}[Gallai 1959 \cite{gallai}]\label{gallai}
    Let $\F$ be a finite family of compact intervals in $\R$. Then $\tau(\F)=\nu(\F)$. 
\end{theorem}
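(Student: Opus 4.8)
The inequality $\nu(\F)\le\tau(\F)$ is immediate, so the plan is to prove $\tau(\F)\le\nu(\F)$ by contradiction using Theorem~\ref{thm:kkm}. Write $\nu=\nu(\F)$, and after translating and rescaling assume every interval of $\F$ lies in $[0,1]$. I will model a placement of $\nu$ piercing points by the simplex $\Delta_\nu$ (vertices $e_0,\dots,e_\nu$, so $k=\nu+1$): to a point $t=(t_0,\dots,t_\nu)\in\Delta_\nu$ I associate the cut points $p_j=\sum_{i<j}t_i$ for $j=1,\dots,\nu$, which partition $[0,1]$ into $\nu+1$ open gaps $G_0,\dots,G_\nu$, where $G_i=(p_i,p_{i+1})$ (with $p_0=0$ and $p_{\nu+1}=1$) has length $t_i$. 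A point $p_j$ pierces exactly the intervals containing it, so the configuration $\{p_1,\dots,p_\nu\}$ leaves an interval unpierced precisely when that interval avoids all cut points, i.e.\ is contained in the interior of a single gap $G_i$.

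Suppose for contradiction that no $\nu$ points pierce $\F$. For each $i\in\{0,\dots,\nu\}$ define $A_i=\{t\in\Delta_\nu:\ \text{some } I\in\F \text{ is contained in } G_i\}$. Since each endpoint $p_i(t)$ depends continuously on $t$, since containment of a compact interval in an open gap is an open condition, and since $\F$ is finite, each $A_i$ is open. Because no configuration pierces all of $\F$, every $t$ leaves some interval unpierced, hence inside some gap; thus $\bigcup_{i=0}^\nu A_i=\Delta_\nu$.

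The key step is to check that $\{A_0,\dots,A_\nu\}$ satisfies the KKM covering condition. A face $\sigma$ of $\Delta_\nu$ corresponds to a set $S\subseteq\{0,\dots,\nu\}$ and consists of those $t$ with $\supp(t)\subseteq S$. If such a $t$ lies in $A_i$, then $G_i$ contains an interval and so is nonempty, forcing $t_i>0$ and hence $i\in\supp(t)\subseteq S$; combined with $\bigcup_i A_i=\Delta_\nu$ this gives $\sigma\subseteq\bigcup_{i\in S}A_i$, as required. Applying Theorem~\ref{thm:kkm} (open version) yields a configuration $t^\ast$ lying in every $A_i$, i.e.\ each of the $\nu+1$ gaps $G_0,\dots,G_\nu$ of $t^\ast$ contains an interval of $\F$. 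Distinct gaps are disjoint open intervals, so these $\nu+1$ intervals are pairwise disjoint, giving a matching of size $\nu+1>\nu(\F)$ — a contradiction. Hence $\tau(\F)\le\nu(\F)$, and equality follows.

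The main thing to get right is the covering condition, not any hard estimate: the whole argument hinges on the observation that a gap of length $0$ has empty interior and therefore contains no interval, which is exactly what forces $t_i>0$ whenever $t\in A_i$ and thereby delivers the face condition. The remaining points — the open/closed bookkeeping (an unpierced interval sits strictly inside a gap, making each $A_i$ open), the reduction to $[0,1]$, and the harmless treatment of degenerate one-point intervals (a point is unpierced iff it lies strictly inside a gap, so the same reasoning applies) — are routine.
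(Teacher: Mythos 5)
Your proposal is correct and follows essentially the same route as the paper: the same simplex model of cut points, the same open sets $A_i$ (an interval captured strictly inside an open gap), the same verification of the KKM covering condition via the observation that a zero-length gap is empty, and the same extraction of a matching from the common point guaranteed by Theorem~\ref{thm:kkm}; your framing by contradiction from $\nu$ rather than directly from $\tau=k$ is only a cosmetic difference. One small fix: rescale the intervals into the \emph{open} segment $(0,1)$, as the paper does, rather than $[0,1]$ --- with your normalization an interval having $0$ or $1$ as an endpoint can avoid all cut points yet lie in no open gap $G_i$, which would break the claim $\bigcup_i A_i=\Delta_\nu$.
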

               \begin{proof}
            Let $\F$ be a family of intervals with $\tau(\F)=k$. We will show  $\nu(F) \geq k$, which entails the theorem.
      Since $\F$ is finite and the intervals in it are compact, by rescaling $\R$ we may assume that all the intervals in $F$ are contained in the open segment $(0,1)$.
\medskip

{\bf Step 1: Modeling the configuration space of possible piercing sets.}
       We  use the  $(k-1)$-simplex  $\Delta_{k-1}$ to model the configuration space of all possible piercing sets of size $k-1$ or less.  Let $\Delta_{k-1}$ be embedded in $\R^{k}$ in the standard way:
             $\Delta_{k-1} = \{(x_1,\dots,x_{k}) \mid x_i\ge 0~ \text{for all }i, \sum x_i=1\}$. 
            Every point $x=(x_1,\dots,x_{k}) \in \Delta_{k-1}$ corresponds to the (piercing) set $P(x)=\{u_1(x),\dots,u_{k-1}(x)\}$ containing  $k-1$ (not necessarily distinct) points, where $u_i(x)= \sum_{j=1}^i x_j \in [0,1]$.

\medskip

{\bf Step 2: Obtaining a KKM cover.} The assumption  $\tau(\F)=k$ implies that 
              $k-1$ or less points do not pierce $\F$, and thus for every $x\in \Delta_{k-1}$, $P(x)$ does not pierce $\F$.  Therefore,  there exists an interval $F\in \F$ that does not contain any of the points in $P(x)$. This implies $F \subset (u_{i-1}(x), u_i(x))$ for some $1 \leq i \leq k$, where we take $u_0(x)=0$ and $u_{k}(x) = 1$ for all $x$.
             
            Define sets $A_1,\dots,A_{k} \subseteq \Delta_{k-1}$ as  follows: $$A_i=\{x\in \Delta_{k-1}\mid \text{~there exists~} F\in \F \text{~such that~} F \subset (u_{i-1}(x),u_i(x))\}.$$ 
            Note that $\Delta_{k-1} \subseteq \bigcup_{i=1}^{k} A_i$.
             
    \begin{claim}
         The family $\{A_1, \dots , A_{k}\}$ forms a KKM cover of $\Delta_{k-1}$.
    \end{claim}
    \begin{proof}
            First,  the sets $A_i$ are open for all $i$ because the intervals in $\F$ are closed. %Indeed, if $F\in \F$ witnesses the fact the $x\in A_i$ (that is $F \subset (u_{i-1}(x),u_i(x))$), then since $F$ is closed, for some small enough $\varepsilon$, every point $x'$ in an $\varepsilon$-neighborhood of $x$ satisfies $F \subset (u_{i-1}(x'),u_i(x'))$, showing $x'\in A_i$.
            We want to show that the KKM covering condition holds. Let $\sigma$ be a face of $\Delta_{k-1}$ and let $x\in \sigma$. If $i\notin \sigma$ then $x_i=0$, and thus $(u_{i-1}(x),u_i(x))=\emptyset$, showing that no $F\in \F$ satisfies  $F \subset (u_{i-1}(x),u_i(x))$. Therefore $x\notin \bigcup_{i\notin \sigma} A_i$. Since $x \in \bigcup_{i=1}^{k} A_i = \big(\bigcup_{i\in \sigma} A_i\big) \cup \big(\bigcup_{i\notin \sigma} A_i\big),$  we must have $x\in \bigcup_{i\in \sigma} A_i$, entailing $\sigma \subseteq \bigcup_{i\in \sigma} A_i$.
 \end{proof}

\medskip

{\bf Step 3: Applying the KKM theorem.}
 By the KKM theorem there exists  $x \in \bigcap_{i=1}^{k}A_i$. Thus for every $i\in [k]$,
since $x\in A_i$,  there exists an interval $F_i \in (u_{i-1}(x),u_i(x))$. Now the set $\{F_1,\dots, F_{k} \}$  is a matching of size $k$, showing $\nu(\F)\ge k$ as desired.
        \end{proof}
        
One of the perks of this proof strategy, is that the KKM theorem can be regarded as a ``black box": one can replace it with a more sophisticated KKM-type theorem, to obtain a corresponding extension of the  result. For example, replacing the KKM theorem with the colorful KKM theorem (Theorem \ref{colkkm}) in the above proof, gives the following colorful extension of Gallai's theorem:  

\begin{theorem}\label{colgallai}
    Let $\F_1,\dots, \F_{k}$ be finite families of compact intervals in $\R$, with $\tau(\F_i) \ge k$ for all $i$. Then there exists a colorful matching, namely a choice of intervals $F_i \in \F_i$, $i=1,\dots,k$, such that $F_1,\dots, F_{k}$ are pairwise disjoint.
    \end{theorem}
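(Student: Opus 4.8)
The plan is to run the same three-step argument used for Gallai's theorem (Theorem \ref{gallai}), but to build one KKM cover per color and then feed all $k$ covers into the Colorful KKM Theorem (Theorem \ref{colkkm}) in place of the plain KKM theorem. As before, after rescaling $\R$ we may assume every interval in every $\F_i$ lies in the open segment $(0,1)$, and we model size-$(k-1)$ piercing sets by points of $\Delta_{k-1}$: a point $x=(x_1,\dots,x_k)$ gives the cut points $u_\ell(x)=\sum_{j=1}^\ell x_j$, with $u_0(x)=0$ and $u_k(x)=1$, which carve $(0,1)$ into the $k$ pairwise-disjoint gaps $(u_{\ell-1}(x),u_\ell(x))$, $\ell\in[k]$.

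For each color $i\in[k]$ I would define, exactly as in the monochromatic proof,
\[
A^i_\ell=\{x\in\Delta_{k-1}\mid \text{there exists }F\in\F_i\text{ with }F\subset(u_{\ell-1}(x),u_\ell(x))\}.
\]
The hypothesis $\tau(\F_i)\ge k$ means no set of $k-1$ points pierces $\F_i$, so for every $x$ some interval of $\F_i$ falls entirely inside one of the $k$ gaps; hence $\Delta_{k-1}=\bigcup_{\ell=1}^k A^i_\ell$. The verification that $\{A^i_1,\dots,A^i_k\}$ is a KKM cover is verbatim the Claim in the proof of Theorem \ref{gallai}: the sets are open because the intervals are closed, and if $\ell\notin\sigma$ and $x\in\sigma$ then $x_\ell=0$ forces $(u_{\ell-1}(x),u_\ell(x))=\emptyset$, so $x\notin A^i_\ell$; combined with $x\in\bigcup_\ell A^i_\ell$ this yields the covering condition $\sigma\subseteq\bigcup_{\ell\in\sigma}A^i_\ell$.

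Applying Theorem \ref{colkkm} to the $k$ covers $\{A^i_1,\dots,A^i_k\}$, $i\in[k]$, produces a permutation $\pi\in S_k$ and a point $x\in\bigcap_{i=1}^k A^i_{\pi(i)}$. Unwinding the definitions, for each $i$ there is an interval $F_i\in\F_i$ with $F_i\subset(u_{\pi(i)-1}(x),u_{\pi(i)}(x))$. Since $\pi$ is a bijection, the indices $\pi(1),\dots,\pi(k)$ are distinct, so the chosen intervals lie in $k$ distinct gaps of the fixed dissection determined by $x$; as these gaps are pairwise disjoint, $F_1,\dots,F_k$ are pairwise disjoint, giving the desired colorful matching.

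There is essentially no hard step here beyond one conceptual point: the colorful KKM theorem hands back a permutation rather than a common point of a single cover, and it is precisely the bijectivity of $\pi$ that guarantees the $k$ selected intervals occupy distinct, hence disjoint, gaps. Everything else---the rescaling, the openness of the $A^i_\ell$, and the covering condition---is identical to the monochromatic case, so the only thing to double-check is that the reindexing matches the hypothesis of Theorem \ref{colkkm}, with color $i$ playing the role of the superscript and the gap index $\ell$ playing the role of the subscript.
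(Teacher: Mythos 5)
Your proposal is correct and is exactly the argument the paper intends: the paper derives Theorem \ref{colgallai} by stating that one replaces the KKM theorem with the colorful KKM theorem (Theorem \ref{colkkm}) in the proof of Theorem \ref{gallai}, which is precisely what you carry out, including the correct matching of superscripts (colors) and subscripts (gap indices) to the hypothesis of Theorem \ref{colkkm} and the observation that bijectivity of $\pi$ places the selected intervals in distinct, pairwise-disjoint gaps.
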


    Replacing the KKM theorem in the above proof with the KKMS theorem (Theorem \ref{kkms}) or its polytopal extension due to Komiya \cite{Komiya}, yields the Tardos-Kaiser theorem on piercing $d$-intervals (this proof was first discovered  in \cite{AKZ}). Moreover,  replacing the KKM theorem with Theorem \ref{thm:col-komiya} we obtain a colorful extension of the Tardos-Kaiser result, which was proved by Frick and Zerbib in \cite{FZ} (and of which Theorem \ref{colgallai} is a special case). 
    
    Note that in the  proof,  the large matching for Gallai's theorem was obtained from the fact that the sets $A_i$ corresponding to the vertices of $\Delta_{k-1}$ intersect, together with the fact that the vertices are pairwise disjoint faces of $\Delta_{k-1}$. Since the KKMS theorem and its polytopal extension do not guarantee that sets $A_\sigma$ corresponding to pairwise disjoint faces $\sigma$ intersect, one has to apply some further arguments from matching theory after the application of the KKM-type theorem, to obtain the $d$-interval results. That is, in the KKM method there could be also a fourth step: 
    \medskip
    
    {\bf Step 4: Using matching theory to obtain a large collection of pairwise disjoint faces $\sigma$ whose corresponding sets $A_\sigma$ intersect}.
 \medskip
 
One downside of the KKM method is the following: it seems to work well when the objects we wish to pierce/partition  with (points, lines, hyperplane) are one dimension less then the dimension of the sets or mass in question. One main challenge for further research  is to find ways to apply this  approach in other cases. Two successful attempts can be found in \cite{Z2} and \cite{mcginnis}, where variants of this method where used to bound the piercing numbers of special families of sets in $\R^2$.  
    
The next sections are devoted to applications of the KKM method.

\section{Piercing numbers}
Let $\F$ be a family of sets in $\mathbb{R}^d$. The \textit{piercing number}  $\tau(\F)$ of $\F$ is the size of the smallest set of points in $\mathbb{R}^d$ that intersects every set in $\F$. In a typical piercing problem one wants to find bounds on $\tau(\F)$ for families $\F$  containing sets of a certain type (e.g. convex sets, disks, boxes, etc.),  that have certain given intersection properties. One widely-studied example of an intersection condition that may be imposed on $\F$ is the \textit{$(p,q)$ property}, which is the condition that in every $p$ sets of $\F$, some $q$ sets have a common intersection. A special case of the $(p,q)$ property is the $(p,2)$ property, which is equivalent to the family having matching number $\nu\le p-1$. Thus in many piercing problem one seeks to bound the ratio $\tau/\nu$ for certain families of sets.  
Other type of piercing problems deal with piercing families of sets with lines, hyperplanes, or in general affine subspaces. 

The KKM method has proven  useful in certain piercing problems. In particular, it has been successful to solve problems involving families of sets in $\R$ (see \cite{AKZ, AHZ, Z1, FZ, MZ2}) or line piercing problems in $\R^2$ (\cite{MZ1, Z2}) . In rare occasions the method was pushed forward to bound the (point-)piercing numbers  of sets in $\R^2$ (see \cite{mcginnis, Z2}). It seems hard to generalize the method to higher dimensions. The challenge is twofold:
\begin{itemize}
    \item Modeling the configuration space of all piercing sets (Step 1 in Section \ref{sec:method}) using a polytope that is ``simple enough", so that the KKM-type theorem would give  ``many" sets in the cover that have a common  intersection. 
    \item When piercing with affine spaces of co-dimension at least 2 (in particular, when piercing sets in $\R^d$ with points, for $d\ge 2$), the conclusion of the KKM-type theorem does not provide a partition of the space (as is given in Step 3 in Section \ref{sec:method}), which makes it hard to conclude the existence of a large matching in the family. This problem does not occur when piercing with hyperplanes.
\end{itemize}

Piercing problems is an immense area of research that deserves a survey of its own. We mention here two excellent surveys on related areas: an old survey of Eckhoff on the $(p,q)$-problem \cite{Eckhoff}, and a survey of B\'ar\'any and Kalai on Helly-type problems \cite{barany2022Helly}.  In this manuscript  we  discuss only those  piercing problems that were solved using  the KKM method, or  open problems that may be amenable to the method.

\subsection{Piercing convex sets}
Bounding the piercing numbers of families of convex sets satisfying the $(p, q)$ property
is widely-studied area of research in discrete geometry. It was initiated by a classical theorem of Helly \cite{helly}, asserting that if $\F$ is a family of convex sets in $\R^d$ satisfying the $(d+1,d+1)$ property (that is, every $d+1$ sets in $\F$ intersect) has $\tau(\F)=1$.    
Hadwiger and Debrunner \cite{HD}   proved in 1957 that whenever $p \ge q \ge d + 1$ and
$p(d - 1) < d(q - 1)$, any family of compact, convex sets in $\R^d$ has $\tau \le 
p - q + 1$. They further conjectured  that for every  $p\ge q\ge d+1$ there exists a constant $c=c(p,q;d)$ such that any family of compact, convex sets in $\R^d$ with the $(p,q)$ property has $\tau\le c$. This conjecture  was proved by Alon and Kleitman in 1992 \cite{AK}, and is now commonly known as the $(p,q)$ theorem. The bounds they obtained on the constants $c=c(p,q;d)$ are far from optimal, and finding optimal bounds is known in the literature as the {\em $(p,q)$ problem} (see the survey in \cite{Eckhoff}). The best known general known upper bounds on $c(p,q;d)$ were found by  Keller,  Smorodinsky, and Tardos in \cite{KST}. 

The first case where the optimal bound is not known is $c(4,3;2)$; namely  bounding the piercing numbers of a family of convex sets in $\R^2$ satisfying the property that within any four sets some three intersect. In 2001, Kleitman, Gy\'arf\'as, and T\'oth \cite{432} showed that $3\le c(4,3;2)\leq 13$. Recently, McGinnis \cite{mcginnis} improved the upper bound, showing  $c(4,3;2)\leq 9$. His proof relies heavily on an application of the KKM method, which he used to deduce the existence of two intersecting lines such that each of the resulting quadrants contains the pairwise intersections of intersecting three sets. The remainder of the proof then follows elementary, though highly nontrivial, geometric considerations.  Here McGinnis was able to overcome the obstacle of using the KKM method when piercing sets with co-dimension 2 spaces. However, the analysis needed for the proof was  specifically tailored for the problem and thus not likely to generalize.

Another intriguing question on piercing numbers is the following  conjecture of Dol'nikov,  mentioned in \cite{CMS}:
\begin{conjecture}[Dol'nikov]
Let $K$ be a compact convex set in $\R^2$. Let $\F_1, \F_2, \F_3$ be finite families
of translates of $K$. Suppose that $A \cap B  \neq \emptyset$ for every $A \in \F_i$ and $B \in \F_j$ with $i \neq j$. Then there
exists $j \in [3]$ such that $F_j$ can be pierced by 3 points. 
\end{conjecture}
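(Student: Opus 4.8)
The plan is to run the KKM method in its colorful form, treating the three families as three colors. First I would pass to the standard reformulation for translates: writing $A=K+a$ and recording each translate by its center $a$, the hypothesis $A\cap B\neq\emptyset$ for $A\in\F_i$, $B\in\F_j$ ($i\neq j$) becomes the purely metric condition $a-b\in D$, where $D:=K-K$ is the centrally symmetric difference body; equivalently, the $D$-gauge distance between the center sets $P_i$ and $P_j$ of different colors is at most $1$. On the other side, $\tau(\F_j)\le 3$ is equivalent to covering $P_j$ by three translates of $-K$. Note that the hypothesis depends only on $D$, whereas the conclusion depends on $K$ itself; this suggests first treating the centrally symmetric case $K=\tfrac12 D$ (whose difference body is again $D$, so the intersection hypothesis is unchanged, while $-K=K=\tfrac12 D$ and the conclusion reads: $P_j$ is covered by three translates of $\tfrac12 D$), and only afterwards transferring to general $K$, which already requires care since $-K$ and $D$ differ.

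For the main argument I would set up the contrapositive and feed it into the colorful KKM theorem (\tref{colkkm}), via its one-dimensional incarnation, the colorful Gallai theorem (\tref{colgallai}). Fix a direction $\theta\in S^1$ and project all translates orthogonally onto the line spanned by $\theta$. Each projected translate is a closed interval of length equal to the width $w_K(\theta)$ of $K$ in direction $\theta$, and, crucially, since $A\cap B\neq\emptyset$ forces the projections of $A$ and $B$ to overlap, the projected intervals of different colors pairwise intersect. In particular there is no colorful matching of three pairwise-disjoint projected intervals, one from each family. By the contrapositive of \tref{colgallai} (with $k=3$), some family $\F_{j(\theta)}$ has projection pierceable by two points, i.e.\ its centers $P_{j(\theta)}$ are covered, in the $\theta$-direction, by two slabs of width $w_K(\theta)$. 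Thus every direction is witnessed by a thin family, giving a map $j:S^1\to[3]$.

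The remaining, and hardest, task is to upgrade this one-dimensional directional information into a genuine covering of one fixed family by three translates of $-K$ in the plane. Here two ingredients would be combined. First, a pigeonhole/averaging argument over directions to isolate a single family $\F_{j_0}$ that is simultaneously thin in two transverse directions, the difficulty being that different directions may be witnessed by different families; if achieved, $P_{j_0}$ lies in an intersection of a bounded number of slabs, hence in a bounded union of boxes. Second, the matching-theory/geometric step (Step~4 of the method in \sref{sec:method}) that packs this bounded region into three translates of $-K$; this is exactly where the constant $3$ must be extracted, and it is delicate because a bounding box of $-K$ is strictly larger than $-K$. I expect this reassembly to be the crux: as emphasized in \sref{sec:method}, piercing planar sets by points is a codimension-two problem, so the KKM conclusion yields overlapping regions rather than a partition of the plane, and the passage from ``thin in each direction'' to ``covered by three copies of $-K$'' does not follow formally.

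An alternative, more combinatorial attack that I would pursue in parallel proceeds by a diameter dichotomy. If some $\F_j$ is already pairwise intersecting (its centers have $D$-diameter at most $1$), one invokes the known Gallai-type bound for pairwise-intersecting translates in the plane, reducing to checking that the sharp constant in this regime is $3$. Otherwise every family contains two non-crossing members, say $u_i,v_i\in P_i$ with $\|u_i-v_i\|_D>1$; the cross-intersection hypothesis then confines each family to the intersection of the lenses $B_D(u_\ell,1)\cap B_D(v_\ell,1)$ over $\ell\neq i$, a small planar region. The conjecture would follow from showing that such a lens-intersection is always coverable by three translates of $-K$. Proving this uniform covering estimate for all convex $K$---and in particular settling the centrally symmetric case first---is, I expect, the true obstacle, and it is the geometric core that the topological machinery alone does not resolve.
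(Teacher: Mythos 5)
There is no proof in the paper to compare yours against: the statement is presented precisely as an \emph{open} conjecture of Dol'nikov, with only partial results recorded --- it is proved in \cite{CMS} when $K$ is centrally symmetric or a triangle, and in \cite{roldan2024colorful} with $4$ piercing points in place of $3$. Your submission is, by your own framing, a program rather than a proof: the two steps you yourself flag as the crux (upgrading directional thinness to a planar cover by three translates of $-K$, and the uniform lens-covering estimate) are left unproved, and each of them carries essentially the full content of the open problem.

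As for what is sound and where the genuine gaps sit: the reformulation ($A\cap B\neq\emptyset$ iff the centers satisfy $a-b\in D=K-K$; $\tau(\F_j)\le 3$ iff the center set $P_j$ is covered by three translates of $-K$) is correct, and your projection step is correct and can even be strengthened --- applying \tref{colgallai} with $k=2$ to each of the three pairs of families shows that for every direction $\theta$ at most one projected family fails to be pierceable by a single point, so at least \emph{two} of the three center sets lie in slabs of width $w_K(\theta)$. But the consolidation step genuinely fails: the witnessing family varies with $\theta$, no averaging or pigeonhole argument forces a single family to be thin in two transverse directions, and even granting that, an intersection of boundedly many slabs circumscribing $-K$ cannot in general be covered by three translates of $-K$ --- width information loses a constant factor, which is consistent with the literature reaching $4$ points \cite{roldan2024colorful} but not $3$. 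In your dichotomy route, the first branch does rest on known ground (pairwise intersecting translates of a planar convex body are $3$-pierceable, a theorem of Karasev not cited in this survey), but the second branch reduces the conjecture to the claim that every lens $B_D(u,1)\cap B_D(v,1)$ with $\|u-v\|_D>1$ is covered by three translates of $-K$; this is exactly the kind of uniform statement currently known only in the centrally symmetric and triangle cases. Indeed, your own (correct) observation that the hypothesis depends only on $D$ while the conclusion depends on $K$ explains why ``settle the symmetric case first and transfer afterwards'' is not a route: the transfer from $\tfrac12 D$ to general $K$ \emph{is} the open problem, as the state of the art in \cite{CMS} attests.
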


The conjecture was proved in \cite{CMS} for centrally
symmetric bodies $K$ or when $K$ is a triangle. Further, in \cite{roldan2024colorful} it was proved that the conjecture is true for 4 piercing points instead of 3.

\begin{theorem}[Gomez-Navarro and Rold\'an Pensado 2024 \cite{GR}]
    Let $\F_1,\dots,\F_n$ be finite families of convex sets in $\mathbb{R}^2$. Suppose that $A\cap B\neq \emptyset$ whenever $A\in \F_i$ and $B\in \F_j$ with $i\neq j$. Then one of the following holds:
    \begin{enumerate}
        \item There exists $j\in [n]$ such that $\bigcup_{i\neq j} \F_i$ can be pierced by 1 point.
        \item The family $\bigcup_{i=1}^n \F_i$ can be pierced by 2 lines.
    \end{enumerate}
\end{theorem}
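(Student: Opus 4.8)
\emph{Overview and a one--dimensional lemma.}
The plan is to reduce the two--dimensional statement to a one--dimensional piercing phenomenon by projecting onto every direction, and then to organize these projections using the connectedness of the space of directions. Write ``direction'' for a point $\theta\in\mathbb{RP}^1$, and for a convex set $K$ let $\pi_\theta(K)$ denote its (compact interval) projection onto the line spanned by $\theta$; the cross--intersecting hypothesis passes to projections, since $A\cap B\ne\emptyset$ forces $\pi_\theta(A)\cap\pi_\theta(B)\ne\emptyset$. First I would establish an interval lemma, in the spirit of Gallai's theorem (\tref{gallai}) and provable by the KKM method: if $\mathcal G_1,\dots,\mathcal G_n$ are finite pairwise cross--intersecting families of compact intervals, then at most one of them has empty intersection, and the families other than that one have a common point. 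Call $\F_i$ \emph{$\theta$--bad} if $\bigcap_{A\in\F_i}\pi_\theta(A)=\emptyset$. Applying the lemma to the projected families shows that in each direction $\theta$ at most one family is $\theta$--bad, and that a single line orthogonal to $\theta$ meets every set of every non--$\theta$--bad family. In particular, if some direction $\theta_0$ has \emph{no} bad family, then one line pierces $\bigcup_i\F_i$ and conclusion (2) holds.

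\emph{Locating a universally bad family.}
It remains to handle the case where every direction has exactly one bad family. The set $U_i=\{\theta:\F_i\text{ is }\theta\text{--bad}\}$ is open (emptiness of an intersection of continuously moving intervals is an open condition), the $U_i$ are pairwise disjoint by the lemma, and by assumption they cover $\mathbb{RP}^1$. A disjoint open cover of the connected space $\mathbb{RP}^1$ forces one $U_{j_0}$ to be all of $\mathbb{RP}^1$ and the rest to be empty. Thus a single family $\F_{j_0}$ is bad in every direction, while $\F^{*}:=\bigcup_{i\ne j_0}\F_i$ admits a transversal line in every direction. Since two disjoint sets of one family would make that family bad in a separating direction, each $\F_i$ with $i\ne j_0$ is pairwise intersecting, and together with cross--intersection this makes $\F^{*}$ a pairwise--intersecting family.

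\emph{The two remaining cases.}
If $\bigcap\F^{*}\ne\emptyset$, then a single point pierces $\bigcup_{i\ne j_0}\F_i$, and conclusion (1) holds with $j=j_0$. The crux is the complementary case: $\F_{j_0}$ is bad in every direction, so no single line meets all of it; $\F^{*}$ has no common point; yet every set of $\F_{j_0}$ meets every set of $\F^{*}$. Here I would model the configuration space of piercings by two lines as a polytope $P$ and argue by contradiction: assuming no two lines pierce $\bigcup_i\F_i$, I would build a (sparse) KKMS cover of $P$ in which the set $A_\sigma$ records, for a candidate pair of lines, a set left unpierced together with the ``side'' on which it lies. The intersection guaranteed by a KKM--type theorem (\tref{kkms}, or its colorful and sparse strengthenings \tref{thm:col-komiya} and \tref{thm:sparse-komiya}) would then exhibit sets from distinct families forced to be disjoint, contradicting cross--intersection.

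\emph{Main obstacle.}
The difficulty is entirely in the last case, and it is precisely the one flagged in Section~3: piercing with two lines is in effect a codimension--two problem, so the KKM conclusion no longer partitions the plane and does not directly yield a matching. The delicate points are (i) choosing a configuration space $P$ for two lines that is simple enough for a KKM--type theorem to apply yet rich enough to encode ``which side of which line,'' and (ii) converting the resulting intersection into a genuine violation of the cross--intersecting hypothesis. I expect (ii) to be the real bottleneck, and the natural leverage is the extra structure assembled above: the pairwise--intersecting family $\F^{*}$ and the fact that, in every direction, all of $\F_{j_0}$ meets the common transversal slab of $\F^{*}$.
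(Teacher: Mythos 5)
First, a framing note: the survey itself contains no proof of this theorem---it is quoted from \cite{GR}, with the remark that the proof uses ``the same ideas as above,'' i.e., the circle-based KKM construction of \tref{thm:t(4)}---so your proposal must be measured against that route. Your first half is correct and genuinely clean: the one-dimensional lemma holds (and is elementary---by Helly in $\R$, a finite family of compact intervals has empty intersection iff two of them are disjoint, and any interval meeting both members of a disjoint pair must contain the gap between them; no KKM input is needed), the sets $U_i$ are indeed open and pairwise disjoint, and connectedness of $\mathbb{RP}^1$ correctly yields either a direction with no bad family (whence one line pierces everything and conclusion (2) holds) or a single family $\F_{j_0}$ that is bad in every direction, with $\F^{*}=\bigcup_{i\ne j_0}\F_i$ pairwise intersecting. (All of this silently uses compactness, as \cite{GR} does; the statement as quoted omits it.) But this reduction never touches the heart of the theorem: the case you leave open is essentially the entire content.

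The plan you sketch for that final case has a structural flaw, not merely a missing computation. You propose to assume that no two lines pierce $\bigcup_i\F_i$ and then have a KKM-type theorem ``exhibit sets from distinct families forced to be disjoint, contradicting cross-intersection.'' No such contradiction can exist, because the dichotomy is genuine: even in your remaining case, conclusion (1) may hold with some $j\ne j_0$, so an argument that assumes (2) fails must be able to \emph{output} conclusion (1), not derive absurdity from cross-intersection alone. Concretely, parametrize pairs of lines by $\Delta_3$ exactly as in the proof of \tref{thm:t(4)}: a point $x$ gives a vertical chord $\ell_1(x)$, a second chord $\ell_2(x)$, and open regions $R_1(x),\dots,R_4(x)$ with $R_j(x)=\emptyset$ when $x_j=0$. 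If no two lines pierce $\bigcup_i\F_i$, then the open sets $A_j=\{x \mid R_j(x) \text{ contains some } F\in\bigcup_i\F_i\}$ form a single (uncolored) KKM cover, and \tref{thm:kkm} yields $x_0$ whose four regions contain sets $G_1,\dots,G_4$ (all four regions nonempty, so the configuration is nondegenerate and the two lines genuinely cross at a point $p$). Distinct open quadrants are disjoint, so the $G_k$ are pairwise disjoint, and cross-intersection forces \emph{all four into one family} $\F_j$---at which point your hoped-for contradiction evaporates. The missing idea is the geometric conversion to conclusion (1): any convex set meeting all four open quadrants of two crossing lines contains $p$, since any open half-plane bounded by a line through $p$ misses an entire open quadrant, so the convex hull of four witness points, one per quadrant, contains $p$. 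Hence every set of every family other than $\F_j$ contains $p$, which is precisely conclusion (1). With this observation the whole theorem follows from one KKM application (and the fixed vertical direction explains the survey's closing remark), making your direction-space analysis unnecessary, though correct; without it, your final step cannot be completed as designed.
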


In fact, using the same ideas as above, one may fix a direction $v$ such that one of the lines in the second statement can be chosen to be parallel to this direction.

% \textcolor{red}{Shira: I think the paragraphs below are redundant for this survey. They have nothing to do with KKM, and we talk about axis-parallel rectangles later. What do you think?}

% It is known that such a constant $c(p,q;d)$  does not exist when $q \le d$. For example, when $d=q=2$, a family of pairwise intersecting line segments in general position 
% in the plane has piercing number at least $|\F|/2$, which is not bounded by a constant. However, such a constant may exist if the family is further restricted. For example, Danzer \cite{Danzer} showed that a family of disks in the plane with the $(2,2)$ property has piercing number at most 4.
% Another example is 
% a well-known conjecture of Wegner from 1965 \cite{wegner}, asserting that a family of axis-parallel rectangles in the plane with the $(p+1,2)$ property has $\tau\le 2p$ points (where the best current known upper bound, proven by Correa, Feuilloley,  P\'erez-Lantero, and Soto \cite{soto} is $\tau\le O(p)(\log\log p)^2$). 

% The question of finding or improving bounds on the piercing numbers in general families of convex sets in $\R^d$ with the $(p,q)$ property for $q\ge d+1$, or in restricted families when $q$ may be smaller than $d+1$, has received great attention over the years, see e.g., \cite{Eckhoff, KST,  mcginnis} for general families, and \cite{CSZ, GZ, karasev, KT} for restricted families.

\subsection{Piercing $d$-intervals}

A \textit{$d$-interval} is a union of (not necessarily disjoint and possibly empty) compact intervals on $\mathbb{R}$.   A \textit{separated $d$-interval} is a $d$-interval whose $i$-th interval component lies in $(i,i+1)$ for every $1\leq i\leq d$.

The first application of topology to piercing problem was in a theorem by Tardos  \cite{tardos}, asserting that the piercing number of separated 2-intervals is at most twice their matching number. This extended  Gallai's theorem (Theorem \ref{gallai}) which states that the piercing number of a family of intervals is equal to its matching number. Kaiser then extended this for all $d$:

\begin{theorem}[Tardos 1995 \cite{tardos}, Kaiser 1997 \cite{kaiser}]\label{d-intervals}
If $\F$ is a family of $d$-intervals then $\tau(\F)\leq (d^2-d+1)\nu(\F)$. Further, if $\F$ is a family of separated $d$-intervals then $\tau(\F)\leq (d^2-d)\nu(\F)$.
    
\end{theorem}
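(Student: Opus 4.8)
\textbf{The plan} is to follow the KKM method exactly as laid out in the proof of Gallai's theorem (Theorem \ref{gallai}), using the KKMS theorem (Theorem \ref{kkms}) in place of the plain KKM theorem, and then to add the fourth step---a matching-theoretic argument---that converts the balanced-faces conclusion into a genuine matching. The separated structure is what makes the $d$-interval case more tractable, so I expect to treat the separated bound $\tau \le (d^2-d)\nu$ first and then reduce the general case to it.

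First I would model the configuration space. Suppose $\nu(\F) = \nu$; I want to show $\tau(\F) \le (d^2-d)\nu$. Exactly as in Gallai's proof, I would take a simplex $\Delta_{k-1}$ whose points $x$ encode candidate piercing sets $P(x)$ of $k-1$ points via the partial sums $u_i(x) = \sum_{j \le i} x_j$, where $k$ is chosen so that $k-1$ is just below the target piercing number. Each of the $d$ strips $(i,i+1)$ carries its own copy of this interval structure, so a point of the configuration polytope should record how the $k-1$ piercing points are distributed among the $d$ strips and where they sit within each strip. If no set of $k-1$ points pierces $\F$, then for every $x$ there is a $d$-interval $F \in \F$ avoiding all of $P(x)$; since $F$ is a union of $d$ components, one component in each strip is missed, and I would record, for each strip, the ``gap'' $(u_{i-1}(x), u_i(x))$ into which that component falls. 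This data defines the covering sets $A_\sigma$ indexed by faces, and one checks the KKMS covering condition just as the claim in Gallai's proof does: when a coordinate $x_j = 0$ the corresponding open gap is empty, so membership is forced onto the appropriate face.

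Next I would invoke the KKMS theorem to obtain balanced faces $\sigma_1,\dots,\sigma_k$ with a common point $x^* \in \bigcap_i A_{\sigma_i}$. Here is where the $d$-interval case genuinely departs from Gallai: in Gallai's one-dimensional setting the relevant faces are vertices, which are automatically pairwise disjoint, so the missed intervals form a matching immediately. With the KKMS theorem the faces $\sigma_i$ are only guaranteed to be balanced, not pairwise disjoint, so the $d$-intervals they produce need not be disjoint. \textbf{This is the main obstacle.} The fourth step is to run a matching-theoretic / fractional-covering argument: the balancedness of the faces gives a system of weights expressing the barycenter as a convex combination of the barycenters of the $\sigma_i$, and this linear-algebraic balance condition bounds how many of the recorded $d$-intervals any single point of $\R$ can stab. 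Since each $d$-interval has only $d$ components and lives across $d$ strips, a counting argument on the overlaps---of the type originally used by Tardos and Kaiser---shows that one can extract a matching whose size forces $k-1 \ge \tau/(d^2-d)$, yielding the bound. The factor $d^2-d$ (versus $d^2-d+1$) reflects the saving available when the components are separated into distinct strips.

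For the general (non-separated) case I would reduce to the separated one. A family of arbitrary $d$-intervals can be perturbed or ``unfolded'' so that the $d$ components of each $d$-interval are pushed into $d$ disjoint strips; this transformation can only increase the piercing number and does not decrease the matching number by more than a controlled amount, and the slightly weaker constant $d^2-d+1$ absorbs the loss incurred because, unlike in the separated model, two components of distinct $d$-intervals may coincide in the same region. I expect the delicate bookkeeping to lie entirely in Step 4---translating the combinatorial-topological output (balanced faces plus a common intersection point) into the sharp numerical constant via an extremal matching estimate---while Steps 1 through 3 are essentially a parametrized repetition of the Gallai argument across $d$ coordinates.
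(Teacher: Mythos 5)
Your Steps 1--3 follow the same route the paper sketches (the paper does not spell out a full proof; it notes that replacing the KKM theorem in the proof of Theorem \ref{gallai} by the KKMS theorem (Theorem \ref{kkms}) or Komiya's polytopal extension yields this theorem, with the details in \cite{AKZ}). But your plan has two genuine gaps. First, the proposed reduction of the general case to the separated case fails, and in fact runs backwards relative to the actual proofs. ``Unfolding'' the components of arbitrary $d$-intervals into $d$ disjoint strips does behave well for piercing (a piercing set of the unfolded family pulls back to one for $\F$), but it can blow up the matching number in an uncontrolled way: two $d$-intervals that intersect only via \emph{mismatched} components (the $i$-th component of one meeting the $j$-th component of the other, $i \neq j$) become disjoint after unfolding, so a pairwise-intersecting family with $\nu(\F)=1$ can unfold to a family with large matching number, and the bound $\tau(\F)\le(d^2-d)\nu(\text{unfolded})$ then gives nothing in terms of $\nu(\F)$. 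The two statements are proved by parallel but distinct arguments: the general case directly on the simplex $\Delta_{k-1}$ via the KKMS theorem, and the separated case on the \emph{product of $d$ simplices} (one per strip, recording how the budget of cut points is distributed) via Komiya's polytopal extension \cite{Komiya} --- a single simplex encoding, as you describe, only recovers the weaker constant even in the separated setting.

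Second, your Step 4 is the heart of the matter and is left as a handwave that cannot produce the stated constants. What balancedness actually gives is this: writing each $A_\sigma$ as the set of $x$ for which some $F\in\F$ has all components inside the gaps indexed by $\sigma$ (so $|\sigma|\le d$), the balanced faces $\sigma_1,\dots,\sigma_k$ form a hypergraph on $[k]$ of rank at most $d$ admitting a \emph{fractional perfect matching}, hence with fractional matching number at least $k/d$; and since the witnesses $F_1,\dots,F_k$ at the common point $x^*$ can only intersect when their faces share a gap index, any integral matching among the faces yields a matching in $\F$. The extraction of an integral matching of size $k/(d^2-d+1)$ from this fractional one is exactly F\"uredi's theorem that $\nu^* \le (d-1+\tfrac1d)\nu$ for hypergraphs of rank $d$, with the improved $d$-partite bound $\nu^* \le (d-1)\nu$ giving the $d^2-d$ constant in the separated case --- this is where both constants originate, and it is also why the separated case is $d$-partite in the first place (which is what the product-polytope model buys you). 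A generic ``counting argument on overlaps'' or a bound on how many recorded $d$-intervals a point can stab does not yield $d-1+\tfrac1d$; without this extremal input the argument bottoms out at the trivial $\nu \ge \nu^*/d$, i.e.\ the constant $d^2$.
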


The constant $d^2-d+1$ is known to be nearly asymptotically tight, as  Matou\v{s}ek \cite{matousek2001Lower} constructed families of $d$-intervals with  $\tau/\nu = \Omega(\frac{d^2}{\textrm{log}(d)})$. 
Tardos used algebraic topology for his proof. Kaiser's proof is an elegant application of the Borsuk-Ulam Theorem. Later Aharoni, Kaiser, and Zerbib  \cite{AKZ} found another proof of Theorem \ref{d-intervals} using the KKMS theorem and Komiya's polytopal extension of it. This proof followed the KKM method described in Section \ref{sec:method} (and it was the first time where this method was applied). 
Frick and Zerbib \cite{FZ} then used Theorem \ref{thm:col-komiya} to prove a colorful extension of Theorem \ref{d-intervals}, where the families $\F_i$ are thought to have distinct colors, and a {\em rainbow matching} is a collection $\M$ of pairwise disjoint $d$-intervals, such that  $|\M\cap \F_i| \le 1$ for all $i$. 

\begin{theorem}[Frick-Zerbib 2019 \cite{FZ}] \label{coloreddintervals}
  If $\F_i,\dots, \F_{k+1}$ are families of $d$-intervals and  $\tau(\F_i )>k$ for all $i\in[k+1]$, then there exists a rainbow matching of size $\frac{k+1}{d^2-d+1}$. 
Further, if $\F_1,\dots, \F_{kd+1}$ are families of separated $d$-intervals
and  $\tau(\F_i )>kd$ for all $i \in [k+1]$, then there exists a rainbow matching of size $\frac{k+1}{d-1}$.       
\end{theorem}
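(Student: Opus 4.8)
The plan is to run the KKM method of \sref{sec:method} in its full four-step form, upgrading Step~3 from Komiya's polytopal KKMS theorem (the engine behind the non-colorful \tref{d-intervals} in \cite{AKZ}) to the colorful polytopal KKMS theorem, \tref{thm:col-komiya}. The dimension bookkeeping already indicates this is the right instrument: the hypothesis $\tau(\F_i)>k$ says that $k$ points fail to pierce $\F_i$, which we model by a $k$-dimensional polytope $P$; and \tref{thm:col-komiya} consumes exactly $\dim P+1=k+1$ KKMS covers on such a $P$, matching the $k+1$ families at hand. For the separated statement one instead models piercing by $kd$ points on a $kd$-dimensional polytope and feeds the $kd+1$ families into the theorem, which is why the thresholds change from $k$ to $kd$.

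First I would carry out Steps~1 and~2 once per color. Concatenating the $d$ tracks into a single segment, a point $x\in P$ is parametrized (exactly as $x\mapsto P(x)$ in the Gallai proof) so that it determines $k$ piercing points $u_1(x)\le\dots\le u_k(x)$, whose induced gaps, intersected with the individual tracks, are the candidate slots into which the $d$ components of a \emph{missed} $d$-interval can fall. Since $\tau(\F_i)>k$, for every $x$ some $F\in\F_i$ is missed by $P(x)$, and assigning such an $F$ to the tuple of slots its components occupy defines, for each color $i\in[k+1]$, a family $\{A^i_\sigma \mid \sigma\in F(P)\}$, where $x\in A^i_\sigma$ precisely when $\sigma$ encodes those slots. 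The content of this step is to choose $P$ and the parametrization so that (a) the sets come out open, which holds because the intervals are compact, and (b) the KKMS covering condition holds: when a coordinate of $x$ degenerates to $0$ a slot closes up, forbidding the corresponding placement, which is exactly what forces membership in a subface's set.

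Next I would apply \tref{thm:col-komiya} with $p$ the barycenter of $P$ and each $y^i_\sigma$ the barycenter of $\sigma$, so that the convex-combination condition specializes to balancedness of the chosen faces. This returns faces $\sigma_1,\dots,\sigma_{k+1}$, a common point $x\in\bigcap_{i=1}^{k+1}A^i_{\sigma_i}$, and the relation $p\in\conv\{y^1_{\sigma_1},\dots,y^{k+1}_{\sigma_{k+1}}\}$. Reading off cover membership then yields, for each color $i$, a $d$-interval $F_i\in\F_i$ that is missed by $P(x)$ with its components sitting in the slots encoded by $\sigma_i$, while the convex-combination condition pins down how the slot patterns of the $\sigma_i$ are distributed along the concatenated line.

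The main obstacle is Step~4: extracting from the structured $d$-intervals $F_1,\dots,F_{k+1}$ a rainbow matching $\M$ (pairwise disjoint, at most one per color) of the asserted size. This is the genuinely combinatorial heart and the source of the constant $d^2-d+1$ (respectively $d^2-d$, and the refined $d-1$ in the separated regime): the balancedness relation forces the slot patterns into a balanced system across the $d$ tracks, and a counting/greedy argument bounding how many of the $F_i$ can pairwise overlap within a bounded number of slots produces a pairwise-disjoint subcollection of size roughly $(k+1)/(d^2-d+1)$. What is new relative to the non-colorful Tardos--Kaiser/AKZ extraction is that the output must honor the rainbow constraint; since the $\sigma_i$ already carry distinct color indices $i\in[k+1]$, any pairwise-disjoint subcollection is automatically rainbow, but the quantitative matching bound has to be re-derived respecting this per-color indexing. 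I expect this extraction to be the delicate part, and the step most sensitive to whether one works in the general or the separated setting.
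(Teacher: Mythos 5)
Your plan coincides with the route the survey itself describes for this theorem: model the $k$ (resp.\ $kd$) candidate piercing points by $\Delta_k$ (resp.\ by the $kd$-dimensional product $(\Delta_k)^d$, one factor per track), define for each color $i$ the open sets $A^i_\sigma$ recording which gaps the components of a missed $d$-interval occupy, verify the KKMS covering condition via degenerating coordinates, and apply \tref{thm:col-komiya} with $p$ and the $y^i_\sigma$ taken to be barycenters so that the convex-hull conclusion becomes balancedness. Your observation that the rainbow constraint comes for free — each $\sigma_i$ is paired with its own family $\F_i$, so any pairwise disjoint subcollection of the resulting $F_i\in\F_i$ is automatically rainbow — is exactly right, and in fact it means your worry that ``the quantitative matching bound has to be re-derived respecting this per-color indexing'' is unfounded: the non-colorful extraction applies verbatim.

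The genuine gap is in your Step 4, which you leave as ``a counting/greedy argument.'' A greedy extraction from a rank-$d$ hypergraph with a fractional matching of size $s$ only yields a matching of size $s/d$; since balancedness gives a perfect fractional matching of size at least $(k+1)/d$ (explicitly: if $\sum_i \lambda_i \,\mathbf{1}_{\sigma_i}/|\sigma_i| = \mathbf{1}/(k+1)$ with $\sum_i\lambda_i=1$, then the weights $w_i = (k+1)\lambda_i/|\sigma_i|$ form a perfect fractional matching of the slot hypergraph of total weight at least $(k+1)/d$), greedy delivers only $(k+1)/d^2$, short of the claimed $(k+1)/(d^2-d+1)$. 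The missing ingredient is F\"uredi's theorem bounding the fractional-to-integral matching ratio: $\nu^* \le (d-1+\tfrac{1}{d})\nu$ for hypergraphs of rank $d$, and $\nu^* \le (d-1)\nu$ in the $d$-partite case. Applied to the hypergraph whose edges are the slot patterns $\sigma_i$ (which are $d$-partite in the separated setting, one gap per track), these give matchings of size $(k+1)/(d^2-d+1)$ and $(k+1)/(d-1)$ respectively — precisely the constants in the statement — and since two of the $F_i$ can only meet inside a common open gap, disjointness of slot patterns gives disjointness of the $d$-intervals. So your architecture is the paper's, but the ``delicate part'' you flag is not a bespoke counting argument: it is the balancedness-to-fractional-matching conversion followed by a known, nontrivial theorem of F\"uredi, and without it the stated bounds are not reachable.
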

Taking all the families $\F_i$ to be the same family $\F$, we get back Theorem \ref{d-intervals}. 
 Using Theorem \ref{thm:sparse-komiya}, in \cite{MZ2} a sparse-colorful version of Theorem \ref{d-intervals} was proved. 
No non-topological proofs are known to all the above theorems.

 Theorem \ref{d-intervals} was also extended to families of $d$-intervals having  the $(p,q)$-property:
\begin{theorem}[Zerbib 2019 \cite{Z1}]\label{dpps} If a family $\F$ of separated $d$-intervals  has the $(p,p)$ property, then $\tau(\F) \le d^{\frac{p}{p-1}}$.
Moreover, if a family $\F$ of $d$-intervals has the $(p,q)$ property, then $\tau(H)\le \max\Big\{\frac{2^{\frac{1}{q-1}}(ep)^\frac{q}{q-1}}{q} d^{\frac{q}{q-1}} + d,~ 2p^2d\Big\}.$
\end{theorem}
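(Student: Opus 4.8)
The plan is to split the statement and to attack the first part (separated $d$-intervals, $(p,p)$ property) with the KKM method of \sref{sec:method}, using the \emph{dual} KKM theorem (Theorem~\ref{dualkkm}) in its product-of-simplices form from \cite{AHZ, NSZ}; the general $(p,q)$ bound I would then derive from a quantitative version of the first part together with an elementary reduction. For the modeling step I fix a number $m$ of piercing points to place in each of the $d$ segments $(i,i+1)$, so the total budget is $dm$. The target exponent $p/(p-1)$ forces the calibration $m\approx d^{1/(p-1)}$, i.e. $m^{p-1}\approx d$, which makes $dm\approx d^{p/(p-1)}$. Exactly as in the Gallai proof, I model the placements in segment $i$ by a simplex $\Delta_m$ (each of its points specifying $m$ points, hence $m+1$ gaps, in $(i,i+1)$), and take the configuration space to be the product $P=\prod_{i=1}^{d}\Delta_m$.

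Assuming for contradiction that $\tau(\F)>dm$, every $\mathbf{x}\in P$ fails to pierce $\F$, so some separated $d$-interval is missed, meaning that in each segment its component lies strictly inside one gap. For each segment $i$ and gap index $j$ I set $B^i_j=\{\mathbf{x}\in P:\ \text{gap }j\text{ of segment }i\text{ is degenerate, or contains the }i\text{th component of some }F\in\F\}$. The degeneracy clause is precisely the boundary hypothesis $x^i_j=0\Rightarrow\mathbf{x}\in B^i_j$ of the dual KKM theorem, and the ``missed interval'' observation supplies the covering hypothesis $\bigcup_j B^i_j=P$ in each factor $i$ (as in the Gallai proof, one passes to closures to make the $B^i_j$ genuinely closed). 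The product dual KKM theorem of \cite{AHZ, NSZ} then yields a single configuration $\mathbf{x}^\ast\in\bigcap_{i,j}B^i_j$; after a generic perturbation making all gaps nondegenerate, this says that in \emph{every} gap of \emph{every} segment sits a component of some $d$-interval in $\F$.

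The configuration $\mathbf{x}^\ast$ therefore exhibits, in each segment $i$, a family of $m+1$ pairwise disjoint occupied components, coming from $d$-intervals $F^{i,0},\dots,F^{i,m}\in\F$. The crux is then a counting argument: from these $d(m+1)$ $d$-intervals one must extract $p$ whose components fail to share a point in \emph{every} segment, which contradicts the $(p,p)$ property, since for separated $d$-intervals $p$ sets have a common point iff their $i$th components meet for some single $i$. The calibration $m^{p-1}\approx d$ is exactly what a double-counting/pigeonhole over the $d$ segments and the $m+1$ disjoint slots per segment should require in order to force such a $p$-tuple, giving the contradiction and hence $\tau(\F)\le dm\le d^{p/(p-1)}$.

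For the general $(p,q)$ statement I would run a quantitative form of the same scheme. The exponent $q/(q-1)$ and the constant $e$ signal that Step~4 becomes an optimization: under $(p,q)$ one can no longer extract an exactly disjoint family, so one instead bounds the number of $d$-intervals that can simultaneously avoid a common point by a Tur\'an/entropy-type estimate and optimizes the number of points per segment, with Stirling-type estimates producing $\tfrac{2^{1/(q-1)}(ep)^{q/(q-1)}}{q}\,d^{q/(q-1)}$; the passage from separated to general $d$-intervals costs the additive $+d$ (fixing $d$ extra points to normalize components into segments), while a crude greedy/fractional bound covers the complementary parameter regime and contributes the $2p^2d$ term, the stated bound being the better (hence the $\max$) of the two. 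The genuinely hard step throughout is this final combinatorial conversion: turning the purely topological conclusion ``every gap is occupied'' into a \emph{violation} of the $(p,q)$ property with the \emph{right} constants. Obtaining the exact exponents $p/(p-1)$ and $q/(q-1)$, rather than a lossy surrogate, hinges on a tight count matching the $m^{p-1}\approx d$ calibration, and the non-separated reduction, though elementary, must be performed carefully so as not to degrade these exponents.
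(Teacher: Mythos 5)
You should first note the basis of comparison: the survey does not prove Theorem~\ref{dpps} at all --- it states it with a citation to \cite{Z1} and records only that the proof there is ``a combination of the KKM method, and a generalization of an elementary method introduced by Alon \cite{alon1}.'' Measured against that, your proposal reproduces the KKM half in spirit but omits the Alon half entirely, and the step you yourself flag as the crux is not merely uncalibrated --- it is false as stated. The conclusion you extract from the product dual KKM theorem, namely that at some configuration $x^\ast$ every gap of every segment contains the $i$-th component of \emph{some} member of $\F$, cannot contradict the $(p,p)$ property in the relevant regime, because the occupier $F^{i,j}$ of gap $j$ in segment $i$ is controlled only in its home segment; its components elsewhere are arbitrary. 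Concretely, let $\F=\{F^{i,j}\}$ where $F^{i,j}$ has a tiny $i$-th component inside gap $j$ of segment $i$ and full components in every other segment. Every gap of every segment is then occupied, yet any $p$ of these sets with $p<2d$ do share a common point: at most $\lfloor p/2\rfloor<d$ segments are home to two or more of the chosen sets, and in any remaining segment the (at most one) home set's tiny component meets all the others' full components. So this family has the $(p,p)$ property while exhibiting exactly the structure your Step~4 is supposed to forbid. Since the theorem's content is for fixed $p$ and large $d$ (where $m\approx d^{1/(p-1)}$ and $p\ll 2d$), no pigeonhole or double count over the $d(m+1)$ occupiers can produce $p$ sets with no common point. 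A salvage would require defining $B^i_j$ via components of sets \emph{missed in their entirety} by the configuration --- so each occupier has all $d$ of its components confined to gaps and thus carries a ``signature'' in $[m+1]^d$ --- followed by a genuinely nontrivial combinatorial lemma about such signatures under the calibration $m^{p-1}\geq d$; none of this appears in your write-up.

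Two further problems. First, your boundary fix pollutes the topological conclusion: because $B^i_j$ includes the event ``gap $j$ is degenerate,'' the common point $x^\ast$ given by the dual KKM theorem for products (Theorem~\ref{dualkkm} in the form of \cite{AHZ,NSZ}) may witness up to $m$ of the $m+1$ memberships in a segment by degeneracy alone, leaving as few as one occupied gap per segment; a ``generic perturbation'' of $x^\ast$ does not preserve membership in the occupancy sets, so it repairs nothing. The standard remedy is a limiting/compactness argument over configurations with gap lengths bounded below, and it must be set up with care since the occupancy sets are open while the degeneracy sets are closed, whereas the dual KKM theorem requires all sets of one kind. Second, for the $(p,q)$ bound you offer only keywords (``Tur\'an/entropy-type estimate,'' a ``$+d$ from normalization,'' a ``greedy'' $2p^2d$) with no mechanism. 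In \cite{Z1} the constant $\frac{2^{1/(q-1)}(ep)^{q/(q-1)}}{q}$ comes from the generalized Alon argument: the $(p,q)$ property forces, in every subfamily, a point covered by a definite fraction of its members, which bounds the fractional cover number $\tau^*$, and a fractional cover is then rounded to an integral one segment by segment at a multiplicative cost of $d$. That fractional-cover-and-rounding engine --- precisely the ingredient the survey attributes to \cite{alon1} --- is the heart of the proof of both bounds and is absent from your proposal.
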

The method  used for the proof was a combination of the KKM method, and a generalization of an elementary method introduced by Alon \cite{alon1}.  

The KKM method can be used to prove ``dual" results as well.
For example, in \cite{AHZ} this method, applied together with an extension of the dual KKM theorem (Theorem \ref{dualkkm}), was used to bound  the edge-covering number (namely
the minimal number of edges covering the entire vertex set) of a family of $d$-intervals in terms of a
parameter expressing independence of systems of partitions of the $d$ unit
intervals.

\subsection{Piercing axis-parallel rectangles}

Wegner conjectured in 1965 that the piercing number of families of axis parallel rectangles is linearly bounded by their matching number:  

\begin{conjecture}[Wegner 1965 \cite{wegner}]\label{conj:wegner}
If  $\F$ is a finite family of axis-parallel rectangles in $\R^2$ then $\tau(\F)\le 2\nu(\F)$.
\end{conjecture}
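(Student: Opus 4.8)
The plan is to run the KKM method of Section~\ref{sec:method} while exploiting the product structure of the problem: every axis-parallel rectangle $R\in\F$ is a product $I_R\times J_R$ of two compact intervals, and two rectangles meet if and only if their $x$-projections meet \emph{and} their $y$-projections meet. Writing $m=\nu(\F)$ and assuming toward a contradiction that $\tau(\F)>2m$, I would aim to produce a matching of size $m+1$. The factor $2$ suggests piercing in two stages---first cutting the $x$-axis into strips, then placing points along vertical lines inside each strip---so that at each stage one only ever pierces a one-dimensional family of intervals, exactly the situation handled by Gallai's theorem (Theorem~\ref{gallai}). Accordingly, I would model the configuration space of such ``staircase'' piercing sets by a product of two simplices $\Delta_{a-1}\times\Delta_{b-1}$, for suitable positive integers $a,b$, the first factor parametrizing the vertical cuts and the second the horizontal ones, and apply the product extensions of the (dual) KKM theorem from \cite{AHZ, NSZ}.

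If no staircase piercing set of the prescribed size meets every rectangle, then for each point of $\Delta_{a-1}\times\Delta_{b-1}$ some rectangle slips through the grid, and I would record which vertical strip and which horizontal strip it falls into to define a family of closed sets covering the product. Because the ``slipping through'' conditions at an empty cut degenerate exactly along the faces of the corresponding factor---mirroring the computation $(u_{i-1}(x),u_i(x))=\emptyset$ in the proof of Theorem~\ref{gallai}---this family can be arranged to satisfy the hypotheses of the product dual KKM theorem of \cite{AHZ, NSZ}. Its conclusion then hands back a concrete configuration of cuts together with a guaranteed collection of rectangles, each trapped in its own cell and hence pairwise disjoint in at least one of the two coordinates.

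The decisive and, I expect, hardest step is Step~4 of Section~\ref{sec:method}: upgrading these sets into a genuine matching of size $m+1$. This is exactly the codimension-$2$ difficulty flagged in Section~\ref{sec:method}: a vertical line partitions its own axis but not the plane, so the output of the topological step only certifies disjointness in $x$ for some pairs and disjointness in $y$ for others, and welding these into simultaneous (two-dimensional) disjointness requires a genuinely combinatorial argument---plausibly a K\"onig- or defect-type theorem for the bipartite incidence structure between vertical strips and horizontal strips. Obtaining the \emph{sharp} constant $2$, rather than some larger absolute constant of the kind currently available, is where I expect the method to strain: it seems to demand a new matching-theoretic ingredient beyond the purely topological conclusion of the KKM step, and it is quite possible that the clean model of the first paragraph is itself too rigid and must be replaced by a more flexible polytope. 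Both of these are precisely the twin obstacles---modeling the configuration space simply enough, and converting a codimension-$2$ KKM conclusion into a matching---that Section~\ref{sec:method} identifies as the main barriers to pushing the KKM method into the plane.
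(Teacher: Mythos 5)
You are attempting to prove a statement that is an open conjecture: Wegner's conjecture has been unresolved since 1965, and the paper offers no proof of it. The surrounding text makes the state of the art explicit: it is not even known that $\tau(\F)\le c\,\nu(\F)$ for \emph{any} absolute constant $c$ (this weaker statement is itself a conjecture of Gy\'arf\'as and Lehel \cite{GH}); the best known upper bound is $\tau \le O(\nu)(\log\log\nu)^2$ \cite{soto}; and the construction of \cite{flaass1993Covering} shows the constant, if it exists, is at least $5/3$. The paper's only positive contribution here is a reduction: project $\F$ onto the $y$-axis, use Theorem~\ref{gallai} to find $p=\nu(\F)$ horizontal lines meeting every rectangle, pass to the family $\bar\F$ of $p$-intervals, and split the problem into Conjectures~\ref{conjapr1} and~\ref{conjapr2}. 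So there is no proof to compare yours against, and your proposal---which you candidly present as a plan with its ``decisive and hardest step'' unresolved---cannot be accepted as a proof of the statement.

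Beyond the global issue, the plan has a concrete gap that occurs \emph{before} the Step~4 difficulty you flag. Your covering step asserts that if a rectangle avoids the staircase piercing set, you can ``record which vertical strip and which horizontal strip it falls into,'' so that the topological conclusion traps rectangles in distinct cells. But a piercing set here is a finite set of \emph{points}, and a rectangle avoiding all the points is not thereby confined to a cell of the associated grid: it can straddle arbitrarily many vertical and horizontal cuts while threading between the chosen points, because points do not partition the plane. Consequently the sets ``some rectangle is trapped in cell $(i,j)$'' need not cover $\Delta_{a-1}\times\Delta_{b-1}$, and the hypotheses of the product dual KKM theorems of \cite{AHZ,NSZ} fail. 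The analogy with the computation $(u_{i-1}(x),u_i(x))=\emptyset$ in the proof of Theorem~\ref{gallai} breaks at exactly this point: in $\R$ a point separates its complement, in $\R^2$ it does not---this is precisely the codimension-$2$ obstruction named in Section~\ref{sec:method}, and it blocks Step~2, not merely Step~4. If you repair trapping by piercing with full lines instead, the KKM machinery works (as in Theorem~\ref{thm:t(4)}) but then bounds the \emph{line}-piercing number, which says nothing about $2\nu$ points. Finally, note that Tomon's construction \cite{Tomon2023Lower} refutes the analogous statement for boxes in $\R^d$, $d\ge 3$, so any correct argument must exploit genuinely planar structure; a clean product-of-simplices scheme of the kind you describe would likely generalize to higher dimensions and is therefore suspect on those grounds as well. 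A more promising use of your energy, within this paper's framework, is Conjecture~\ref{conjapr1} on the restricted $p$-interval families $\bar\F$, which the authors single out as possibly approachable by the KKM method.
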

It is not even known (and was conjectured by  Gy\'arf\'as and Lehel \cite{GH}) that there exists a constant $c$ such that a family $\F$ meeting the conditions of Conjecture \ref{conj:wegner} satisfies $\tau(\F) \leq c\nu(\F)$.
 The best  known upper bound, proven by Correa, Feuilloley,  P\'erez-Lantero, and Soto \cite{soto} is $\tau \le O(\nu)(\log\log \nu)^2$). A construction of Fon-DerFlaass and Kostochka \cite{flaass1993Covering} demonstrates a family of axis parallel rectangles with $\tau \geq \lfloor 5\nu/3 \rfloor$.

  Wegner's conjecture can be reduced to a (weaker) problem on families of $d$-intervals in the following way. Let $\F$ be a family as in Wegner's conjecture and let $p=\nu(\F)$. Orthogonally projecting $\F$ onto the $y$-axis, by Gallai's theorem there exist $p$ horizontal lines that intersect all the rectangles in $\F$. For a rectangle $F\in \F$, let $\bar{F}$ be the  $p$-interval obtained by intersecting $h$ with those $p$ horizontal lines. Let $\bar{\F} = \{\bar{F}\mid F\in \F\}$. Note that  $\bar{\F}$ has a  restricted structure as a family of $p$-intervals; for example, if $\bar{F},\bar{F'}\in \bar{\F}$ and an interval  component of $\bar{F}$ precedes an interval  component of $\bar{F'}$ on some line, than all the non-empty interval components of $\bar{F}$ precede all the non-empty components of $\bar{F'}$. 
Clearly,  $\tau(\F)\le \tau(\bar{\F})$. Therefore, a constant bound in Wegner's conjecture is implied by
    the following two conjectures:
\begin{conjecture}\label{conjapr1}
There exists a  constant $A$ such that  every finite family of axis-parallel rectangles $\F$ satisfies $\tau(\bar{\F}) \le A\nu(\bar{\F})$. 
\end{conjecture}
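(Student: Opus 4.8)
\emph{Reformulation.} The plan is to work directly with the gridded family $\bar{\F}$, viewing each $\bar{F}$ as a combinatorial rectangle $I_F\times[l_F,r_F]$, where $I_F$ is its horizontal extent and $[l_F,r_F]$ is the contiguous block of the $p$ horizontal lines that it meets. A point of a piercing set is a pair (line, $x$-coordinate), and a matching of $\bar{\F}$ is a collection of such rectangles that are pairwise disjoint either in their $x$-extent or in their row-block; since every matching of $\F$ yields a matching of $\bar{\F}$, the number of rows satisfies $p\le \nu(\bar{\F})$. The goal is $\tau(\bar{\F})\le A\,\nu(\bar{\F})$ with $A$ an absolute constant.

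\emph{A logarithmic warm-up.} First I would establish the weaker bound $\tau(\bar{\F})=O(\nu(\bar{\F})\log\nu(\bar{\F}))$ by a divide-and-conquer on the rows, which already exploits the contiguity distinguishing $\bar{\F}$ from a general family of $p$-intervals. Split the block $[1,p]$ at its midpoint $m$. Every rectangle that crosses the cut (i.e.\ $l_F\le m<r_F$) contains row $m$, so on that single row its trace is the interval $I_F$; by Gallai's theorem (Theorem~\ref{gallai}) these crossing rectangles are pierced by $\nu_m$ points placed on row $m$, where $\nu_m$ is the largest number of crossing rectangles with pairwise disjoint $x$-extents. Recurse on the top and bottom halves, supported on disjoint row-blocks. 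The key accounting observation is that the per-node matchings produced at one level of the recursion live on disjoint row-blocks, hence are pairwise disjoint across nodes, so their union is a single matching of $\bar{\F}$; therefore the total number of piercing points used at each level is at most $\nu(\bar{\F})$. Since the recursion has $O(\log p)=O(\log\nu(\bar{\F}))$ levels, summing gives the stated bound.

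\emph{Removing the logarithm.} The logarithmic loss arises precisely from charging the full matching budget $\nu(\bar{\F})$ independently at each of the $O(\log\nu(\bar{\F}))$ levels. To obtain a constant I would replace this level-by-level bookkeeping with a single global application of the KKM machinery of Section~\ref{sec:framework}: following Steps 1--3 of the method, model all piercings of $\bar{\F}$ by a suitable polytope, assume toward a contradiction that $\tau(\bar{\F})>A\,\nu(\bar{\F})$ to obtain a (possibly colorful or $(k,n)$-sparse) KKMS cover, and apply a polytopal KKMS theorem (Theorem~\ref{kkms} or its extensions Theorems~\ref{thm:col-komiya} and~\ref{thm:sparse-komiya}) to produce balanced faces whose sets meet in a common point.

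\emph{Main obstacle.} I expect the hard part to be Step 4: converting the intersecting balanced faces into a genuine matching of $\bar{\F}$ of size exceeding $\nu(\bar{\F})$, with a rounding loss that is an absolute constant rather than the $d^2-d+1$ of the unstructured setting (here $d=p$). The interval, indeed laminar, structure of the row-blocks is exactly the extra hypothesis that should make constant-loss rounding possible, but turning it into a proof appears essentially as hard as Conjecture~\ref{conj:wegner} itself, since together with a companion bound of the form $\nu(\bar{\F})=O(\nu(\F))$ it would settle the constant in Wegner's conjecture. I therefore would not expect a purely formal argument, and anticipate needing a tailored geometric-combinatorial analysis in the spirit of McGinnis's treatment of $c(4,3;2)$ in \cite{mcginnis}; short of a constant, adapting the fractional-covering techniques behind the current $O(\nu\,(\log\log\nu)^2)$ record \cite{soto} to the gridded setting would already improve the warm-up bound above.
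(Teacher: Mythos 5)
The statement you were asked to prove is presented in the paper as Conjecture~\ref{conjapr1}, an open problem: the paper offers no proof of it, so the only fair comparison is between your attempt and the problem itself. Within that frame, your reformulation and your logarithmic warm-up are correct and worthwhile. Each $\bar{F}$ is indeed a combinatorial rectangle $I_F\times[l_F,r_F]$, since an axis-parallel rectangle meets a contiguous block of the $p$ horizontal lines and its trace on every such line is the same interval $I_F$; rectangles with pairwise disjoint $x$-extents that share a row form a matching of $\bar{\F}$, and matchings produced at distinct nodes of one level of your recursion live on disjoint row-blocks, so their union is a single matching. This gives $\tau(\bar{\F})\le \nu(\bar{\F})\lceil\log_2 p\rceil$, and since $p=\nu(\F)\le\nu(\bar{\F})$, the bound $\tau(\bar{\F})=O(\nu(\bar{\F})\log\nu(\bar{\F}))$ follows. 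That is genuine partial progress toward the conjecture, and it exploits exactly the contiguity that distinguishes $\bar{\F}$ from arbitrary $p$-interval families.

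The constant-factor claim, however, is not proved, and the gap sits precisely where you place it. Your ``removing the logarithm'' section is a programme, not an argument: you never specify the polytope modeling piercing sets of $O(\nu)$ points distributed over $p$ rows (itself nontrivial, since the configuration space is no longer a single simplex as in the proof of Theorem~\ref{gallai}), you never exhibit the KKMS cover that the assumption $\tau(\bar{\F})>A\nu(\bar{\F})$ would yield, and you give no Step-4 rounding. Worse, a blind application of the existing machinery cannot succeed: treating $\bar{\F}$ as an unstructured family of $p$-intervals, the KKMS route of \cite{AKZ} reproduces the loss factor $p^2-p+1$ of Theorem~\ref{d-intervals}, and with $p=\Theta(\nu(\bar{\F}))$ this is cubic in $\nu(\bar{\F})$ --- strictly worse than your warm-up --- while Matou\v{s}ek's construction \cite{matousek2001Lower} shows that a loss of order $d^2/\log d$ is unavoidable for general $d$-intervals, so the laminar row-block structure must enter the topological argument in an essential, currently unidentified way. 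You acknowledge this candidly, which is to your credit, but the verdict stands: a correct $O(\nu\log\nu)$ partial result plus an honest research plan, not a proof; Conjecture~\ref{conjapr1} remains open.
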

\begin{conjecture}\label{conjapr2}
There exists a  constant $B$ such that  every finite family of axis-parallel rectangles $\F$ satisfies $\nu(\bar{\F}) \le B\nu(\F)$.
\end{conjecture}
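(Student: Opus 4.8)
The plan is to bound a maximum matching $M$ in $\bar{\F}$ directly, exploiting the very restricted shape of the members of $\bar{\F}$. Write $p=\nu(\F)$ and recall that the $p$ horizontal lines produced by Gallai's theorem (Theorem~\ref{gallai}) pierce every rectangle. For a rectangle $F=X_F\times Y_F$ (with $X_F,Y_F$ its $x$- and $y$-projections), the lines meeting $F$ form a contiguous block $L_F$ of line-indices, and on \emph{every} line it meets the trace of $F$ is the same interval $X_F$. Hence two members $\bar F,\bar{F'}$ of $\bar{\F}$ are disjoint as $p$-intervals precisely when $X_F\cap X_{F'}=\emptyset$ or $L_F\cap L_{F'}=\emptyset$; equivalently, regarding each $L_F$ as a (discrete) interval of line-indices, the boxes $X_F\times L_F$ and $X_{F'}\times L_{F'}$ are disjoint. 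Thus a matching in $\bar{\F}$ is exactly a family of pairwise-disjoint axis-parallel boxes in the strip $\R\times[1,p]$, and this is the combinatorial core I would analyze.

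First I would record the easy quadratic bound as a benchmark, to isolate where the difficulty lies. Consider the intersection graph $H$ of the $x$-intervals $\{X_F : F\in M\}$; this is an interval graph, hence perfect. Any clique of $H$ is a set of rectangles that pairwise overlap in $x$, so by the matching condition their blocks $L_F$ are pairwise disjoint, forcing the clique to have at most $p$ members. Perfectness then gives a proper coloring of $H$ with $\chi(H)=\omega(H)\le p$ colors, and each color class consists of rectangles with pairwise disjoint $x$-projections, hence pairwise disjoint rectangles, i.e. a matching in $\F$ of size at most $\nu(\F)=p$. Multiplying, $\nu(\bar{\F})=|M|\le \chi(H)\cdot p\le p^2=\nu(\F)^2$.

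The whole problem is to replace this quadratic bound by the conjectured linear one, and this is the step I expect to carry the entire weight of a proof. The natural reductions I would try are: (i) peel off the rectangles meeting the bottommost line, which have pairwise disjoint $x$-projections and so number at most $p$, and induct on the remaining lines; and (ii) fix a maximum matching $D\subseteq M$ (so $|D|\le p$) and bound, for each $R^\ast\in D$, the number of members of $M$ meeting $R^\ast$. The obstacle, which is presumably why the conjecture is open, is that each such localization re-presents exactly the same two constraints on the sub-family — an ``$x$-depth at most $p$'' constraint and a ``planar matching at most $p$'' constraint — so the interval-graph argument above reproduces the product bound $p\cdot p$ rather than a constant. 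Breaking this requires amortizing the line budget \emph{globally} rather than line-by-line: one wants a charging or potential argument that simultaneously registers that a tall vertical stack of boxes in the strip forces many pairwise intersections of the original rectangles (shrinking $\nu(\F)$), while a wide horizontal layer forces a large matching (again bounded by $\nu(\F)$), so that the two effects are traded off additively instead of multiplied. Formalizing this trade-off — equivalently, showing that the structured box families arising here cannot realize an intersection graph with both clique and independence number at most $p$ on far more than $p$ vertices — is the crux I would expect to be the main difficulty.
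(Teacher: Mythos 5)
You were asked to prove Conjecture \ref{conjapr2}, and the first thing to say is that the paper itself contains no proof to compare against: this statement is posed there as an open problem, one of the two conjectures (together with Conjecture \ref{conjapr1}) whose combination would give a constant bound in Wegner's conjecture (Conjecture \ref{conj:wegner}). Judged on its own terms, your writeup is a correct partial result, honestly labeled as such, rather than a proof. Your reformulation is accurate: since the trace of a rectangle $F$ on every piercing line it meets is the same interval $X_F$, and the block $L_F$ of line indices is contiguous, a matching in $\bar{\F}$ is exactly a pairwise-disjoint family of boxes $X_F\times L_F$ --- precisely the ``restricted structure'' the paper remarks on. Your quadratic benchmark $\nu(\bar{\F})\le \nu(\F)^2$ is sound: a clique of the $x$-interval graph forces pairwise disjoint nonempty blocks in $[p]$, so $\omega\le p$; perfection gives $\chi=\omega\le p$; and each color class is a matching in $\F$, hence of size at most $p$. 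The genuine gap is that the conjectured linear bound is never established: the ``amortized charging'' argument of your final paragraph is a desideratum, not an argument, and as you essentially observe, the abstract parameters $\omega\le p$, $\alpha\le p$ alone are consistent with $p^2$ vertices (a disjoint union of $p$ cliques of size $p$ is an interval graph), so no refinement of the interval-graph bookkeeping can beat the product bound without injecting more geometry.

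That said, your own reduction is stronger than you give it credit for, and I would push on the following route: color the intersection graph of the rectangles themselves, restricted to a matching $M$ of $\bar{\F}$, rather than the graph of their $x$-projections. If $F_1,F_2,F_3\in M$ were pairwise intersecting rectangles, then by the Helly property of axis-parallel boxes they share a point $(x_0,y_0)$; since all three $x$-projections contain $x_0$, disjointness of the traces forces $L_{F_1},L_{F_2},L_{F_3}$ to be pairwise disjoint. Pick a piercing line height $h_i\in Y_{F_i}$ for each $i$; no $h_i$ can equal $y_0$ (else that line's index would lie in two of the $L$'s), so two of the three, say $h_1<h_2<y_0$, lie on the same side of $y_0$ --- but then $Y_{F_1}$, an interval containing both $h_1$ and $y_0$, contains $h_2$, contradicting $L_{F_1}\cap L_{F_2}=\emptyset$. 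Hence the rectangle intersection graph on $M$ is triangle-free, and by the theorem of Asplund and Gr\"unbaum, triangle-free families of axis-parallel rectangles in the plane have chromatic number at most an absolute constant $c$; the largest color class is then a family of pairwise disjoint rectangles, i.e.\ a matching in $\F$, of size at least $|M|/c$, yielding $\nu(\bar{\F})\le c\,\nu(\F)$. If this checks out --- and you should verify it carefully before claiming it --- the crux you isolated dissolves: the obstruction was an artifact of working in the $x$-interval graph, where the relevant clique bound is $p$, instead of the rectangle graph, where it is $2$. This is exactly the ``trade-off registered additively rather than multiplicatively'' that your last paragraph asks for, realized by swapping which intersection graph carries the coloring.
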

Conjecture \ref{conjapr1} is interesting by itself, as it may shed light on the behavior of families of axis-parallel rectangles as well as $d$-intervals. It is possibly approachable with  the KKM method. 

For a while it was believed that Wegner's conjecture may be true in higher dimensions as well. This was refuted recently when Tomon \cite{Tomon2023Lower} constructed a family of axis-parallel boxes in $\R^d$, for all $d\ge 3$, with $\tau \ge \Omega_d(\nu) \cdot \left(\frac{\log(\nu)}{\log \log(\nu)} \right)^{d-2}$.

\subsection{Piercing sets with lines}\label{sec:LinePiercing}

We say that a family $\F$ of sets in $\R^d$ is {\em pierced by $k$ lines}, if there exist $k$ lines in $\R^d$ whose union intersects every set in $\F$. The {\em line-piercing number} of $\F$ is the smallest $k$ such that $\F$ is pierced by $k$ lines. We say that $\F$ has the {\em $T(k)$ property} if every $k$ or fewer sets in $\F$ are pierced by a line.

The question of bounding the line-piercing number of families of convex sets in $\R^d$ has received considerable attention.
In 1969 Eckhoff \cite{Eckhoff1} proved that if a family of compact convex sets in $\R^2$  has the $T(k)$ property, for $k\ge 4$, then it is pierced by two lines. In fact, he showed that onethe direction of one of the lines can be fixed in advance. In 1993 
Eckhoff \cite{Eckhoff3}  proved that a family of compact convex sets  with the $T(3)$ property is pierced by 4 lines, and conjectured that  this bound can be improved to 3, which is best possible \cite{Eckhoff2}. This conjecture was proved in \cite{MZ1}, using the KKM-theorem. In the proof, one of the three lines can be chosen to pass through a point fixed in advance. Again, by replacing the KKM theorem with the colorful  KKM theorem, one obtains a colorful versions of these results. We note that we believe the first statement of Theorem \ref{thm:T3} below to be true when there are only 3 families, rather than 6, however, this stronger statement does not seem to follow from a straightforward application of the colorful KKM theorem.
\begin{theorem}[McGinnis-Zerbib 2022 \cite{MZ1}]\label{thm:T3} \hfill
\begin{enumerate}
    \item
    Let $\F_1,\dots,\F_6$ be families of compact convex sets in $\R^2$ such that every colorful collection of  three sets  have a line transversal. Then there exists  $1\leq i\leq 6$ such that the line-piercing number of $\F_i$ is at most $3$.

\item
    Let $\F_1,\dots,\F_4$ be families of compact convex sets in $\R^2$ such that every colorful collection of  four sets  have a line transversal. Then there exists   $1\leq i\leq 4$ such that the line-piercing number of $\F_i$ is at most $2$.
    \end{enumerate}
Moreover, in both statements one can fix in advance a point outside the convex hull of the union of all the sets through which one of the piercing lines passes.    
\end{theorem}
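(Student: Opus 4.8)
The plan is to run the four-step KKM method of Section~\ref{sec:method}, using the colorful KKM theorem (Theorem~\ref{colkkm}) as the black box, and to reduce the genuinely two-dimensional line-transversal problem to an interval problem on a low-dimensional simplex by viewing every set from the fixed external point. Throughout I argue by contradiction: I assume that \emph{every} $\F_i$ has line-piercing number exceeding the target ($4$ in statement~1, $3$ in statement~2) and aim to manufacture a colorful tuple with no line transversal, contradicting the $T(3)$ (resp.\ $T(4)$) hypothesis.

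First I would set up the configuration space (Step~1). Fix the point $p$ outside $\conv(\bigcup_i \F_i)$. Because $p$ is external, the rays from $p$ that meet the sets sweep out an angular wedge of opening less than $\pi$, which I identify with $[0,1]$; each compact convex set $K$ then corresponds to a closed angular subinterval $I(K)=[a_K,b_K]\subseteq[0,1]$, and a line through $p$ meets $K$ precisely when its direction lies in $I(K)$. This external viewpoint linearly orders the sets, which is exactly what lets a one-dimensional simplex capture the problem. For statement~1 a point $x\in\Delta_2$ supplies two cut directions $u_1(x)\le u_2(x)$ splitting $[0,1]$ into three fans, and I attempt to pierce the family by three lines, at least one drawn from the pencil through $p$ (this is the line the theorem lets us prescribe through $p$) and the others chosen as transversals to the sets falling into the remaining fans; statement~2 uses the one-dimensional simplex $\Delta_1$ and correspondingly fewer lines. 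Exactly as in the proof of Gallai's theorem (Theorem~\ref{gallai}), I then define, for each family $\F_i$, a set $A^i_j$ consisting of those configurations $x$ for which some set of $\F_i$ lying strictly inside the $j$-th fan escapes the line assigned to that fan. Vanishing of a fan when its barycentric coordinate is $0$ should yield the KKM covering condition, and compactness of the sets should yield openness of the $A^i_j$.

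Applying the colorful KKM theorem to covers assembled from the families (Steps~3--4) then produces a permutation and a point common to the chosen sets, which I read off as a colorful tuple of sets, one per fan, simultaneously unpierced by any admissible line; the order structure supplied by $p$ should upgrade this to the statement that the tuple has \emph{no} line transversal, the desired contradiction. I expect the appearance of $6=2\cdot 3$ and $4=2\cdot 2$ families, rather than $3$ and $2$, to come from the two orientations in which a directed transversal can meet an ordered tuple: each orientation demands its own family, so that the merged cover for a given fan is a legitimate KKM cover whichever orientation occurs. This matches the remark preceding the theorem, which records that the doubling is an artifact of the method that one does not see how to eliminate. Finally, since $p$ is an arbitrary external point fixed at the outset and one constructed line passes through it, the ``moreover'' clause is immediate.

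The hard part will be Step~1 together with the back-translation in Step~3: pinning down how the free lines are selected as a continuous function of $x$ so that (a) a point of the simplex genuinely encodes a valid piercing attempt, (b) the $A^i_j$ really form KKM covers, and (c) a point in the common intersection certifies the \emph{complete} absence of a transversal for the resulting colorful tuple, not merely the failure of the particular lines the construction proposed. This is where line transversals depart from point piercing: unlike a point, which either lies in a set or not, a transversal is constrained by the order in which it must meet the sets, so the argument needs a Hadwiger-type ordering fact --- available precisely because all sets are seen in one consistent angular order from $p$ --- to convert local piercing failures in the individual fans into a global no-transversal conclusion. Making this conversion match $T(3)$ for three lines and $T(4)$ for two lines is the crux, and is presumably what forces the fan decomposition into exactly three (resp.\ two) parts.
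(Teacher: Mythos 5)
There is a genuine gap, and it sits exactly where you flagged the ``hard part'': Step~3 cannot be closed from your configuration space. If the regions are fans cut out by lines through the single external point $p$ (two cut directions inside the angular wedge for statement~1), then any line crossing both cut lines meets all three fans; concurrent lines through one point never create a region pattern that obstructs a transversal. So a colorful triple with one set in each fan can perfectly well have a line transversal, and no Hadwiger-type ordering fact rescues this: the order in which sets are seen from $p$ constrains lines \emph{through} $p$, not arbitrary lines. The KKM method needs the candidate piercing lines themselves to carve out regions that no single line can meet simultaneously, which forces the lines to cross each other away from a common point. This is precisely how the paper's demonstration works in Theorem~\ref{thm:t(4)}: the point $x\in\Delta_3$ is used to place points on a bounding circle which are \emph{paired into two chords} in general position, producing four quadrants $R_1(x),\dots,R_4(x)$, and the contradiction is that a line meets at most three of the four open quadrants. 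The proof of Theorem~\ref{thm:T3} in the cited work follows the same template (points on a circle paired into chords, with one chord anchored at the fixed external point), not a pencil-of-rays picture.

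A second, related defect is the numerology. The colorful KKM theorem (Theorem~\ref{colkkm}) on $\Delta_{k-1}$ consumes $k$ covers of $k$ sets each and returns $k$ intersecting colorful sets; your $\Delta_2$ accommodates only $3$ of the $6$ families and your $\Delta_1$ only $2$ of the $4$. Your proposed fix --- merging two families per fan, one per ``orientation'' --- is not a construction: you neither define the merged sets nor verify the KKM covering condition for them, and the output of the colorful KKM theorem would not disentangle which family of the pair supplied the witness set. In the actual method the number of families equals the number of vertices of the simplex, which equals the number of regions: statement~2 runs on $\Delta_3$ with four quadrant regions exactly as in Theorem~\ref{thm:t(4)} (with the fixed direction replaced by anchoring at the fixed point), and statement~1 runs on a simplex whose six colorful output sets sit in the six regions created by three pairwise-crossing chords, from which one extracts a colorful triple admitting no transversal --- this is why $6$ appears, not a directed-transversal doubling of $3$. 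Finally, your auxiliary lines ``chosen as transversals to the sets falling into the remaining fans'' are not a function of $x$: such a transversal need not exist (that is the very case driving the contradiction), and no continuous selection is specified, so the sets $A^i_j$ are not well-defined and Step~2 fails before the covering condition can even be checked.
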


Here the application of the KKM theorem in the proof is somewhat different than before: we use a simplex to model the configuration space of 
 points on a circle, and then we pairs of points to form lines in a certain pre-prescribed way. This  approach has been fruitful in proving results on piercing numbers and line-piercing numbers in $\R^2$, see \cite{Z2, mcginnis2, GR}, and we believe it has potential for further applications. 

We  demonstrate it by providing a colorful version  of Eckhoff's result that a finite family of convex sets in $\mathbb{R}^2$ satisfying the $T(4)$ property can be pierced by 2 lines, where the direction of one of the lines is fixed in advance. 

\begin{theorem}\label{thm:t(4)}
    Let $\F_1,\dots,\F_4$ be finite families of compact convex sets in $\mathbb{R}^2$ such that every colorful collection of four sets can be pierced by a line, and let $v$ be a fixed direction. Then there exists  $1\leq i\leq 4$ such that $\F_i$ is  pierced by two lines, one of which is parallel to $v$.
\end{theorem}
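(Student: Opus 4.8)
The plan is to run the KKM method exactly as in the proof of Gallai's theorem (\tref{gallai}), but modeling configurations of \emph{pairs} of lines, one of them forced parallel to $v$, and feeding the resulting covers into the colorful KKM theorem (\tref{colkkm}). First I would normalize: after an affine change of coordinates assume $v$ is vertical, and enclose every set of $\F_1\cup\dots\cup\F_4$ in the interior of a large disk $C$, rescaling so that the construction below is nondegenerate. The proof is by contradiction, so I assume that for every $i\in[4]$ the family $\F_i$ \emph{cannot} be pierced by two lines one of which is vertical.

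Next I would build the configuration space. Following the ``points on a circle paired into lines'' construction of \cite{MZ1}, I would define a continuous map from $\Delta_3$ to pairs of crossing chords of $C$: a point $x=(x_1,x_2,x_3,x_4)$ determines four points on $C$ whose four arc-lengths are governed by $x_1,\dots,x_4$, and these four points are paired into two chords. The pairing is arranged so that one chord is \emph{always vertical} (its endpoints vertically aligned, so it is the chord of a line $x=s(x)$), while the other chord is unconstrained; the two chords cross, dividing the interior of $C$ into four open quadrants $R_1(x),\dots,R_4(x)$, with $R_i(x)$ capped by the $i$-th arc. The essential feature to guarantee is the \emph{degeneration property}: when $x_i=0$ the $i$-th arc collapses and $R_i(x)=\emptyset$, mirroring the vanishing of the interval $(u_{i-1},u_i)$ in the Gallai proof.

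Then I would define the covers and apply colorful KKM. For each color $j\in[4]$ and each $i\in[4]$ set
\[
A_i^j=\{\,x\in\Delta_3\mid \text{some } F\in\F_j \text{ satisfies } F\subseteq R_i(x)\,\}.
\]
Because the sets are compact and the quadrants vary continuously and openly, each $A_i^j$ is open; and since a convex set missed by both lines is connected and avoids both lines, it lies in a single open quadrant. Hence the contradiction hypothesis (no $\F_j$ is pierced by the two lines of configuration $x$) gives $\bigcup_{i=1}^4 A_i^j=\Delta_3$. The degeneration property yields the KKM covering condition: if $x$ lies on a face $\sigma$ then $x_i=0$ for $i\notin\sigma$, so $R_i(x)=\emptyset$ and $x\notin A_i^j$, forcing $x\in\bigcup_{i\in\sigma}A_i^j$. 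Thus $\{A_1^j,\dots,A_4^j\}$ is a KKM cover for each $j$, and \tref{colkkm} produces a permutation $\pi\in S_4$ and a point $x^\ast\in\bigcap_{i=1}^4 A_{\pi(i)}^i$. At $x^\ast$, for each color $i$ there is $F_i\in\F_i$ with $F_i\subseteq R_{\pi(i)}(x^\ast)$; since $\pi$ is a permutation, $F_1,F_2,F_3,F_4$ occupy the four distinct open quadrants. As neither of the two lines meets any $F_i$, a transversal must be distinct from both, meeting each in at most one point, so it is cut into at most three pieces and enters at most three quadrants; hence $\{F_1,F_2,F_3,F_4\}$ has no line transversal, contradicting the $T(4)$ hypothesis.

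The hard part will be the construction of the configuration map: arranging simultaneously that one chord is identically vertical while retaining three degrees of freedom, that the four quadrants biject with the four vertices of $\Delta_3$, and that the degeneration property holds on \emph{every} face (this is precisely what powers the covering condition). A related subtlety is the treatment of degenerate configurations in which the second line becomes parallel to $v$: there the two lines no longer cross and only three strips remain, so I must ensure either that such configurations do not arise at $x^\ast$ or that the quadrant labeling degenerates compatibly with the covering condition. Everything after the construction — openness of the $A_i^j$, the covering condition, and the final counting argument — is routine.
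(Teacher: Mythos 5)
Your proposal is correct and takes essentially the same route as the paper: the same normalization, the same $\Delta_3$-to-(two lines, one vertical) configuration space with the degeneration property $x_i=0\Rightarrow R_i(x)=\emptyset$, the identical covers $A_i^j$, and the same application of the colorful KKM theorem to place four colorful sets in four distinct quadrants, which no single line can meet. The ``hard part'' you flag is handled in the paper exactly as you anticipate: the vertical chord is the right tangent line translated left by $x_1+x_2$, and the second chord joins points $p(x),q(x)$ placed on the two arcs at proportions $\frac{x_1}{x_1+x_2}$ and $\frac{x_3}{x_3+x_4}$, so it always crosses the vertical chord inside the disk (a second vertical line distinct from $\ell_1(x)$ never arises, and the collapses on faces are compatible with the covering condition).
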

\begin{proof} We apply the KKM method with the colorful KKM theorem.
\medskip

{\bf Step 1: Modeling the configuration space of all relevant two lines.}
By rescaling the plane, we may assume that all the sets are contained in the  disk centered in the origin with radius $1/2$. Let  $C$ be the  circle bounding this disk. Assume without loss of generality that $v$ is in the vertical direction. 

A point $x=(x_1,x_2,x_3,x_{4})\in \Delta_{3}$ corresponds to two lines $\ell_1(x), \ell_2(x)$ as follows. We first obtain that line  $\ell_1(x)$ by taking the vertical line tangent to $C$ at the point $(1/2,0)$ and translating it a distance of $x_1 + x_2$ to the left. Let $D$ and $U$ 
%\textcolor{red}{I think $A$ (above) $B$ (below) are better here. Otherwise you get $U$ (up) and then the counterpart should be $D$ (down), or $L$ (low) and then the counterpart should be $H$ (high). $U$ and $D$ just don't fit together} 
be the intersection points of $\ell_1(x)$ with $C$, where $D$ lies below $U$ (note that $U=D=(1/2,0)$ if $x_1+x_2=0$ and $U=D=(-1/2,0)$ if $x_1+x_2=1$). 

If $x_1+x_2 \neq 0$, let $p(x)$ be the point on $C$ such that the counterclockwise arc length from $D$ to $p(x)$ is an $\frac{x_1}{x_1+x_2}$ proportion of the arc length from $D$ to $U$ counterclockwise. Otherwise, if $x_1+x_2 = 0$, let $p(x)= U=D = (1/2,0)$. 
Similarly, if $x_3+x_4 \neq 0$, let $q(x)$ be the point on $C$ such that the counterclockwise arc length from $U$ to $q(x)$ is an $\frac{x_3}{x_3+x_4}$ proportion of the counterclockwise arc length from $U$ to $D$. Otherwise, if  $x_3+x_4 = 0$, let $q(x)= U=D = (-1/2,0)$. Note that it is not possible that both $x_1+x_2 =0$ and $x_3+x_4 =0$.
Finally, let $\ell_2(x)$ be the line connecting $p(x)$ and $q(x)$. It is possible that $p(x)=q(x)$ in which case they do not define a line. In this case take $\ell_2(x)$ to be a point (see Figure \ref{figure1}).

Now let $R_1(x),\dots,R_4(x)$ be the open quadrants in $D$ defined by the lines $\ell_1(x), \ell_2(x)$, as in Figure \ref{figure1}. Note that by construction, for every $i\in [4]$, if $x_i=0$ then $R_i(x)=\emptyset$.

\medskip
{\bf Step 2: Obtaining KKM covers.}
For $i,j\in [4]$ define  $A^i_j\subset \Delta_{3}$ to be the set of points $x \in \Delta_{3}$ such that $R_j(x)$ contains a set from $\F_i$. Note that $A^i_j$ is open, and if $x_j=0$, then $x\notin A^i_j$ for all $j$. Moreover, if no two lines pierce any $\F_i$, then  $\Delta_{3} \subset \bigcup_{j=1}^{4} A^i_j$ for all $i$. It follows that the conditions of the colorful KKM theorem  (Theorem \ref{thm:kkm}) are satisfied, namely for all $i\in [4]$, the sets  $A^i_1,\dots,A^i_4$ form a KKM cover.

\medskip
{\bf Step 3: Applying the colorful KKM theorem.}
By Theorem \ref{thm:kkm} there exists a permutation $\pi:[4] \rightarrow [4]$ and a point $x_0\in \bigcap_{i=1}^{4} A_{\pi(i)}^i$. Therefore, there is a colorful selection of sets that are contained in distinct quadrants defined by the 2 lines associated to $x_0$. However,  there is no line that can pierce such a collection of sets, contradicting the condition of the theorem.
\end{proof}

\begin{center}
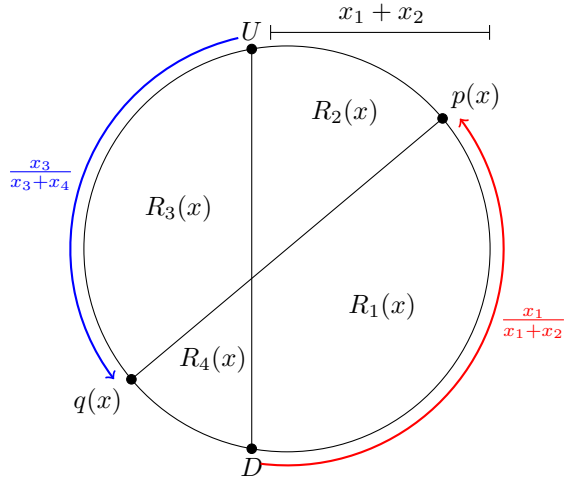
\begin{figure}
\begin{center}
    \begin{tikzpicture}[scale=.9]
    \draw (0,0) circle (3cm);

\filldraw [black] (100:3cm) circle (2pt) node[above] {$U$};

\filldraw [black] (260:3cm) circle (2pt) node[below] {$D$};
\filldraw [black] (40:3cm) circle (2pt) node[above right] {$p(x)$};
\filldraw [black] (220:3cm) circle (2pt) node[below left] {$q(x)$};

\draw (100:3cm) -- (260:3cm);
\draw (40:3cm) -- (220:3cm);
\draw[thick,red,->] ([shift=(263:3.2cm)]0,0) arc  (263:397:3.2cm);
\filldraw [red] (340:3.2cm) node[right] {$\frac{x_1}{x_1 + x_2}$};

\draw[thick,blue,->] ([shift=(103:3.2cm)]0,0) arc  (103:217:3.2cm);
\filldraw [blue] (160:3.2cm) node[left] {$\frac{x_3}{x_3 + x_4}$};

\draw[|-|] (-.25cm,3.2cm) -- (3cm,3.2cm);
\filldraw [] (1.4cm,3.2cm) node[above] {${x_1 + x_2}$};

\filldraw [] (340:1.5cm) node[below] {$R_1(x)$};
\filldraw [] (150:1.85cm) node[below] {$R_3(x)$};
\filldraw [] (70:2.5cm) node[below] {$R_2(x)$};
\filldraw [] (230:1.7cm) node[below] {$R_4(x)$};

\end{tikzpicture}
\end{center}
\caption{In the proof of Theorem \ref{thm:t(4)}, a point $x=(x_1,\dots,x_4)\in \Delta_3$ corresponds to two lines, one of which is vertical,  and four regions. The length of the red arc is an $\frac{x_1}{x_1 + x_2}$ proportion of the length of the arc from $D$ to $U$ counterclockwise. The length of the blue arc is an $\frac{x_3}{x_3 + x_4}$ proportion of the length of the arc from $U$ to $D$ counterclockwise.}
    \label{figure1}
\end{figure}
\end{center}

%It is possible to modify the above arguments to obtain different divisions of the plane. For example, given $x = (x_1,\dots,x_4)\in \Delta_3$, we can again define a vertical line that's translated to the right a distance of $x_1$ to the left (instead of $x_1+x_2$). Let $p(x)$ (respectively $q(x)$) be the points on $C$ such that the counterclockwise arc length from $U$ to $p(x)$ ($q(x)$) is an $\frac{x_2}{x_2+x_3+x_4}$ ($\frac{x_2+x_3}{x_2+x_3+x_4}$) proportion of the arc length from $U$ to $D$ following $C$ counterclockwise (see Figure \ref{figure2}). Define $m(x)$ to be the midpoint of the vertical line, and we take the lines defined by $p(x)$ and $m(x)$ and by $q(x)$ and $m(x)$ (in the case that $x_1=1$, these lines degenerate to a point). We define the regions $R^1_x,R^2_x, R^3_x,R^4_x$ to the be the resulting open regions defined by these lines as in Figure \ref{figure2}.

Another direction of research concerns the notion of {\em fractional line-piercing numbers}. More formally, one would like to answer the following:
    \begin{question}\label{qfraclines}
    What is the largest constant $\alpha(k) \in (0,1)$ such that for any finite family  $\mathcal{F}$ of convex sets in $\R^2$ with the $T(k)$-property, there is a line intersecting $\alpha(k)|\mathcal{F}|$ members of $\mathcal{F}$?
    \end{question}
Katchalski and  Liu \cite{KatchalskiLiu} showed that  $\alpha(k)$ tends to 1 as $k$ goes to infinity. Holmsen \cite{holmsen} proved that $\alpha(k)\leq \frac{k-2}{k-1}$, $\frac{1}{3}\leq \alpha(3)\leq \frac{1}{2}$ and $\frac{1}{2}\leq \alpha(4)\leq \frac{2}{3}$. It is possible that the topological approach described in this proposal can be used for improve the upper bounds on $\alpha(k)$. Indeed, in \cite{AKZ} we used the topological method to bound fractional (point-)piercing numbers of certain families, and in \cite{MZ1,Z2} we used the topological method to bound the (integral) line-piercing numbers of certain families. Combining the two approaches may be useful for Question \ref{qfraclines}. 
It will be also interesting to determine  $\alpha(k)$ when the family $\mathcal{F}$ is further restricted (e.g., when $\F$ consists of disks).

As mentioned above, the KKM method works well for bounding line-piercing numbers in $\R^2$, but it is  an intriguing problem is to apply it in higher dimensions. A motivating conjecture is the following:

\begin{conjecture}[Mart\'inez-Rold\'an-Rubin 2020 \cite{MRR}]
     There exists a constant $c$ with the following property: if $\F$ be a finite family of compact pairwise intersecting convex sets in $\R^3$ then there is a line intersecting $c|\F|$ sets of $\F$. 
\end{conjecture}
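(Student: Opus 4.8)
The statement is an open conjecture; here I outline how the KKM method of Section~\ref{sec:method} might be brought to bear, and where I expect it to break down. The plan is to treat lines in $\R^3$ as the objects we pierce with, playing the role that points play in Theorem~\ref{gallai} and that the two planar lines play in Theorem~\ref{thm:t(4)}. Because the desired conclusion is \emph{fractional} — we only want a line meeting at least $c|\F|$ of the sets, not a transversal of all of $\F$ — the topological step must be paired with a counting argument, in the spirit of the fractional piercing bounds of \cite{AKZ} and the analysis of $\alpha(k)$ in Question~\ref{qfraclines}. Concretely, I would enclose $\bigcup\F$ in a ball $B$ with boundary sphere $S$ and model candidate transversals by tuples of points on $S$, exactly as the circle construction in the proof of Theorem~\ref{thm:t(4)} models planar lines by chords: a point $x$ of a configuration polytope $P$ is sent to a $k$-tuple of points on $S$, consecutive (or suitably paired) points are joined to produce a bounded list of candidate lines $\ell_1(x),\dots,\ell_m(x)$, and the boundary faces of $P$ are arranged so that a point on a face "switches off" the lines indexed outside that face, precisely as $x_i=0$ forces $R_i(x)=\emptyset$ in the proof of Theorem~\ref{thm:t(4)}.

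A useful structural input, which I would exploit to cut down the search dimension, is the following consequence of pairwise intersection: for every direction $w$, the orthogonal projections of the sets of $\F$ to the $w$-axis are pairwise-intersecting compact intervals, so by one-dimensional Helly they share a point, and hence there is a hyperplane orthogonal to $w$ meeting \emph{every} set of $\F$. Thus $\F$ has a stabbing plane in every direction, and a two-stage strategy suggests itself: first select a good plane $H$ by a topological argument over the sphere of directions, then select a good line inside or near $H$ by the planar machinery behind Theorem~\ref{thm:t(4)}. Assuming toward a contradiction that no line meets more than $c|\F|$ sets, each configuration $x$ fails to yield a good line, and the resulting "bad" sets should assemble into a KKMS cover of $P$ — or, to encode the fractional and colorful counting, into a $(k,n)$-sparse KKMS cover. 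Applying Theorem~\ref{kkms}, Theorem~\ref{thm:col-komiya}, or Theorem~\ref{thm:sparse-komiya} would then force many cover sets, supported on balanced faces, to meet at a common point $x_0$, and the goal is to read off from $x_0$ a single line hitting too many sets.

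The hard part will be exactly the obstruction flagged in Section~\ref{sec:method}: lines in $\R^3$ have codimension two, so the candidate lines attached to $x_0$ do \emph{not} partition space into regions the way the two lines of Theorem~\ref{thm:t(4)} cut the disk into quadrants. Consequently the KKM conclusion — several cover sets on balanced faces intersecting — no longer delivers a clean combinatorial partition from which a large transversal can be extracted, and a matching-theoretic fourth step (Step~4 of Section~\ref{sec:method}) seems unavoidable but is at present unclear. A second, genuine difficulty is to model the four-dimensional space of lines meeting $B$ by a polytope that is \emph{simple enough} for the KKM-type conclusion to produce \emph{many} intersecting sets, yet \emph{faithful enough} that intersecting cover sets really certify a common line; balancing these two demands is, in my view, the crux. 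This is where new ideas are most likely needed — for instance, restricting attention to lines lying in a controlled pencil of the stabbing planes produced above, so as to recover a planar-type partition, or invoking a volumetric KKM theorem such as Theorem~\ref{thm:VZ} to amplify the number of nearby intersecting sets into the desired linear fraction.
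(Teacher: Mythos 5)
This statement is an open conjecture: the paper gives no proof of it, noting only that B\'ar\'any \cite{barany} settled the special case where the sets are cylinders, so there is no proof of record to compare yours against. You correctly treat it as open, and your assessment is sound and consistent with the paper's own commentary: the codimension-two obstruction you identify (lines in $\R^3$ attached to a common point $x_0$ do not partition space, so the KKM conclusion yields no clean combinatorial structure from which a large transversal can be read off) is precisely the difficulty flagged in Section~\ref{sec:method} and in the bullet points opening the piercing section, and your observation that pairwise intersection plus one-dimensional Helly gives a stabbing hyperplane orthogonal to every direction is correct. Your proposed program --- modeling candidate lines by tuples of boundary points as in the proof of Theorem~\ref{thm:t(4)}, encoding the fractional conclusion via sparse covers and Theorem~\ref{thm:sparse-komiya}, and restricting to lines in a controlled pencil of stabbing planes to recover a planar-type partition --- is a plausible line of attack, but be clear that none of it constitutes progress yet: the crux you name (a configuration polytope simple enough for the KKM conclusion to be strong, yet faithful enough that intersecting cover sets certify a common line) is exactly where the method has so far failed in $\R^3$, and no Step~4 matching argument is currently known in this setting.
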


 B\'ar\'any \cite{barany} gave a positive answer when the family consists of  cylinders.

% \begin{figure}
% \begin{center}
%     \begin{tikzpicture}[scale=.9]
%     \draw (0,0) circle (3cm);

% \filldraw [black] (100:3cm) circle (2pt) node[above] {$U$};
% \filldraw [black] (260:3cm) circle (2pt) node[below] {$D$};
% \filldraw [black] ($(100:3cm)! 0.5!(260:3cm)$) circle (2pt) node[right] {$m(x)$};
% \filldraw [black] (150:3cm) circle (2pt) node[above left] {$p(x)$};
% \filldraw [black] (220:3cm) circle (2pt) node[below left] {$q(x)$};

% \draw (150:3cm) -- ($(100:3cm)! 0.5!(260:3cm)$);
% \draw (220:3cm) -- ($(100:3cm)! 0.5!(260:3cm)$);
% \draw (100:3cm) -- (260:3cm);

% \draw[thick,red,->] ([shift=(103:3.1cm)]0,0) arc  (103:147:3.1cm);
% \filldraw [red] (120:3.35cm) node[left] {$\frac{x_2}{x_2 + x_3+x_4}$};

% \draw[thick,blue,->] ([shift=(103:3.2cm)]0,0) arc  (103:217:3.2cm);
% \filldraw [blue] (180:3.2cm) node[left] {$\frac{x_2+x_3}{x_2+x_3+x_4}$};

% \draw[|-|] (-.25cm,3.2cm) -- (3cm,3.2cm);
% \filldraw [] (1.4cm,3.2cm) node[above] {$x_1$};

% \filldraw [] (0:1.5cm) node[below] {$R^1_x$};
% \filldraw [] (170:1.85cm) node[below] {$R^3_x$};
% \filldraw [] (120:2.3cm) node[below] {$R^2_x$};
% \filldraw [] (230:1.7cm) node[below] {$R^4_x$};

% \end{tikzpicture}
% \end{center}
% \caption{An example of another way to configure four regions in the plane using $\Delta_3$.}
%     \label{figure2}
% \end{figure}

\subsection{Piercing convex sets with hyperplanes}\label{sec:hyperplanes}

Another type of piercing problems that has received a significant attention concerns piercing families of convex sets with hyperplanes. The main result in this area is due to Alon and Kalai \cite{alon1995bounding}, which serves as an analogous to the aforementioned $(p,q)$ theorem. Specifically, they proved that for every integers $p\ge q \ge d+1$ there is a constant $C=C(p,q;d)$ such that if a finite family $\F$ of convex sets in $\mathbb{R}^d$ satisfies the property that for every $p$ sets in the family, some $q$ of them can be pierced by a hyperplane, then there are $C$ hyperplanes whose union intersects each set in $\F$. 

Here we describe a general result on hypergraph piercing the was (essentially) proved recently using the KKM method by Sober\'on and Yu \cite{soberon2023}. It concerns 
the following notion of \textit{$\Delta$-space}. 

\begin{definition}[Sober\'on - Yu 2023 \cite{soberon2023}]
    A set  $\mathcal{H}$  of partitions of $\mathbb{R}^d$ into $n$ parts is  a \textit{$\Delta$-space} if there exists a map $R:\Delta_{n-1} \rightarrow \mathcal{H}$, $x\mapsto (C_1(x),\dots,C_n(x))$, such that for every $x=(x_1,\dots,x_n)\in \Delta_{n-1}$ and $i\in [n]$ the following holds:
    \begin{enumerate}
        \item if  $x_i = 0$ then  $C_i(x)$ has Lebesgue measure zero, and
        \item if $\mu$ is a finite measure absolutely continuous with respect to the Lebesgue measure,  then $x \mapsto \mu(C_i(x))$ is a continuous map from $\Delta_{n-1}$ to $\mathbb{R}$.
    \end{enumerate}
\end{definition}

A $\Delta$-space is a general description of a space to which that one can apply the KKM method. For example, the partitions of $\mathbb{R}$ and $\mathbb{R}^2$, in the proofs of these Theorems \ref{gallai} and \ref{thm:t(4)} respectively,  are $\Delta$-spaces.

A natural class of $\Delta$-spaces, which was introduced in \cite{soberon2023}, is the class of  {\em spaces of nested hyperplane partitions}, which we describe here. First, given a nonzero vector $v\in \mathbb{R}^d$ and $\alpha\in [0,1]$, define  two halfspaces
\begin{align*}
    &H^+(\alpha,v) = \left\{ y\in \mathbb{R}^d \mid \langle y, v \rangle \geq \frac{2\alpha -1}{1- |2\alpha -1|} \right\}\\
    &H^-(\alpha,v) = \left\{ y\in \mathbb{R}^d \mid \langle y, v \rangle \leq \frac{2\alpha -1}{1- |2\alpha -1|} \right\}.
\end{align*}
When $\alpha = 0$ and $\alpha = 1$, we take $H^+(\alpha,v) = \emptyset$ and $H^+(\alpha,v) = \mathbb{R}^d$, respectively, and vice versa for $H^-(\alpha,v)$.

Now, for integers $d\ge 1, n\ge 2$ let $v_1,\dots,v_{n-1}\in \R^d$ be vectors fixed in advance. The nested hyperplane partition of $\R^d$ corresponding to $v_1,\dots,v_{n-1}$ is modeled by  $\Delta_{n-1}$ as follows. 
For $n=2$, each point  $x=(x_1,x_2) \in \Delta_{1}$ corresponds to the convex partition $(C_1(x),C_2(x))$  of $\mathbb{R}^d$, where $C_1(x) = H^+(x_1,v)$ and $C_2(x) = H^-(x_1,v)$.
Suppose now $n\geq 3$, and let $x = (x_1,\dots,x_n) \in \Delta_{n-1}$. If $x_n=1$ then $x$ corresponds to the partition $(\emptyset,\dots,\emptyset, \mathbb{R}^d)$. Otherwise,  consider the  partition $(C_1'(x'),\dots,C_{n-1}'(x'))$ obtained from the vectors $v_1,\dots,v_{n-2}$ and the point $x'=\frac{(x_1,\dots,x_{n-1})}{\sum_{i=1}^{n-1} x_i} \in \Delta_{n-2}$. We now define the partition $(C_1(x),\dots,C_n(x))$ corresponding to $x$ by $C_i(x) = C_i'(x') \cap H^-(x_n,v_{n-1})$ for $i\leq n-1$ and $C_n = H^+(x_n,v_{n-1})$. Note that $C_i = \emptyset $ when $x_i = 0$.

Sober\'on and Yu \cite{soberon2023} used the KKM method to prove a fair division result concerning nested hyperplane partitions. Their proof ideas can be adopted to prove the following hypergraph piercing theorem.

\begin{theorem}
    Let $\F$ be a finite family of convex sets in $\mathbb{R}^d$. Let $v_1,\dots,v_{n-1}$ be vectors in $\R^d$, and for every $x\in \Delta_{n-1}$ let $(C_1(x),\dots,C_n(x))$ the corresponding nested hyperplane partition.
    If for every point $x\in \Delta_{n-1}$, there is  $i\in [n]$ such that the interior of $C_i(x)$   does not contain a set in $\F$, then there are $n-1$ hyperplanes orthogonal to $v_1,\dots,v_{n-1}$ whose union intersects every set in $\F$.
\end{theorem}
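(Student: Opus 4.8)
The plan is to recognize this statement as a direct application of the \emph{dual} KKM theorem (Theorem~\ref{dualkkm}), rather than the primal KKM theorem used for Gallai's theorem. For each $i\in[n]$ define
\[
A_i = \{x \in \Delta_{n-1} \mid \text{the interior of } C_i(x) \text{ contains no member of } \mathcal{F}\}.
\]
The conclusion we are after is exactly that $\bigcap_{i=1}^{n} A_i \neq \emptyset$: if $x^\ast$ lies in every $A_i$, then no (compact) convex member of $\mathcal{F}$ is contained in the interior of a single part of the partition $(C_1(x^\ast),\dots,C_n(x^\ast))$, and since each set in $\mathcal{F}$ is connected while the open interiors are pairwise disjoint, every $F\in\mathcal{F}$ must meet the union of the $n-1$ bounding hyperplanes. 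Those hyperplanes are orthogonal to $v_1,\dots,v_{n-1}$ by construction, so they pierce all of $\mathcal{F}$, which is what we want.

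First I would verify the two hypotheses of Theorem~\ref{dualkkm}. Condition (a), that $\bigcup_i A_i = \Delta_{n-1}$, is precisely the hypothesis of the theorem: for every $x$ some part $C_i(x)$ has interior containing no set of $\mathcal{F}$, i.e. $x\in A_i$. Condition (b), that $x_i=0$ forces $x\in A_i$, follows immediately from the defining property of nested hyperplane partitions that $C_i(x)=\emptyset$ whenever $x_i=0$, since an empty interior contains no set. The remaining point is the topological regularity required by Theorem~\ref{dualkkm}: I would show each $A_i$ is closed. Equivalently, $A_i$ is the set of $x$ for which every $F\in\mathcal{F}$ meets the closed set $\mathbb{R}^d \setminus \mathrm{int}(C_i(x))$; since each $F$ is compact and the parts $C_i(x)$ (intersections of halfspaces whose positions depend continuously on $x$ through the recursive construction) vary continuously, the condition that the compact set $F$ meets this continuously varying closed set is closed in $x$, and a finite intersection over $F\in\mathcal{F}$ of such conditions is again closed.

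With both hypotheses checked, Theorem~\ref{dualkkm} yields a point $x^\ast\in\bigcap_{i=1}^{n} A_i$, and the interpretation in the first paragraph finishes the proof.

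The hard part will be the closedness/continuity argument, and in particular controlling the degenerate configurations built into the construction: when a coordinate $x_i$ or $x_n$ reaches $0$ or $1$, a bounding hyperplane recedes to infinity and a part $C_i(x)$ collapses to the empty set or swells to a halfspace. One must confirm that $x\mapsto C_i(x)$ remains continuous through these limits so that the condition $F\subseteq\mathrm{int}(C_i(x))$ stays open even on the boundary of $\Delta_{n-1}$; the continuity-of-measure axiom built into the definition of a $\Delta$-space is the natural tool to invoke here. The conceptual step, by contrast, is short but essential: realizing that condition (b) of the dual theorem matches exactly the emptiness property $C_i(x)=\emptyset$ when $x_i=0$, which is precisely the obstruction that prevents the primal KKM covering condition $\sigma\subseteq\bigcup_{i\in\sigma}A_i$ from holding, is what selects the dual KKM theorem as the correct black box.
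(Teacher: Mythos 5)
Your proposal is correct and is essentially the argument the paper intends: the paper states this theorem without proof (it says only that the Sober\'on--Yu proof ideas can be adapted via the KKM method of Section~\ref{sec:method}), and your application of the dual KKM theorem (Theorem~\ref{dualkkm}) is just the complemented, contradiction-free form of the paper's standard Steps 1--3 --- setting $B_i=\Delta_{n-1}\setminus A_i=\{x\mid \mathrm{int}\,C_i(x) \text{ contains a member of } \F\}$, your conditions (a) and (b) say exactly that the $B_i$ would form an open KKM cover if no $x$ produced piercing hyperplanes, which is precisely how the paper runs its other applications (Theorems~\ref{gallai}, \ref{thm:t(4)}, \ref{thm:mass}). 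One caveat on your closing paragraph: closedness of the $A_i$ requires the members of $\F$ to be compact (make your parenthetical ``(compact)'' an explicit standing assumption) and follows from your earlier direct argument --- continuity of the finitely many halfspace thresholds in $x$, checked also at the degenerate parameters such as $x_n=1$ where the renormalization $x'=(x_1,\dots,x_{n-1})/\sum_{i<n}x_i$ is undefined but the cutting halfspace collapses --- and \emph{not} from the $\Delta$-space measure-continuity axiom, which controls $x\mapsto\mu(C_i(x))$ for absolutely continuous measures and says nothing about the containment condition $F\subseteq\mathrm{int}\,C_i(x)$.
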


\subsection{Other piercing problems}
In traditional piercing problems  the dimension of the piercing spaces in the condition of the theorem is the same as the dimension of the piercing spaces in the conclusion of the theorem. For example, in Theorem \ref{d-intervals} the condition is that  every $\nu+1$ sets in the family are pierced by a point, and the conclusion is that there are $(d^2-d+1)\nu$ points piercing the entire family. Similarly, in Theorem \ref{thm:T3} the condition is that  every $3$ sets in the family are pierced by a line, and the conclusion is that the family is pierced by $3$ lines. However, one can also mix and match. 

One interesting question in this direction is how many hyperplanes are needed to pierce a family of convex sets in $\R^d$ with the $(p,q)$ property.
If $\F$ is a finite family of convex sets in $\R^d$ with the $(p+1,2)$ property, then $\F$ is pierced by $p$ hyperplanes; this follows from projecting $\F$ onto a line and using Theorem \ref{gallai}. However, this bound is far from being tight. In \cite{Z2}  the KKM method was used to improve this bound: 

\begin{theorem}[Zerbib 2024 \cite{Z2}]\label{main2}
For every $d\ge 2$, if $\F$ is a finite family of compact convex sets in $\R^d$ with the $(p+1,2)$ property then $\F$ is pierced by $\lfloor \frac{p}{2} \rfloor+1$ hyperplanes.  Moreover, when $d=2$ this bound is tight.
\end{theorem}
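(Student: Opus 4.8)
The plan is to argue by contraposition inside the KKM framework of Section~\ref{sec:method}: assuming $\F$ cannot be pierced by $m:=\lfloor p/2\rfloor+1$ hyperplanes, I will manufacture a matching of size $p+1$, contradicting the $(p+1,2)$ property (which is exactly $\nu(\F)\le p$). The governing arithmetic is that an arrangement of $m$ \emph{concurrent} hyperplanes cuts space into $2m$ open wedges, and $m=\lfloor p/2\rfloor+1$ is precisely the least integer with $2m\ge p+1$; thus $2m$ pairwise disjoint wedges, each containing a member of $\F$, will yield the desired matching. I would first reduce to the plane. Fix a $(d-2)$-flat $L\subset\R^d$ and consider only hyperplanes containing $L$; projecting along $L$ onto the orthogonal plane $W\cong\R^2$ sends such hyperplanes to lines and sends $\F$ to a family $\pi(\F)$ of compact convex sets in $W$. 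Since $\pi(A)\cap\pi(B)=\emptyset$ forces $A\cap B=\emptyset$, a matching in $\pi(\F)$ lifts to a matching in $\F$, so $\nu(\pi(\F))\le\nu(\F)\le p$, while a line meeting $\pi(F)$ lifts to the hyperplane $\ell+L$ meeting $F$. Hence $m$ lines piercing $\pi(\F)$ lift to $m$ hyperplanes piercing $\F$, and it suffices to treat $\R^2$.

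For the planar core I would model the configuration space of $m$ concurrent lines by $\Delta_{2m-1}$, after rescaling so that every set lies in a fixed disk $C$. The goal is a continuous assignment $x\mapsto(\ell_1(x),\dots,\ell_m(x))$ of $m$ concurrent lines whose $2m$ open sectors $S_1(x),\dots,S_{2m}(x)$, listed cyclically, satisfy the degeneracy condition that $\mathrm{int}\,S_i(x)=\emptyset$ whenever $x_i=0$ (mirroring the empty-interval condition in the proof of Theorem~\ref{gallai}). Granting such a map, set
\[
A_i=\{x\in\Delta_{2m-1}\mid \text{some }F\in\F\text{ lies in }\mathrm{int}\,S_i(x)\}.
\]
Because the members of $\F$ are compact and the family is finite, each $A_i$ is open; the degeneracy condition gives $x_i=0\Rightarrow x\notin A_i$; and if $\F$ is not pierced by $\ell_1(x),\dots,\ell_m(x)$, then some $F$ avoids all of them and hence lies in the interior of a single sector, so $\bigcup_i A_i=\Delta_{2m-1}$. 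Exactly as in Theorem~\ref{gallai}, these two facts make $\{A_1,\dots,A_{2m}\}$ a KKM cover, so by Theorem~\ref{thm:kkm} there is a point $x^\ast\in\bigcap_{i=1}^{2m}A_i$. The witnessing sets sit in pairwise disjoint sectors, hence form a matching of size $2m\ge p+1$, the desired contradiction.

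The main obstacle is constructing the parametrizing map, and the difficulty is genuine. An arrangement of $m$ lines produces exactly $2m$ regions and no bounded cells \emph{only} when the lines are concurrent, but concurrency forces opposite sectors to share equal angular width, so the $2m$ widths cannot simply be taken to be the free coordinates $x_1,\dots,x_{2m}$ (this is why the two-line construction of Theorem~\ref{thm:t(4)}, in which no bounded cell appears, does not extend verbatim). Overcoming this requires a more flexible placement of the common point and of the sector boundaries on $C$, in the spirit of the arc-proportion construction of Theorem~\ref{thm:t(4)}, realizing the degeneracy pattern $x_i=0\Rightarrow\mathrm{int}\,S_i(x)=\emptyset$ while keeping the lines straight and the map continuous up to the boundary of $\Delta_{2m-1}$; this is the technical heart of the argument.

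Finally, for tightness when $d=2$ I would exhibit a family with $\nu=p$ that needs $\lfloor p/2\rfloor+1$ lines. For odd $p$, a set of $p$ points in general position works: the points are pairwise disjoint, so $\nu=p$, while no line covers more than two of them, so at least $\lceil p/2\rceil=\lfloor p/2\rfloor+1$ lines are required. For even $p$ one replaces points by a short cyclic arrangement of convex sets, chosen so that the disjointness graph has independence number $p$ but no $\lfloor p/2\rfloor$ lines meet every set; this lower-bound direction is elementary compared with the KKM argument above.
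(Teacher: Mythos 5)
Your overall scaffolding --- projecting along a $(d-2)$-flat to reduce to the plane, building a KKM cover on $\Delta_{2m-1}$ from $2m$ regions determined by $m$ lines, and extracting a matching of size $2m\ge p+1$ to contradict $\nu(\F)\le p$ --- is exactly the strategy of \cite{Z2}, and your reduction and Steps 2--3 are fine. The genuine gap is the one you flag yourself: the parametrization in Step 1 is never constructed, and your insistence on \emph{concurrent} lines makes it unconstructible, not merely delicate. Indeed, at a vertex $e_i$ of $\Delta_{2m-1}$ your degeneracy condition forces every sector except $S_i$ to have empty interior, so $S_i$ would have to contain the whole complement of the lines; but the complement of even a single full line already has two unbounded components, so no arrangement of $m$ genuine concurrent lines can realize the required pattern. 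The missing idea, used in \cite{Z2} and visible in this survey in Step 1 of the proof of Theorem \ref{thm:mass}, is to abandon both concurrency and the identification of regions with cells of the arrangement: let $x$ place $2m$ points $z_1,\dots,z_{2m}$ on the bounding circle by cumulative arc length (so $x_i=0$ exactly when $z_{i-1}(x)=z_i(x)$), join $z_i$ to $z_{i+m}$ to get $m$ chords --- which, crucially, may degenerate to points, unlike lines --- and define the regions by nested halfplane intersections, $C_i(x)=\bigl(\bigcap_{j=0}^{i-1}H_j^+(x)\bigr)\cap H_i^-(x)\cap D$, with the analogous definition for $i>m$. These $2m$ convex regions cover the disk and have pairwise disjoint interiors even though the chords cross and create bounded cells, and $C_i(x)$ is contained in a line when $x_i=0$; a compact connected set avoiding all the chords lies in the interior of a single $C_i(x)$, so your sets $A_i$ do form a KKM cover and Theorem \ref{thm:kkm} finishes exactly as you intend. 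In short, the ``technical heart'' you defer is not something to be massaged around concurrency --- concurrency must be dropped, and with the chord construction the planar core closes.

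Your tightness argument is also only half there: the $p$ points in general position settle odd $p$, but for even $p$ (where $\lfloor p/2\rfloor+1=p/2+1$, while $p$ points are pierced by $p/2$ lines) you offer only an unspecified ``cyclic arrangement,'' which is precisely the case that needs a construction. The paper's example covers all $p$ uniformly: take $\F$ to be the family of edges of a regular $(2p+1)$-gon. Then $\nu(\F)=p$ (a maximum matching in the odd cycle $C_{2p+1}$), so the $(p+1,2)$ property holds, while any line meets at most $4$ edges: it crosses the boundary of the convex polygon in at most two points, each lying on at most two edges, and a line containing an edge meets only $3$. Hence $\lfloor p/2\rfloor$ lines pierce at most $4\lfloor p/2\rfloor\le 2p<2p+1$ edges, so $\lfloor p/2\rfloor+1$ lines are necessary for every $p$.
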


The bound in Theorem \ref{main2} is tight for $d=2$ by taking $\F$ to be  the family of edges of a regular $(2p+1)$-gon. By replacing the KKM theorem with the colorful KKM theorem, a colorful version of this result is proved in \cite{Z2}. We also note that Theorem \ref{main2} is a special case of Conjecture 4.1 posed in \cite{mcginnis2}.

\begin{question}
    Is it true that a finite family of compact convex sets in $\R^d$ with the $(p,2)$ property is pierced by $\frac{p}{d} + O(1)$ hyperplanes? 
\end{question}

In \cite{fricketat} a special case of this question was verified: it was proved there that a family of convex polytopes in $\R^3$ with the $(8,2)$ property can be pierced by 3 hyperplanes.

\section{Mass partition}\label{sec:mass}
Mass partition problems concern the partitions we can induce on a measure, or
a family of measures, or finite sets of points in Euclidean spaces by dividing the
ambient space into pieces, while satisfying some additional geometric
properties. This area of research has many connections with topology, discrete geometry, and computer science. For an excellent survey on mass partition problems and their history and applications, see \cite{RPS}. 

Topological methods have been extensively applied to mass partition problems. A famous example is the ham sandwich theorem, which states that $d$  measures
in $\R^d$ can be simultaneously split into two equal parts each, using a single hyperplane. Another example is necklace splitting theorem of Alon \cite{alonneck}, stating that a necklace with $kn$   beads, with $ka_{i}$ beads of  color $i$,
can be partitioned into $k$ parts
$a_{i}$ beads of color $i$. Both  theorems where proved using the topological Borsuk-Ulam theorem \cite{Borsuk}. 

In this survey we will focus on mass partition results that were proved using the KKM method, or open problems that may be amenable to the method. 
%We  demonstrate the method by proving a theorem that extends   a result of Aurenhammer, Hoffmann, and  Aronov \cite{AHA}, as well as a theorem by Sober\'on and Yu (Theorem 1.3 in \cite{soberon2023}):
We  demonstrate the method by proving the following mass partition theorem. In \cite{soberon2023} Sober\'on and Yu proved the special case $k=2$.  

\begin{theorem}\label{thm:mass}
    Let $\mu_1, \dots, \mu_{2k}$ be absolutely
continuous probability measures in $\R^2$ with compact support, and let $\alpha_1,\dots, \alpha_{2k}$ be positive reals so that $\alpha_1+\dots+ \alpha_{2k} =1$. Then there is a partition of the plane into $2k$ convex regions $Q_1,\dots, Q_{2k}$ using $k$ lines, and such that $Q_1,\dots, Q_{2k}$  are ordered in a counterclockwise order, and a permutation $\pi: [2k] \rightarrow [2k]$, such that $\mu_{\pi(i)}(Q_i) \geq \alpha_i$ for all $i\in [2k]$.
\end{theorem}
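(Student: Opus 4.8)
The plan is to run the KKM method exactly as in the proofs of Gallai's theorem (Theorem \ref{gallai}) and Theorem \ref{thm:t(4)}, using the colorful KKM theorem (Theorem \ref{colkkm}) as the black box and taking $2k$ in place of $k$. First I would model the configuration space of admissible partitions by a simplex: I want a continuous map $R\colon \Delta_{2k-1}\to \mathcal H$, $x\mapsto (C_1(x),\dots,C_{2k}(x))$, whose image is a \emph{$\Delta$-space} of partitions of $\R^2$ into $2k$ convex regions that are cut out by $k$ lines and listed counterclockwise, and with the crucial property that $C_i(x)$ is a Lebesgue-null set whenever $x_i=0$. Granting such a map, for each measure index $j\in[2k]$ I would set
$$A^j_i=\{x\in\Delta_{2k-1}\mid \mu_j(C_i(x))\ge \alpha_i\},\qquad i\in[2k].$$
Since each $\mu_j$ is absolutely continuous, the maps $x\mapsto \mu_j(C_i(x))$ are continuous by the defining property of a $\Delta$-space, so every $A^j_i$ is closed.

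Next I would verify that for each fixed color $j$ the family $\{A^j_1,\dots,A^j_{2k}\}$ is a KKM cover of $\Delta_{2k-1}$, by the same two-point argument used for Gallai. For the covering $\Delta_{2k-1}=\bigcup_i A^j_i$: because the $C_i(x)$ partition the plane and $\mu_j$ is a probability measure, $\sum_{i=1}^{2k}\mu_j(C_i(x))=1=\sum_{i=1}^{2k}\alpha_i$, so $\mu_j(C_i(x))\ge\alpha_i$ holds for at least one $i$. For the boundary condition: if $x_i=0$ then $C_i(x)$ is null, hence $\mu_j(C_i(x))=0<\alpha_i$ and $x\notin A^j_i$. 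Exactly as in Theorem \ref{gallai}, these two facts yield $\sigma\subseteq\bigcup_{i\in\sigma}A^j_i$ for every face $\sigma$. Applying Theorem \ref{colkkm} with $2k$ colors then produces a permutation $\pi\in S_{2k}$ and a point $x^*\in\bigcap_{i=1}^{2k}A^i_{\pi(i)}$, that is, $\mu_i(C_{\pi(i)}(x^*))\ge\alpha_{\pi(i)}$ for every $i$. Writing $Q_m=C_m(x^*)$ and $\rho=\pi^{-1}$, this reads $\mu_{\rho(m)}(Q_m)\ge\alpha_m$ for all $m\in[2k]$, with $\rho$ the required permutation; and the $Q_m$ are, by construction, $2k$ counterclockwise convex regions cut by $k$ lines, which is exactly the assertion.

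The entire difficulty is concentrated in the first step, namely producing the $\Delta$-space $R$. The case $k=2$ is precisely the parametrization behind Theorem \ref{thm:t(4)} (and the Sober\'on--Yu result): two chords of a circle meeting at a movable apex, where driving a coordinate $x_i$ to $0$ pushes the apex onto the bounding circle and pinches the $i$-th quadrant to a null set. The obstacle is to generalize this to a pencil of $k$ concurrent lines producing $2k$ sectors while keeping the one-coordinate-per-region collapse. The subtlety is that the $2k$ rays of such a pencil come in antipodal pairs, so the sector angles \emph{at the apex} are forced to be antipodally symmetric; an individual sector can therefore be emptied only by migrating the apex to the boundary of the common support (not by shrinking an angle in place). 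One must design the $(2k)$-parameter family so that the coordinate $x_i$ governs exactly the collapse of $C_i(x)$, so that the map is continuous up to and along every face of $\Delta_{2k-1}$, and so that the output is always genuinely $k$ lines bounding $2k$ convex cells in cyclic order.

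I expect checking these degeneration properties, the continuity at the faces of $\Delta_{2k-1}$ and the precise matching between vanishing coordinates and pinched sectors, to be the main technical work. Once the configuration space is in hand, the remaining steps are a routine instance of the KKM method, and specializing to a single repeated measure recovers the qualitative content of the $k=2$ case proved by Sober\'on and Yu.
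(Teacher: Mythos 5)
Your Steps 2 and 3 reproduce the paper's proof almost verbatim: the sets $A^j_i=\{x\in\Delta_{2k-1}\mid \mu_j(C_i(x))\ge\alpha_i\}$, closedness from absolute continuity, the covering condition from $\sum_i\mu_j(C_i(x))=1=\sum_i\alpha_i$, the boundary condition from the null-set degeneration, the application of Theorem \ref{colkkm} with $2k$ colors, and the final relabeling by $\pi^{-1}$ are all exactly what the paper does. But you have correctly diagnosed that all of the content lies in Step 1, and that step you never carry out: the configuration space $R\colon\Delta_{2k-1}\to\mathcal{H}$ is left as an acknowledged open construction ("I expect checking these degeneration properties \dots to be the main technical work"). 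In a blind proof this is a genuine gap, not a deferred verification, since without the parametrization there is no theorem.

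Moreover, the direction you sketch for filling the gap would fail. You assume the $k$ lines must form a concurrent pencil with a movable apex, and you correctly observe the resulting obstruction: the $2k$ sector angles at an apex come in antipodally equal pairs, so a single region cannot be pinched by shrinking one coordinate in place. That rigidity is precisely why no pencil-based $\Delta$-space works for $k\ge 3$ --- and the theorem never requires concurrency. The paper's construction drops it: a point $x\in\Delta_{2k-1}$ places $2k$ points $z_i(x)=f\bigl(\sum_{j=1}^i x_j\bigr)$ on the circle $S^1$ (partial sums, so the $z_i$ appear in counterclockwise order), and one takes the $k$ chords $\ell_i(x)=[z_i(x),z_{i+k}(x)]$ for $0\le i\le k-1$. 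Any two such chords cross, because their endpoints interleave on the circle, but for $k\ge 3$ the lines are in general \emph{not} concurrent. The $2k$ regions are then defined by nested halfplane intersections, $C_i(x)=\bigl(\bigcap_{j=0}^{i-1}H_j^+(x)\bigr)\cap H_i^-(x)\cap D$ for $1\le i\le k$, and analogously starting from $H_k^+(x)=H_0^-(x)$ for $k+1\le i\le 2k$; these are convex, have pairwise disjoint interiors, cover the disk, and sit in counterclockwise order along the boundary arcs. The one-coordinate-per-region collapse you wanted is immediate: $x_j=0$ forces $z_{j-1}(x)=z_j(x)$, which squeezes $C_j(x)$ into a subset of a line, hence a Lebesgue-null set, and continuity on all of $\Delta_{2k-1}$ (faces included) is automatic since the $z_i(x)$ depend continuously on $x$. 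With this parametrization in hand, the remainder of your argument goes through unchanged.
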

\begin{proof}
We apply the KKM method together with colorful KKM theorem. Let $D$ be the unit disk, and let $S^1$ be its boundary. Since the measures have compact support, we may assume that their support is contained in $D$.

\medskip
{\bf Step 1: Modeling the configuration space of relevant partitions.}

Let $f:[0,1] \rightarrow S^1$ be  defined by $f(t)=(\textrm{cos}(2\pi t), \textrm{sin}(2\pi t))$. A point $x=(x_1,\dots,x_{2k})\in \Delta_{2k-1}$ corresponds to $2k$ points $z_1,\dots,z_{2k}$ on $S^1$ given by $z_i(x)=f(\sum_{j=1}^i x_{j})$ for $0\leq i\leq 2k$ (note that $z_{2k}(x)=z_0(x)$). For $0\leq i\leq 2k-1$ let $\ell_i(x)$ be the line segment $[z_i(x),z_{i+k}(x)]$, where addition is modulo $2k$. If $z_i(x)\neq z_{i+k}(x)$, let $\ell'_i(x)$ be the line containing $\ell_i(x)$. 

For $0\leq i\leq 2k-1$, define $H_i^+(x)$ and $H_i^-(x)$ as follows. 
if $z_i(x)\neq z_{i+k}(x)$, let $H_i^+(x)$ be the closed halfspace defined by $\ell'_i(x)$ containing the counterclockwise arc going from $z_i(x)$ to $z_{i+k}(x)$, and let $H_i^-(x)$ the other closed halfspace defined by $\ell'_i(x)$. Otherwise, if $z_i(x)=z_{i+k}(x)$ then either $\sum_{j=i+1}^{i+k} x_j = 0$ or $\sum_{j=i+1}^{i+k} x_j = 1$. If $\sum_{j=i+1}^{i+k} x_j = 0$ let $H_i^+(x) = \emptyset$ and $H_i^-(x) = \mathbb{R}^2$, and if $\sum_{j=i+1}^{i+k} x_j = 1$ let $H_i^+(x) = \mathbb{R}^2$ and $H_i^-(x) = \emptyset$.

% For $1\leq i\leq 2k$,  define the region $R^i_x$ to be the open wedge bounded by the lines $\ell_{i}(x)$ and $\ell_{i-1}(x)$ and by the arc from $f_{i-1}(x)$ to $f_i(x)$. Let 
% $C^1_x = R^1_x$, and for $2\leq i\leq 2k$ let  $C^i_x$ be the closure of $R^i_x\setminus \bigcup _{j=1}^{i-1}C^j_x$. Note that $C^i_x$ is closed and  convex. 

For $x\in \Delta_{2k-1}$ and  $1\leq i\leq 2k$, define  regions $C_i(x)$ as follows. For $1\leq i\leq k$, let $C_i(x)=(\bigcap_{j=0}^{i-1}H_j^+(x)) \cap H_i^-(x) \cap D$. Similarly, for $k+1\leq i\leq 2k$, let $C_i(x)=(\bigcap_{j=k}^{i-1}H_j^+(x)) \cap H_i^-(x) \cap D$ (see Figure \ref{figure3}). Clearly, $C_i(x)$ is convex for all $i$. We claim that for all $x$, every point in the unit disk is contained in some $C_i(x)$. To see this, let $y\in D$, and assume that $y\in H_0^+(x)$. Let $1\leq i\leq k$ be the smallest index for which $y \in H_i^-(x)$ (such an $i$ exists since $y \in H_k^-(x)$). Then $y \in C_i(x)$. A similar argument holds if $y\in H_0^-(x)= H_k^+(x)$.

We claim that for every $i< j$, the interiors of  $C_i(x)$ and $C_j(x)$ are disjoint.  Indeed, assume first that $1\leq i < j\leq k$. Then $C_j(x) \subseteq H_i^+(x)$ and $C_i(x) \subseteq H_i^-(x)$ and we are done. The case where $k+1\leq i<j \leq 2k$ is similar, and if $1\leq i\leq k$ and $k+1\leq j\leq 2k$, then $C_i(x) \subseteq H_0^+(x)$ and $C_j(x)\subseteq H_0^-(x)$ and we are done.

To invoke the KKM theorem, we also need the fact that if $x_j = 0$ then $\mu_i(C_j(x)) = 0$ for all $i$. Indeed, if $x_j = 0$ then $z_{j-1}(x) = z_j(x)$. It follows that the counterclockwise arc from $z_j(x)$ to $z_{j+k}(x)$ contains both $z_{j-1}(x)$ and $z_{j-1+k}(x)$, and hence $H_j^-(x) \cap H_{j-1}^+(x) \cap D$ is either a point or line. This mean that $C_j(x)$ is contained in a line and the claim follows.

\medskip
{\bf Step 2: Obtaining KKM covers.}
For $i,j\in [2k]$ let $A^i_j\subseteq \Delta_{2k-1}$  be the set of points $x$ such that $\mu_i(C_j(x)) \geq \alpha_j$. Then $A^i_j$ is closed, and by the above claim, if  $x_j=0$ then $x\notin A^i_j$ for all $j$. Additionally, since $\mu_i(\mathbb{R}^2) =1$, we have that $\Delta_{2k-1} \subset \bigcup_{j=1}^{2k} A^i_j$ for all $i$. It follows that the conditions of the colorful KKM theorem  (Theorem \ref{thm:kkm}) are satisfied.

\medskip
{\bf Step 3: Applying the KKM-type theorem.}
By Theorem \ref{thm:kkm} there exists a permutation $\pi:[2k] \rightarrow [2k]$ and a point $x_0\in \bigcap_{i=1}^{2k} A_{\pi(i)}^i$. Taking $Q_i = C_i(x_0)$, the proof is concluded.
\end{proof}

\begin{center}
\begin{figure}
\begin{center}
    \includegraphics[scale=.5]{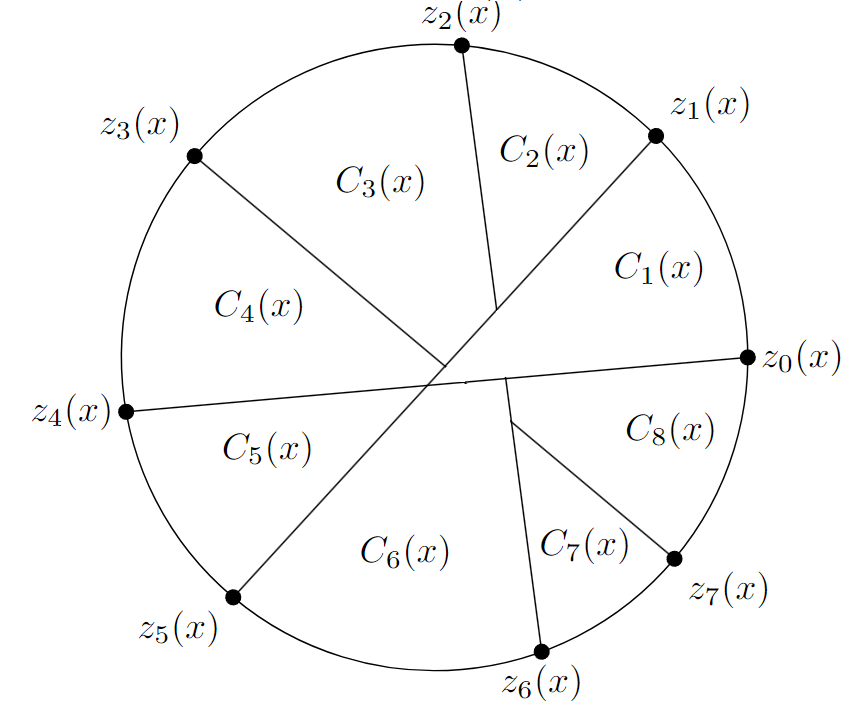}
\end{center}
\caption{The regions $C_i(x)$ in the proof of Theorem \ref{thm:mass}}
    \label{figure3}
\end{figure}
\end{center}

Note that in the above theorem, the convex partition is defined by lines, and moreover, we can also fix one pre-chosen point on one of those lines. Alternatively, we may fix in advance the direction of one of the lines using similar ideas from Section \ref{sec:LinePiercing}. 
This raises the following question: can we use the KKM method to   fix an {\em angle} between two of the lines? An open problem in this direction is the following conjecture of Gr\"unbaum \cite{BG}, which was verified in \cite{AJMR} in certain special cases:

\begin{conjecture}[Gr\"unbaum \cite{BG}]
Let $P$ be a convex body in the plane with area $1$. Then, for
any $t \in [0, 1/4]$ there exists a pair of orthogonal lines that partition $P$ into four
pieces of areas $t, t,(1/2 - t),(1/2 - t)$ in clockwise order.
\end{conjecture}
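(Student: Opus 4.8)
The plan is to push the KKM proof of Theorem~\ref{thm:mass} in the case $k=2$ into the rigid world of \emph{orthogonal} cutting lines. After a similarity transformation I would place $P$ inside the unit disk $D$ and let $\mu$ be the area measure of $P$ normalized to total mass $1$. A point $x=(x_1,x_2,x_3,x_4)\in\Delta_3$ determines, exactly as in Theorem~\ref{thm:mass}, four points $z_0,\dots,z_3$ on the boundary circle and the two chords $\ell_0=[z_0,z_2]$ and $\ell_1=[z_1,z_3]$, which cut $D$ into four convex regions $C_1(x),\dots,C_4(x)$ in cyclic order with $C_j(x)$ a null set whenever $x_j=0$. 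Setting $A_j=\{x\in\Delta_3:\mu(C_j(x))\ge\alpha_j\}$ with $(\alpha_1,\alpha_2,\alpha_3,\alpha_4)=(t,t,\tfrac12-t,\tfrac12-t)$, the same verification as in Theorem~\ref{thm:mass} shows $\{A_1,\dots,A_4\}$ is a KKM cover; since $\sum_j\alpha_j=1=\sum_j\mu(C_j(x))$, any point of $\bigcap_j A_j$ yields \emph{exact} areas $\mu(C_j)=\alpha_j$, which is what the conjecture asks for (the clockwise ordering is built into the labeling of the regions).

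The one new demand is that the two chords be perpendicular. By the inscribed-angle theorem, the crossing chords $[z_0,z_2]$ and $[z_1,z_3]$ meet at a right angle precisely when the two arcs they intercept form a semicircle, which in the coordinates above is the single linear condition $x_1+x_3=\tfrac12$ (equivalently $x_2+x_4=\tfrac12$). Hence the configuration space of \emph{orthogonal} partitions is not $\Delta_3$ but the two-dimensional slice $T=\{x\in\Delta_3:x_1+x_3=\tfrac12\}$, which is in fact a square $T\cong[0,\tfrac12]^2$ via $(x_1,x_2)\mapsto(x_1,x_2,\tfrac12-x_1,\tfrac12-x_2)$. I would therefore try to replace the KKM theorem on $\Delta_3$ by a polytopal KKMS argument on this square, in the spirit of Komiya's theorem and Theorem~\ref{thm:col-komiya}, re-indexing the sets $A_j$ by the faces of $T$ and hoping the covering condition survives the restriction.

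The hard part --- and, I suspect, the reason the conjecture is still open --- is exactly this restriction. The vertices of $\Delta_3$, where a single coordinate equals $1$ and three of the four regions degenerate, do \emph{not} lie on $T$, so the simplex boundary condition that drives the KKM method is lost; on the square $T$ the degeneracies are milder (at each vertex of $T$ two adjacent coordinates vanish and two equal $\tfrac12$), but then matching the four region indices to the faces of $T$ so that the KKMS covering condition genuinely holds is delicate, and even when it does, the KKMS theorem only forces the sets attached to a collection of \emph{balanced} faces to meet, which need not pin down all four prescribed areas at once. One then faces a Step~4-type matching problem with no evident solution. The same asymmetry is visible in the elementary reformulation: for each direction $\theta$ take the line cutting off an area-$2t$ cap $R(\theta)$ and let $g(\theta)$ measure the gap between the perpendicular bisectors of $R(\theta)$ and of its complement; a zero of $g$ solves the problem, and the clean antipodal relation $g(\theta+\pi)=-g(\theta)$ forcing such a zero holds only when $2t=\tfrac12$, i.e.\ in the equipartition case $t=\tfrac14$. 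Breaking this symmetry barrier for general $t$, whether by finding a base polytope whose face structure natively encodes orthogonality or by a genuinely new fixed-point argument on the space of perpendicular line pairs, is the central obstacle.
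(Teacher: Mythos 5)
You are attempting a statement for which the paper offers no proof at all: this is Gr\"unbaum's conjecture, which the survey explicitly records as open (only special cases are verified in \cite{AJMR}), so there is no hidden argument you failed to reconstruct. Your proposal, to its credit, does not pretend otherwise. The parts you do assert are sound: applying Theorem~\ref{thm:mass} with $k=2$ and a single measure $\mu$ repeated four times does give \emph{exact} areas, since the sets $A_j$ are closed, $\sum_j \alpha_j = 1 = \sum_j \mu(C_j(x))$, and the region boundaries are $\mu$-null (with the harmless caveat that for $t=0$ the covering condition $x_j=0 \Rightarrow x\notin A_j$ breaks for $\alpha_j=0$, so that boundary case needs a direct argument); the inscribed-angle computation is correct, as the angle between the chords $[z_0,z_2]$ and $[z_1,z_3]$ is half the sum of the intercepted arcs, so orthogonality is exactly the slice $x_1+x_3=\tfrac12$; and the identification of that slice with a square whose vertices kill two cyclically adjacent coordinates is right.

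The gap you name is genuine, and it is worth making it one notch more concrete than you do. On the square $T$ the four sets pair off against opposite facets ($x_1=0$ versus $x_3=0$, i.e.\ $x_1=\tfrac12$, and likewise for $A_2,A_4$), so the natural replacement for KKM is a Poincar\'e--Miranda-type condition on functions such as $f_1(x)=\bigl(\mu(C_1(x))-\alpha_1\bigr)-\bigl(\mu(C_3(x))-\alpha_3\bigr)$; but on the facet $x_1=0$ one only knows $\mu(C_1(x))=0$, which gives no lower bound on $\mu(C_3(x))$, so $f_1$ has no definite sign there and neither a cubical KKM cover nor a Miranda argument follows from the hypotheses. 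This is the same failure you observe in the elementary reformulation: for $t\neq\tfrac14$ the $2t$-cap in direction $\theta+\pi$ is not the complement of the $2t$-cap in direction $\theta$ (their areas are $2t$ and $1-2t$), so the antipodal relation $g(\theta+\pi)=-g(\theta)$ and the intermediate-value zero are lost. Your text is therefore best read as a correct account of why the KKM method, as deployed in Theorem~\ref{thm:mass}, does not settle the conjecture --- not as a proof, which at present does not exist.
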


A problem of Gr\"unbaum \cite{grunbaum1960partitions} asks if there exists equipartitions of a measure by $d$ hyperplane hyperplanes in $\R^d$. The answer to the corresponding question in $\R^d$ for $d\le 3$ is known to be positive, and in fact the case $d=2$ can be proved using the KKM theorem and is a special case of Theorem \ref{thm:mass}. As mentioned above, applying the same approach in higher dimension is more challenging. The case $d=3$ was proven in \cite{hadwiger1966simultane, yao1989partitioning}, and case for $d\geq 5$ was found to be false \cite{avis1984non-partitionable}. This leaves the
following question open.
 \begin{problem}[Gr\"unbaum's measure partition problem \cite{grunbaum1960partitions}]
     Let $\mu$ be a finite absolutely continuous measure in $\R^4$.
Do there
 always exist four hyperplanes that divide $\R^4$
into 16 parts of equal $\mu$-measure?
\end{problem}

A more general problem in this direction is the Gr\"unbaum–Hadwiger–Ramos problem. 

\begin{problem}[The Gr\"unbaum–Hadwiger–Ramos problem]
    Determine all the triples $(d, k, m)$ of positive integers so that the
following  holds: for any $m$ absolutely continuous finite measures in $\R^d$,
there exist $k$  hyperplanes dividing $\R^d$
into $2^k$ parts of equal size in each of the
$m$ measures.
\end{problem} 

The conjectured answer to this problem, due to Ramos \cite{ramos1996equipartition}, is that $(d,k,m)$ is such a triple if $d\geq \left(\frac{2^k-1}{k}\right)m$.
The best known bounds for the problem are obtained in \cite{MVZ}. See also \cite{BFHZ, RPS} for surveys of this problem.

Another potentially interesting question  related to Theorem \ref{thm:mass} is the following. The theorem asserts that there exists a permutation $\pi:[2k]\to [2k]$ and a partition of the plane using $k$ lines into $2k$ convex regions $Q_i$, such that $\mu_{\pi(i)}(Q_i) = \alpha_i$. Suppose we wanted to fix the permutation $\pi$ to be the identity. Can we prove a similar result if we allow the regions not to be convex and the number of lines to be larger? This is the content of the following conjecture, which if true, would generalize a result proved in \cite{CNS} for measures in $\R$ (see also Section \ref{sec:disdiv}).

\begin{conjecture}
There exists a positive integer $r$ with the following property.   Let $\mu_1, \dots, \mu_{k}$ be absolutely
continuous probability measures in the plane, and let $\alpha_1,\dots, \alpha_{k}$ be  positive reals  summing to 1. Then there is a partition of the plane into $2k$ (not necessarily connected) regions $Q_1,\dots, Q_{k}$ using $r$ lines, such that $\mu_i(Q_i) = \alpha_i$ for all $i\in [k]$.
\end{conjecture}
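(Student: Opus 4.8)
The plan is to reduce the planar problem to a one-dimensional consensus-division statement and then to attack the latter by the KKM method. First I would fix a generic direction, say horizontal, and push each $\mu_i$ forward under the orthogonal projection $\pi:\R^2\to\R$ onto the $x$-axis, obtaining absolutely continuous probability measures $\nu_i=\pi_*\mu_i$ on the line. Any partition of $\R$ into (not necessarily connected) parts $I_1,\dots,I_k$ cut out by $r$ points, with the identity assignment $\nu_i(I_i)=\alpha_i$, pulls back to a partition of the plane into the vertical slabs $Q_i=\pi^{-1}(I_i)$, bounded by the $r$ vertical lines through the cut points, and satisfies $\mu_i(Q_i)=\nu_i(I_i)=\alpha_i$. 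Since $\pi_*$ preserves absolute continuity, this reduces the conjecture (with all $r$ lines parallel) to the statement that $k$ absolutely continuous measures on $\R$ with targets summing to $1$ admit an exact identity-labelled partition into $k$ parts using a number of cuts bounded in terms of $k$ alone, which is exactly the type of result of \cite{CNS}. If one is content to invoke that one-dimensional result directly, the projection already finishes the proof; the interest, and the difficulty, lies in giving a self-contained argument within the KKM method, to which I turn next.

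To prove the one-dimensional statement within the KKM framework I would model the configuration space of partitions with a fixed identity assignment by a simplex: take $r=mk$ cuts whose blocks are labelled cyclically by the pattern $1,2,\dots,k$ repeated $m$ times, let $I_i(x)$ be the union of the blocks labelled $i$, and let the barycentric coordinates $x\in\Delta_{mk-1}$ record the block lengths, so that $x\mapsto(\nu_1(I_1(x)),\dots,\nu_k(I_k(x)))$ is continuous and degenerates on the boundary (a vanishing coordinate collapses a block and drops some $\nu_i(I_i(x))$ to $0$). The goal is a single point at which all $k$ equalities hold at once. Because the KKM theorem yields only one-sided information, I would run it on the two complementary families $A_i^-=\{x:\nu_i(I_i(x))\le\alpha_i\}$ and $A_i^+=\{x:\nu_i(I_i(x))\ge\alpha_i\}$, using the dual KKM theorem (Theorem \ref{dualkkm}) and its product-of-simplices extensions \cite{AHZ, NSZ} to force the two one-sided conclusions to pinch together into equality.

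The main obstacle is precisely this passage from inequalities to equalities. In the inequality version (Theorem \ref{thm:mass}) the slack $\mu_{\pi(i)}(Q_i)\ge\alpha_i$ is harmless because the free permutation $\pi$ produced by the colorful KKM theorem (Theorem \ref{colkkm}) absorbs it; once $\pi$ is frozen to the identity there is no permutation left to take up the slack, and since the $\mu_i$ are distinct measures there is no conservation identity of the form $\sum_i\mu_i(Q_i)=1$ that would convert the $k$ inequalities $\ge\alpha_i$ into equalities. This is why a purely one-sided KKM argument cannot close the gap, and why a genuinely two-sided, Borsuk-Ulam-flavored input (the source of the consensus-division result of \cite{CNS}) appears to be needed; within the present method the role of the dual KKM theorem on a product of simplices would be to supply that two-sidedness while keeping the number of blocks, hence of cuts and lines, bounded in terms of $k$. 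I would expect the verification that the two covers meet the dual KKM boundary hypothesis, together with the bookkeeping that lets $m$ (and thus $r$) be chosen depending only on $k$, to be the technically delicate parts.
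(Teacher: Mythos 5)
The paper contains no proof of this statement: it is posed as an open conjecture, motivated as the identity-permutation analogue of Theorem~\ref{thm:mass} and as a planar analogue of the disproportionate-division theorem of \cite{CNS} (see Problem~\ref{prob:dis}). There is therefore no in-paper argument to compare yours against; what can be judged is whether your attempt actually establishes the statement.

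Your first paragraph does settle the conjecture as literally written, and you should say so more forcefully. Two preliminary readings must be fixed: the ``$2k$ \dots regions $Q_1,\dots,Q_k$'' is evidently a transcription slip from Theorem~\ref{thm:mass} (you correctly treat it as $k$ regions), and the quantification must be read as $r=r(k)$, since a single $r$ valid for all $k$ is false outright: the $k$ regions have positive measure, hence occupy at least $k$ distinct cells of the arrangement, while $r$ lines create at most $1+r+\binom{r}{2}$ cells, forcing $r\ge \Omega(\sqrt{k})$. With that reading, your reduction is complete modulo the cited literature: the pushforwards $\nu_i$ under orthogonal projection are absolutely continuous (a Lebesgue-null set of $\R$ has null preimage slab, by Fubini), transporting $\R$ to $(0,1)$ by an increasing diffeomorphism preserves null sets in both directions, and the theorem of \cite{CNS} --- which the paper itself quotes as giving an exact, identity-labelled partition with $q\le 3n-4$ cuts --- pulls back through the $3k-4$ vertical lines over the cut points to give disconnected slab regions $Q_i$ with $\mu_i(Q_i)=\alpha_i$. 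Nothing in the statement excludes parallel lines or slab regions. The correct conclusion to draw, and one worth flagging to the authors, is that the conjecture as written is weaker than intended: it collapses under projection to the known one-dimensional result (and conversely does not imply it, since a partition by arbitrary lines does not project to a partition by points). Genuinely two-dimensional content would require an extra condition the text hints at but does not impose, such as connectedness or convexity of the regions, or prescribed non-parallel line directions.

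Your second and third paragraphs, which aim at a self-contained proof within the KKM method, are an accurate diagnosis but not a proof, and you acknowledge this. The covers $A_i^-=\{x:\nu_i(I_i(x))\le\alpha_i\}$ and $A_i^+=\{x:\nu_i(I_i(x))\ge\alpha_i\}$ are defined, but you verify neither the covering nor the boundary hypotheses of Theorem~\ref{dualkkm} for any combined family, and no mechanism is exhibited that forces both one-sided conclusions to hold at a \emph{common} point of the simplex; your own observation --- that with the permutation frozen to the identity there is no conservation identity $\sum_i\mu_i(Q_i)=1$ to upgrade the inequalities of Theorem~\ref{thm:mass} to equalities, so a one-sided KKM conclusion cannot close the gap and a two-sided, Borsuk--Ulam-flavored input seems necessary --- is precisely the unfilled hole. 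So if the target is a KKM-method proof (arguably the point of posing the conjecture in this survey), there is a genuine gap in your sketch; if the target is the conjecture as stated, your first paragraph already suffices and the rest is dispensable.
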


\section{Fair Division}

Fair division is an area of research in the intersection of game theory, economics, and discrete geometry that deals with  partitions of goods among a set of players with subjective preferences. The setting is  as follows. We are given
$n$ players, and a cake (or multiple cakes) to be divided among the players. 
Each cake is identified with the segment $[0,1]$, and a ``piece" is  a sub-segment. 
In every partition of the cake(s), each player has  a list of preferred pieces (or tuple of pieces). 
An envy-free division of the cake(s) is a partition  of the cake(s) into pieces and an allocation of the pieces to (a subset of) the players, such that every player receives a piece (a tuple of pieces) they prefer. Fair division problems can be viewed also as ``colorful" mass partition problems. 
 For  history  and known results see e.g. \cite{Su, NSZ, Soberon} and the references therein.

\subsection{Envy-free division of cakes}

The classical fair division theorem, due to Stromquist \cite{Stromquist} and  Woodall \cite{Woodall}, states the following:
\begin{theorem}[Fair division theorem, Stromquist \cite{Stromquist},  Woodall \cite{Woodall}, 1980]\label{thm:cfd}
Suppose we have  a cake and $n$ players,
and in every partition of the cake into $n$ pieces every player has a list of their preferred pieces. Suppose further that the following two conditions hold: 
\begin{enumerate}
\item[(1)] {\em The players are hungry}: in every partition  of the cake into $n$ pieces every player prefers at least one positive-length piece.
\item[(2)] {\em The preference sets are closed}: if a player prefers piece $i$ in a converging sequence of partitions, then they prefer piece $i$ also in the limit partition.
\end{enumerate}
Then  there exists an envy-free division of the cake. 
\end{theorem}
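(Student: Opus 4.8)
The plan is to run the KKM method exactly as in the proof of Gallai's theorem, but with the colorful KKM theorem (Theorem \ref{colkkm}) playing the role of the black box: since the $n$ players carry $n$ potentially distinct preference structures, I would feed one KKM cover per player into Gale's theorem. For \textbf{Step 1} I would identify a partition of the cake $[0,1]$ into $n$ ordered pieces with a point of $\Delta_{n-1}$: to $x=(x_1,\dots,x_n)\in\Delta_{n-1}$ I associate the partition whose $i$-th piece is the subsegment of length $x_i$, i.e. with cut points $u_i(x)=\sum_{j\le i}x_j$ and piece $i$ equal to $[u_{i-1}(x),u_i(x)]$. Thus $\Delta_{n-1}$ is precisely the configuration space of all length-$n$ partitions, the vertex $e_i$ corresponds to giving the whole cake to piece $i$, and the faces record which pieces are allowed to be nonempty.

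For \textbf{Step 2} I would, for each player $j\in[n]$ and each index $i\in[n]$, define
$$A^j_i=\{x\in\Delta_{n-1}\mid \text{player } j \text{ prefers piece } i \text{ in the partition } x\}.$$
The closed-preference hypothesis (2) makes each $A^j_i$ closed, and hungriness (1) gives $\bigcup_{i=1}^n A^j_i=\Delta_{n-1}$ for every $j$. To verify the KKM covering condition for a fixed player $j$, I would take a face $\sigma$ and a point $x\in\sigma$; then $x_i=0$, so piece $i$ is degenerate, for every $i\notin\sigma$. By hungriness player $j$ prefers some positive-length piece, whose index must therefore lie in $\sigma$, so $x\in\bigcup_{i\in\sigma}A^j_i$. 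Hence $\{A^j_1,\dots,A^j_n\}$ is a closed KKM cover of $\Delta_{n-1}$ for each $j$.

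For \textbf{Step 3} I would apply Theorem \ref{colkkm} to these $n$ covers to obtain a permutation $\pi\in S_n$ and a point $x^{\ast}\in\bigcap_{i=1}^n A^i_{\pi(i)}$. At the single partition $x^{\ast}$ every player $i$ simultaneously prefers piece $\pi(i)$, and because $\pi$ is a bijection the assignment $i\mapsto\pi(i)$ hands each player a distinct preferred piece, which is exactly an envy-free division.

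I expect the only genuinely delicate point to be the verification of the KKM covering condition, and specifically the use of hungriness to force a hungry player's preferred piece to have its index in the support face $\sigma$: this is the structural fact that converts the analytic statement ``player $j$ prefers a positive-length piece'' into the combinatorial incidence $i\in\sigma$ required by the cover. The closedness of the $A^j_i$, which legitimizes applying the closed version of the colorful KKM theorem, follows directly from hypothesis (2), and the final translation from a rainbow intersection point to an envy-free allocation is immediate once $\pi$ is recognized as a permutation.
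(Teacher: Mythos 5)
Your proposal is correct and takes essentially the same route as the paper, which likewise models partitions by points of $\Delta_{n-1}$, defines the sets $A^j_i$ of partitions in which player $j$ prefers piece $i$, observes that hungriness and closed preferences make each $\{A^j_1,\dots,A^j_n\}$ a closed KKM cover, and applies the colorful KKM theorem (Theorem \ref{colkkm}) to obtain a permutation $\pi$ and a common point yielding the envy-free allocation $j\mapsto\pi(j)$. Your explicit face-by-face verification of the KKM covering condition (indices $i\notin\sigma$ give degenerate pieces, so a hungry player's preferred piece lies in $\sigma$) is exactly the content the paper compresses into the remark that condition (1) is equivalent to the KKM covering condition.
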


In \cite{Su}, Su gave a proof of this theorem using Sperner's lemma. 
Note however that Theorem \ref{thm:cfd} is the colorful KKM theorem in disguise. Indeed, 
the set of all partitions of the cake can be modeled using the  $(n-1)$-simplex $\Delta_{n-1}$, where a point $x=(x_1,\dots,x_n) \in \Delta_{n-1}$ corresponds to the partition in which the $i$-th piece (ordered from left to right) is of length $x_i$. For $i,j\in [n]$ define sets $A^j_i = \{x\in \Delta_{n-1} \mid \text{player 
 } j \text{ prefers piece } i \text{ in the partition }x\}.$
 Then condition (1) of the theorem is equivalent to the condition that the sets $\{A^j_1,\dots, A^j_n\}$ satisfy the KKM covering condition for every $j$, and condition (2) is equivalent to the condition that the sets $A^j_i$ are closed. The conclusion of the colorful KKM theorem, that there exists a permutation $\pi:[n]\to [n]$ and a point $x\in \bigcap A^j_{\pi(j)}$, is equivalent to $x$ being an envy-free division of the cake, where player $j$ is allocated the piece $\pi(j)$. 

Woodall \cite{Woodall} additionally showed that Theorem \ref{thm:cfd} holds even if the preferences of one of the players is not known, or ``secretive''. Specifically, there exists a partition of the cake such that no matter which piece the secretive player chooses as their preferred piece, there is an allocation of the remaining pieces to the rest of the players that is envy-free. Another proof of this result was given in \cite{asada2018}. The KKM-type theorem corresponding to the secretive player version is the following:

\begin{theorem}[Woodall 1980 \cite{Woodall}, Asada et al. 2018 \cite{asada2018}]\label{thm:secretKKM}
For every $j\in [k]$, let $\{A_1^j,\dots,A_{k}^j\}$ be a KKM cover of $\Delta_{k-1}$. Then there exists $x\in \Delta_{k-1}$ such that for each $i\in[k]$ there exists a permutation $\pi_i:  [k-1] \rightarrow [k]\setminus \{i\}$ satisfying $x\in \bigcap_{j=1}^{k-1} A^j_{\pi_i(j)}$.
\end{theorem}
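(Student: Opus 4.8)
The plan is to prove Theorem~\ref{thm:secretKKM} by the standard KKM trick of \emph{coning off}: I will encode the ``secretive player'' by building one auxiliary KKM cover on a larger simplex whose extra vertices absorb all the possible choices of the hidden player, and then extract the desired simultaneous system of matchings from the conclusion of the colorful KKM theorem (Theorem~\ref{colkkm}) applied to this enlarged configuration. Concretely, I would work on $\Delta_{k-1}$ itself for the $k$ given covers, but reserve one ``color slot'' to be filled in a way that is robust to the secretive player's eventual choice. The key reformulation is that the statement asks for a single point $x$ that simultaneously admits, for \emph{every} choice $i$ of the secretive player's piece, an envy-free allocation of the remaining $k-1$ pieces among the first $k-1$ players. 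This is precisely a ``one point works for all $i$'' strengthening of Theorem~\ref{thm:cfd}, so the heart is to produce such a universal $x$.

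First I would recall the topological content: for each player $j\in[k-1]$ (say the non-secretive players are $1,\dots,k-1$ and player $k$ is secretive), the sets $\{A_1^j,\dots,A_k^j\}$ form a KKM cover, so each $j$ induces a map/labeling of $\Delta_{k-1}$. The goal point $x$ must satisfy, for each fixed $i\in[k]$, that the $k-1$ players can be matched injectively to the $k-1$ pieces in $[k]\setminus\{i\}$ with $x\in A^j_{\pi_i(j)}$. I would set this up as $k$ separate matching conditions that must hold \emph{at the same point}. The natural tool is a combinatorial/topological fixed-point argument: triangulate $\Delta_{k-1}$ finely, use the KKM (Sperner) labelings of the $k-1$ players to define a Sperner-type labeling on the fine triangulation, and run a parity/degree argument forcing the existence of a single simplex (hence in the limit a single point) that is ``fully labeled'' in the strong sense needed. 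The approach of Asada et al.~\cite{asada2018} is exactly to build such a secretive-agent Sperner lemma; following their scheme I would define a labeling that records, for each vertex and each candidate secretive piece $i$, a valid matching of the remaining players, and then show a suitable boundary condition guarantees a completely labeled cell.

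The key steps in order are: (i) fix the identification of the first $k-1$ covers with combinatorial labelings via the KKM condition, noting that on each face $\sigma$ every player's preferred index lies in $\sigma$; (ii) state the target as the simultaneous existence of perfect matchings in $k$ bipartite graphs $G_i$ (players $[k-1]$ versus pieces $[k]\setminus\{i\}$) at one common point; (iii) establish a Hall-type condition at the combinatorial level, ensuring that at a fully labeled cell each $G_i$ has the required matching, by invoking the deficiency form of Hall's theorem together with the KKM covering condition restricted to faces; (iv) prove the existence of a fully labeled cell by a secretive-agent Sperner lemma (a parity count on the triangulation, with the boundary behavior controlled by the fact that the $i$-th piece degenerates exactly on the face $x_i=0$); and (v) pass to the limit over a sequence of refinements, using closedness of the $A^j_i$ (all closed, or treat the open case by the standard complementation/limit argument) to obtain a genuine point $x$ and the permutations $\pi_i$.

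The hard part will be step (iv): engineering the right Sperner-type labeling and boundary condition so that a \emph{single} completely labeled simplex yields the matchings for \emph{all} $k$ values of $i$ simultaneously, rather than $k$ separate points. The difficulty is that an ordinary fully labeled simplex only certifies one matching; the secretive-player strengthening demands robustness across all possible deletions $[k]\setminus\{i\}$, which is what forces the more delicate ``secretive'' variant of Sperner's lemma. I expect the crux to be verifying the Hall condition uniformly in $i$ from the KKM covering condition, and checking that the boundary of $\Delta_{k-1}$ contributes the correct parity so that the count is nonzero. Once the combinatorial lemma is in place, the limiting argument and translation back to the permutations $\pi_i$ are routine.
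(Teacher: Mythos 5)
The paper itself contains no proof of Theorem~\ref{thm:secretKKM}: it states the result and defers to Woodall \cite{Woodall} and Asada et al.\ \cite{asada2018}. So the comparison is with the cited arguments, and your outline does follow that route in spirit. Two things you get exactly right: the reduction of the conclusion to a \emph{uniform} Hall condition at a single point --- by the defect form of Hall's theorem, ``for every $i$ there is a matching of the $k-1$ players into $[k]\setminus\{i\}$'' is equivalent to $|N(S)|\ge |S|+1$ for every nonempty $S\subseteq[k-1]$ in the bipartite graph with an edge $(j,i)$ whenever $x\in A^j_i$ --- and the observation that only the covers $j\in[k-1]$ matter (the statement's hypothesis quantifies over $j\in[k]$, but the $k$-th cover never enters the conclusion).

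The genuine gap is your step (iv), and you flag it yourself without resolving it. A parity count of ``fully labeled'' cells is the wrong mechanism here: an ordinary completely-labeled simplex certifies one system of distinct representatives, and there is no labeling scheme of the usual Sperner kind in your writeup (none is defined) whose parity would certify all $k$ deletion conditions simultaneously. The known proofs are degree-theoretic rather than parity-based, and the uniform Hall condition falls out of a weight count instead of a labeling condition. Concretely: triangulate $\Delta_{k-1}$, use the KKM condition to choose for each vertex $v$ and each $j\in[k-1]$ a label $\ell_j(v)$ in the support of $v$ with $v\in A^j_{\ell_j(v)}$, extend affinely to simplicial maps $f_j$, and average, $f=\frac{1}{k-1}\sum_{j=1}^{k-1}f_j$. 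Each $f_j$ maps every face of $\Delta_{k-1}$ into itself, hence so does $f$, hence $f$ is surjective; pick a preimage $x$ of the barycenter, lying in a cell $\sigma$. Writing $f_j(x)$ as a convex combination of the points $e_{\ell_j(v)}$, $v\in\sigma$, each label $i$ absorbs total weight exactly $\frac{k-1}{k}<1$, so for every nonempty $S\subseteq[k-1]$ the weight $|S|$ emitted by $S$ lands on $N(S)=\bigcup_{j\in S}\{\ell_j(v):v\in\sigma\}$ and forces $|N(S)|>|S|$, i.e.\ $|N(S)|\ge|S|+1$ --- exactly your Hall condition, for \emph{all} $i$ at once. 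Your step (v) then works as you describe: refine the mesh, pass to a subsequence along which the (finite) bipartite graph of labels is constant, and use closedness of the $A^j_i$ (with the standard open-to-closed reduction in the open case) to transfer the condition to the limit point, after which Hall's theorem produces the permutations $\pi_i$. Without this degree/averaging step --- or the multilabeled Sperner lemma of \cite{asada2018} actually proved, with its boundary analysis --- what you have is a correct plan with its crux missing, not a proof.
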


A more general version of Theorem \ref{thm:secretKKM} appears in \cite{panina2022envy}.

\medskip
In \cite{NSZ}, a generalization of the classical envy-free division theorem for  multi-pieces was proved. The proof used the KKM method, together with the KKMS theorem (Theorem \ref{kkms}) and its polytopal extension due to Komiya \cite{Komiya}.  

\begin{theorem}[Nyman, Su, Zerbib 2020 \cite {NSZ}]\label{thm:multcakes} \hfill
\begin{enumerate}
\item  Suppose a cake and $n$ players satisfy the following:
(a) in any partition of cake into n pieces, each player prefers some $k$-tuple of 
positive length pieces, and
(b) the preference sets are closed.
Then there is a partition of the cake in which at least $\lceil\frac{n}{k^2-k+1}\rceil$ players prefer pairwise disjoint $k$-tuple of pieces.  
\item Suppose $k$ cakes and $p=k(n - 1) +
1$ players satisfy the following:
(a) in any partition of cakes into $n$ pieces each, each player prefers some $k$-tuple of 
positive length pieces, one piece from each cake, and
(b) the preference sets are closed.
Then there is a partition of the cakes in which at least $\lceil\frac{p}{k^2-k}\rceil$ players prefer pairwise disjoint $k$-tuple of pieces.  
\end{enumerate}

\end{theorem}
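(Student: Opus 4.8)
The plan is to run the KKM method exactly as in the proof of Gallai's theorem (Theorem \ref{gallai}) and its $d$-interval refinement by Aharoni--Kaiser--Zerbib, but now driven by the KKMS theorem (Theorem \ref{kkms}) and Komiya's polytopal extension, so that the combinatorial constants $k^2-k+1$ and $k^2-k$ enter through the Tardos--Kaiser bound of Theorem \ref{d-intervals}. For part (1), I would model the configuration space of partitions of the single cake into $n$ ordered pieces by the simplex $\Delta_{n-1}$, where $x=(x_1,\dots,x_n)$ gives the partition whose $i$-th piece has length $x_i$; crucially the number of players equals $n=\dim\Delta_{n-1}+1$, matching the number of balanced faces KKMS will produce. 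A face $\sigma$ of $\Delta_{n-1}$ corresponds to the subset of $[n]$ indexing the positive-length pieces, and a $k$-tuple preferred by a player is naturally a $k$-interval on the ordered cake, with two tuples disjoint as subsets of $[n]$ precisely when the corresponding $k$-intervals are disjoint.

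Assuming for contradiction that no partition admits $m:=\lceil n/(k^2-k+1)\rceil$ players preferring pairwise disjoint $k$-tuples, I would build a KKMS cover by letting $A_\sigma$, for a $(k-1)$-dimensional face $\sigma$ (a $k$-subset), be the set of partitions in which some player prefers the $k$-tuple $\sigma$, and $A_\sigma=\emptyset$ for faces of other dimensions. Hungriness guarantees that in any partition supported on a face, every player prefers a $k$-subset of that support, which delivers the KKMS covering condition; closedness of the preference sets gives closedness of the $A_\sigma$. Applying Theorem \ref{kkms} then yields balanced faces $\sigma_1,\dots,\sigma_n$, each a $k$-tuple witnessed by a player, together with a common partition $x_0\in\bigcap_i A_{\sigma_i}$; balancedness says $\tfrac1n\mathbf 1$ lies in the convex hull of the barycenters $\tfrac1k\mathbf 1_{\sigma_i}$, so the preferred tuples carry a uniform fractional matching structure on $[n]$.

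The final and hardest step is the matching-theoretic one (Step 4 of the KKM method): from these balanced $k$-tuples, which carry a fractional matching structure but need not themselves be pairwise disjoint, I must extract an \emph{integral} matching of pairwise disjoint tuples preferred by \emph{distinct} players, of size at least $m$. It is precisely here that the combinatorial arguments underlying the Tardos--Kaiser inequality $\tau\le(k^2-k+1)\nu$ for $k$-intervals (Theorem \ref{d-intervals}) are deployed to force a matching of the required size, and this is the delicate part, since one must simultaneously control disjointness of the $k$-intervals and distinctness of the witnessing players. The resulting matching contradicts the assumption and proves (1).

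For part (2), the same scheme runs over the product polytope $P=(\Delta_{n-1})^k$ of dimension $k(n-1)$, which models the $k$ independent cake partitions and whose faces are products $\sigma=(\sigma_1,\dots,\sigma_k)$ of faces of the factors; here $p=k(n-1)+1=\dim P+1$ again equals the number of balanced faces produced by Komiya's polytopal KKMS theorem. Since each player now prefers one piece from each cake, the preferred tuples are \emph{separated} $k$-intervals (one component per cake, i.e.\ vertices of $P$), and the improved Tardos--Kaiser constant $k^2-k$ for separated $d$-intervals (the second clause of Theorem \ref{d-intervals}) yields the bound $\lceil p/(k^2-k)\rceil$ in the extraction step. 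I expect the principal obstacle throughout to be this matching-theoretic conversion and, secondarily, verifying the KKMS covering condition on every face of the product $P$ while ensuring the witnessing players in the extracted matching remain distinct.
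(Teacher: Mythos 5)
Your Steps 1--3 follow exactly the route the paper attributes to \cite{NSZ}: model partitions by $\Delta_{n-1}$ (resp.\ by a product of simplices), place the sets $A_\sigma$ on $(k-1)$-dimensional faces (resp.\ on vertices of the product), verify the KKMS covering condition from hungriness and closedness, and extract a balanced collection via Theorem \ref{kkms} (resp.\ Komiya's polytopal extension). The first genuine gap is the extraction step, which you explicitly leave as a black box with a pointer to ``arguments underlying the Tardos--Kaiser inequality.'' That pointer is off-target as stated: once the partition $x_0$ is fixed, all pieces are pairwise disjoint, so disjointness of preferred tuples is just disjointness of their index sets and no interval geometry survives; the balanced faces form an \emph{arbitrary} $k$-uniform hypergraph $H$ on $[n]$, and balancedness says precisely that $\nu^*(H)\ge n/k$. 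The missing named ingredient is F\"uredi's fractional-matching theorem, $\nu^*(H)\le \left(k-1+\frac{1}{k}\right)\nu(H)$ for hypergraphs of rank $k$ and $\nu^*(H)\le (k-1)\nu(H)$ for $k$-partite ones, which immediately gives $\nu(H)\ge \frac{n/k}{k-1+1/k}=\frac{n}{k^2-k+1}$ in part (1), and in part (2), applied to the $k$-partite hypergraph of witnessed vertices of $(\Delta_{n-1})^k$ (where the balanced collection scales to a fractional matching of size $n$), gives $\nu \ge \frac{n}{k-1}\ge \left\lceil \frac{p}{k^2-k}\right\rceil$. This KKMS-plus-F\"uredi combination is exactly the combinatorial half of the proof of Theorem \ref{d-intervals} in \cite{AKZ}, so your instinct about the toolbox is right, but without F\"uredi the constants never materialize. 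A smaller but real flaw in the same step: your claim that hungriness yields the covering condition on \emph{every} face fails on faces with fewer than $k$ vertices, where no $k$-tuple of positive-length pieces exists; the preferences/covers must be set up to degrade on such degenerate partitions.

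The second gap is the distinct-players issue, which you correctly flag but do not resolve---and it \emph{cannot} be resolved with your covers. With $A_\sigma=\{x : \text{some player prefers } \sigma\}$, the KKMS covering condition is already satisfied if a single player is hungry, so nothing derivable from such a cover can distinguish more than one player: at the common point $x_0$ all $m$ matched tuples could in principle be witnessed by the same player, and the conclusion ``at least $m$ players'' would not follow. The repair is to make the covers player-indexed, $A^j_\sigma=\{x : \text{player } j \text{ prefers } \sigma\}$, one KKMS cover per player (note that the number of players equals $\dim+1$ of the polytope in both parts, which is exactly what the colorful statements require), and to invoke a colorful KKMS-type theorem---Shih--Lee \cite{ShihLee}, or Theorem \ref{thm:col-komiya} for the polytopal setting of part (2). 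Its conclusion attaches to each balanced face $\sigma_j$ its own witnessing player $j$, so any sub-matching extracted by F\"uredi automatically comes with pairwise distinct players (two coinciding faces witnessed by different players are parallel edges, of which a matching uses at most one). With these two repairs---F\"uredi's theorem for the integral matching and player-indexed colorful covers for distinctness---your outline closes into the actual proof.
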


 Additional topological techniques, such as the topological Hall theorem \cite{AH} and the ``Meshulam game", were applied in \cite{ABBSZ} for step 4 of the KKM method, to prove  further bounds on fair division with multiple cakes, improving the bounds given in Theorem \ref{thm:multcakes}. 
\medskip

A natural question is whether the hungry player condition is necessary.  
In \cite{MZ} it was shown that if the number of players $n$ is a prime number or $n=4$ then it is possible to find an envy-free division of the cake  even if the hungry players condition is not satisfied (another proof was later given in \cite{panina2023configuration}). This verified a conjecture of  Segal-Halevi \cite{SegalHalevi}.
The main step in our proof was a new topological lemma, reminiscent of Sperner's lemma. Later this result prime power $n$, and was disproved for all $n$ that is not a prime power \cite{AK}.

In a slightly different setting, Sober\'on \cite{Soberon} proved  a ``sparse" version of the classical fair division theorem. In his theorem not all the players are hungry, but  at least one player in every set of $n-k+1$ players is hungry. The conclusion is that there is a envy-free division of the cake to $k$ players. To this end, he proved a sparse-colorful version of the KKM theorem, and conjectured that there is a polytopal extension of this theorem. This conjecture was proved  in \cite{MZ2} (Theorem \ref{thm:sparse-komiya}). 

Sober\'on and Yu \cite{soberon2023} proved a higher dimensional version of Theorem \ref{thm:cfd} for nested hyperplane partitions.
Here the measures replace the role of the players as in Theorem \ref{thm:cfd}. Given measures $\mu_1,\dots,\mu_n$, we say that a convex partition $(C_1,\dots,C_n)$ of $\mathbb{R}^d$ can be allocated to the measures in an envy-free way if there exists a permutation $\pi$ of $[n]$ such that $\mu_i(C_{\pi(i)})\geq \mu_i(C_{\pi(i')})$ for all $i,i' \in [n]$.

\begin{theorem}[Sober\'on - Yu 2023 \cite{soberon2023}]
     Let $\mu_1,\dots,\mu_n$ be absolutely continuous probability measures in $\R^d$ and let $v_1,\dots,v_{n-1} \in \R^d$. Then there is a  nested hyperplane partition $(c_1,\dots,C_n)$ of $\mathbb{R}^d$  with respect to the vectors $v_1,\dots,v_{n-1}$, such that there is an envy-free allocation of the regions to the measures. 
\end{theorem}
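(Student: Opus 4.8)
\textbf{The plan is to apply the KKM method exactly as in the proofs of Theorems~\ref{gallai} and \ref{thm:t(4)}, using the colorful KKM theorem, with the nested hyperplane partition serving as the model for the configuration space.} First I would model the space of all nested hyperplane partitions with respect to the fixed vectors $v_1,\dots,v_{n-1}$ by the simplex $\Delta_{n-1}$, via the map $x\mapsto (C_1(x),\dots,C_n(x))$ described in the excerpt. The key structural facts I would record up front are the two defining properties of a $\Delta$-space: that $C_i(x)=\emptyset$ (hence has measure zero) whenever $x_i=0$, and that $x\mapsto \mu_i(C_j(x))$ is continuous for each absolutely continuous $\mu_i$. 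These are precisely the inputs the colorful KKM theorem needs in order to produce closed KKM covers.

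\textbf{Step 2 (obtaining the covers).} For each $i\in[n]$ and each $j\in[n]$ I would define
\[
A^i_j=\{x\in\Delta_{n-1}\mid \mu_i(C_j(x))\ge \mu_i(C_{j'}(x))\text{ for all }j'\in[n]\},
\]
the set of partitions in which measure $\mu_i$ weakly prefers part $j$. By continuity of $x\mapsto\mu_i(C_j(x))$ each $A^i_j$ is closed, and since some part always maximizes the $\mu_i$-value we get $\Delta_{n-1}=\bigcup_{j=1}^n A^i_j$. The covering condition is the crucial point: if $x$ lies on the face $\sigma$, i.e.\ $x_j=0$ for $j\notin\sigma$, then $\mu_i(C_j(x))=0$ for those $j$, so the maximizer can always be taken inside $\sigma$ (at least one part has positive measure since $\mu_i$ is a probability measure), giving $x\in\bigcup_{j\in\sigma}A^i_j$. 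Thus $\{A^i_1,\dots,A^i_n\}$ is a closed KKM cover of $\Delta_{n-1}$ for every $i$.

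\textbf{Step 3 (applying Theorem~\ref{colkkm}).} The colorful KKM theorem then yields a permutation $\pi\in S_n$ and a point $x_0\in\bigcap_{i=1}^n A^i_{\pi(i)}$. Unwinding the definitions, at the partition $(C_1(x_0),\dots,C_n(x_0))$ each measure $\mu_i$ weakly prefers the part $C_{\pi(i)}(x_0)$, which is exactly the envy-free allocation sought. I would take the nested hyperplane partition corresponding to $x_0$ and the allocation $i\mapsto C_{\pi(i)}(x_0)$ to conclude.

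\textbf{The main obstacle} is verifying that the construction genuinely satisfies the two $\Delta$-space axioms for the \emph{nested} partition, rather than for a single hyperplane. The recursive definition (intersecting the lower-dimensional partition with a halfspace cut orthogonal to $v_{n-1}$, rescaling $x'$ on the subsimplex) must be checked to ensure continuity of $x\mapsto\mu_i(C_j(x))$ across the whole simplex---including the degenerate boundary where $\sum_{i=1}^{n-1}x_i\to 0$ and the rescaled point $x'$ threatens to blow up---and to ensure that $x_j=0$ forces $C_j(x)$ into a lower-dimensional (measure-zero) set at every level of the nesting. Since the excerpt already asserts that spaces of nested hyperplane partitions form a $\Delta$-space, I would invoke that fact, but a careful writeup would confirm the degenerate limits by noting that when a coordinate vanishes the corresponding halfspace collapses to $\emptyset$ or $\mathbb{R}^d$, so the partition varies continuously and the vanishing part contributes zero measure. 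Everything after that is the standard three-step KKM template.
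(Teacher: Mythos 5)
Your proposal is correct and follows exactly the KKM-method template that the paper demonstrates (Theorems~\ref{gallai}, \ref{thm:t(4)}, \ref{thm:mass}) and attributes to Sober\'on--Yu for this result: model the nested hyperplane partitions by $\Delta_{n-1}$ using the $\Delta$-space axioms, define the closed weak-preference sets $A^i_j$, verify the KKM covering condition via $\mu_i(C_j(x))=0$ when $x_j=0$, and apply the colorful KKM theorem (Theorem~\ref{colkkm}) to extract the permutation and the envy-free partition. The paper itself states the theorem without proof, but your argument, including the flagged verification that the recursive construction satisfies the $\Delta$-space continuity and degeneracy conditions, is precisely the intended one.
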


\subsection{Dual fair division theorems}

The classical envy-free division theorem of Stromquist and Woodall guarantees an envy-free partition of \textit{desirables} among $n$ players. Dual to this notion is assigning \textit{undesirables} to $n$ players in an envy-free manner. For example, if $n$ players are to divide among themselves the total cost of rent for $n$ rooms, we would like to assign to each player a unique room, and a percentage of the total rent they must pay for that room, in an envy-free manner. Su \cite{Su} proved that under certain conditions such an envy-free rent division does exist:

\begin{theorem}[Rental harmony theorem, Su 1999 \cite{Su}]\label{thm:rentDiv}
Suppose $n$ players in a house with $n$ bedrooms would like to decide who gets which room and for what percentage of the rent. Suppose further that the following is satisfied:
\begin{enumerate}
    \item In every assignment of rent to the rooms, each player  finds some room acceptable.
    \item The players  always prefer a free room to a non-free room.
    \item The preference sets are closed.
\end{enumerate}
Then there exists an assignment of rent to each room, and an allocation of the rooms to the players, so that each player prefers their room with its assigned rent.
\end{theorem}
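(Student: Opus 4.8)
The plan is to follow the KKM method exactly as it was used to prove Theorem~\ref{thm:cfd} (the classical fair division theorem), since the Rental harmony theorem is its dual counterpart and is amenable to the very same \emph{colorful} framework. First I would model the configuration space of all possible rent assignments by the simplex $\Delta_{n-1} = \{(x_1,\dots,x_n)\mid x_i\ge 0,\ \sum x_i = 1\}$, where a point $x$ corresponds to the assignment in which room $i$ is charged the fraction $x_i$ of the total rent. The faces of $\Delta_{n-1}$ encode which rooms are free: a face $\sigma$ corresponds to those assignments for which exactly the rooms $i\notin\sigma$ are free (have $x_i=0$).

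Next, for each player $j\in[n]$ and each room $i\in[n]$, I would define
\[
A^j_i = \{x\in\Delta_{n-1}\mid \text{player } j \text{ finds room } i \text{ acceptable in the assignment } x\}.
\]
The three hypotheses translate directly into the input of the colorful KKM theorem (Theorem~\ref{colkkm}). Hypothesis~(3), that preference sets are closed, makes each $A^j_i$ a closed subset of $\Delta_{n-1}$. Hypothesis~(1), that every player finds some room acceptable in every assignment, gives $\Delta_{n-1}=\bigcup_{i=1}^n A^j_i$ for each $j$. The crucial point is the KKM covering condition, and here hypothesis~(2)---that players always prefer a free room---does the work: if $x$ lies on a face $\sigma$, then every room $i\notin\sigma$ is free, so player $j$ prefers one of those free rooms, placing $x\in A^j_i$ for some $i\notin\sigma$; combined with the covering $\bigcup_i A^j_i = \Delta_{n-1}$ this forces $\sigma\subseteq\bigcup_{i\in\sigma}A^j_i$, so that $\{A^j_1,\dots,A^j_n\}$ is a KKM cover for each $j$. (One must handle the degenerate assignments where a free room could be deemed unacceptable only on the boundary; hypothesis~(2) is precisely what rules this out.)

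Finally I would apply the colorful KKM theorem: there is a permutation $\pi\in S_n$ and a point $x\in\bigcap_{j=1}^n A^j_{\pi(j)}$. Reading this off, the assignment $x$ charges each room its fraction of the rent, and allocating room $\pi(j)$ to player $j$ makes every player satisfied with their own room under its assigned rent, which is exactly an envy-free rent division. The main obstacle, and the only genuinely delicate step, is establishing the KKM covering condition from the ``prefer a free room'' hypothesis: one must argue carefully at the boundary faces that the acceptable room guaranteed by hypothesis~(1) can be taken to be one of the \emph{free} rooms indexed by $\sigma$, which is where hypothesis~(2) is indispensable and where the dual (undesirables) flavor of the problem manifests itself. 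Everything else is a routine translation between the combinatorial statement and the topological hypotheses, mirroring the dictionary already laid out for Theorem~\ref{thm:cfd}.
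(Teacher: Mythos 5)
Your Steps 1 and 3 set up the right configuration space, but Step 2 --- the verification of the KKM covering condition --- is where the proof breaks, and the error is not a technicality: hypothesis (2) pushes the point $x$ into sets indexed \emph{outside} the face $\sigma$, which is the opposite of what a KKM cover requires. Your deduction ``$x\in A^j_i$ for some $i\notin\sigma$, combined with $\bigcup_i A^j_i=\Delta_{n-1}$, forces $\sigma\subseteq\bigcup_{i\in\sigma}A^j_i$'' is a non sequitur; contrast it with the proof of Theorem \ref{gallai} in the paper, where one first shows $x\notin\bigcup_{i\notin\sigma}A_i$ and only then concludes $x\in\bigcup_{i\in\sigma}A_i$ from the global covering. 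Here you have established membership in the wrong half of the union, which gives no information about $\bigcup_{i\in\sigma}A^j_i$. The covering condition genuinely fails: take $n=2$ and a player who accepts room $i$ exactly when $x_i\le 1/2$. All three hypotheses of the theorem hold (in particular, at a vertex the player accepts precisely the free room), yet at the vertex $x=(1,0)$, corresponding to the face $\sigma=\{1\}$, the player rejects room $1$, so $\sigma\not\subseteq A^j_1$ and $\{A^j_1,A^j_2\}$ is not a KKM cover. Hence Theorem \ref{colkkm} cannot be invoked on the acceptance sets, and the miserly-tenant hypothesis cannot ``do the work'' you assign to it.

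This directional reversal is exactly the point the paper makes in this subsection: whereas the Stromquist--Woodall theorem is the colorful KKM theorem in disguise (hungry players force the preferred piece to lie in the support of $x$), the rental harmony theorem corresponds to a colorful version of the \emph{dual} KKM theorem (Theorem \ref{dualkkm}; see \cite{AHZ,NSZ} for its generalizations). There, hypothesis (1) gives condition (a), that $\bigcup_j A_j=\Delta_{k-1}$, and hypothesis (2) gives condition (b), that $x\in A_j$ whenever $x_j=0$ --- membership guaranteed on the \emph{zero} coordinates, precisely what your argument actually proves. So the fix is to swap the black box, not to force the dual covering data into Gale's theorem. Even then, one small wrinkle remains that your sketch should address: hypothesis (2) as stated guarantees the player accepts \emph{some} free room, while condition (b) asks that room $i$ itself be acceptable whenever $x_i=0$; one can, for instance, enlarge each $A^j_i$ by the facet $\{x\in\Delta_{n-1}\mid x_i=0\}$ (this preserves closedness and condition (a)) and afterwards repair the allocation among the free rooms, or follow Su's dual Sperner labeling argument \cite{Su} directly.
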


Similarly to Theorem \ref{thm:secretKKM}, there is a version of Theorem \ref{thm:rentDiv} where the preferences of one of the players is secret \cite{fricksec}.

As the fair division theorem corresponds to the colorful KKM theorem, Theorem \ref{thm:rentDiv} corresponds a colorful version of Theorem \ref{thm:cfd}. 
In \cite{NSZ} a multiple house version was proved. 
\begin{theorem}
    Under similar conditions to those of Theorem \ref{thm:rentDiv}, if $2n - 1$ players seek to rent two rooms, one
in each of two buildings containing $n$ rooms each, then there exists a
division of rents in which a subset of $n$ players each get their preferred
two rooms.
\end{theorem}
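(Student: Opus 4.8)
The plan is to run the KKM method exactly as in the proofs of Theorems \ref{thm:rentDiv} and \ref{thm:multcakes}, but with the product-of-simplices \emph{dual} KKM theorem (the generalization of Theorem \ref{dualkkm} to products of simplices from \cite{AHZ, NSZ}) as the black box, followed by a matching-theory step. Since each of the $2n-1$ players rents one room in each of the two buildings, and each building has $n$ rooms, I would model the space of all rent divisions by the product polytope $P=\Delta_{n-1}\times\Delta_{n-1}$: a point $(x,y)\in P$ assigns the fraction $x_a$ of the first building's rent to its room $a$ and the fraction $y_b$ of the second building's rent to its room $b$. A face of $P$ is a product $\sigma\times\tau$, and its vertices correspond to room-pairs $(a,b)$ with $a$ in the support of $\sigma$ and $b$ in the support of $\tau$.

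First I would assume, for contradiction, that no rent division lets a set of $n$ players each receive a pairwise-disjoint preferred pair of rooms (disjoint meaning the $n$ chosen first-building rooms are distinct, and likewise for the second building). For each player $s\in[2n-1]$ and each room-pair $(a,b)$, set
$$A^s_{(a,b)}=\{(x,y)\in P : \text{player } s \text{ prefers the pair } (a,b) \text{ at the rent division } (x,y)\}.$$
The closedness of preferences makes each $A^s_{(a,b)}$ closed; acceptability makes $\{A^s_{(a,b)}\}_{(a,b)}$ cover $P$ for every $s$; and the rule that a player always prefers a free room over a non-free one yields the dual boundary condition of Theorem \ref{dualkkm}: if $x_a=0$ (room $a$ of the first building is free) then player $s$ prefers some pair using room $a$, and symmetrically when $y_b=0$. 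These are precisely the hypotheses of the product-of-simplices dual KKM theorem across the $2n-1$ players. Applying it produces a single rent division $(x_0,y_0)$ together with a collection of faces of $P$ whose associated preferred-pair sets share the common point $(x_0,y_0)$, the faces satisfying a balancedness/combinatorial condition analogous to the balanced faces of the KKMS theorem.

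The final, and hardest, step is to convert this balanced intersection into an honest assignment of $n$ players to $n$ pairwise-disjoint room-pairs; this is Step 4 of the KKM method. I would build an auxiliary bipartite-type structure recording which players prefer which rooms in each building at $(x_0,y_0)$, and apply a deficiency (Hall-type) argument to extract a matching saturating all $n$ rooms of each building by $n$ distinct players. The balancedness of the faces is exactly what feeds the Hall condition, and the arithmetic $\lceil (2n-1)/2\rceil=n$ mirrors the $k^2-k=2$ denominator appearing in the $k=2$ case of Theorem \ref{thm:multcakes}. Producing such a matching contradicts the standing assumption and completes the proof. I expect this matching extraction --- rather than the (routine) verification of the covering and boundary conditions --- to be the main obstacle, both because the correct product dual KKM theorem must be invoked with care on \emph{all} faces of $\Delta_{n-1}\times\Delta_{n-1}$, and because the passage from balanced faces to a concrete disjoint system of pairs is where the genuine combinatorial work lies.
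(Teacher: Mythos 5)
You should first note that the survey itself contains no proof of this theorem: it is stated with a one-line attribution to \cite{NSZ}, and the surrounding text indicates the route, namely the dual KKM theorem generalized to products of simplices (from \cite{AHZ, NSZ}) applied on $\Delta_{n-1}\times\Delta_{n-1}$, followed by a Step-4 matching argument. Your plan coincides with that route in its architecture: the configuration space, the sets $A^s_{(a,b)}$, the dimension count $2n-1=\dim(\Delta_{n-1}\times\Delta_{n-1})+1$, and the observation that the arithmetic mirrors the $k=2$ case of Theorem \ref{thm:multcakes}(2) are all correct and are exactly how the cited proof is organized.

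The genuine gap is that everything hinges on the conclusion of your black box, which you leave unspecified (``a balancedness/combinatorial condition analogous to the balanced faces of the KKMS theorem''), and the final step is \emph{not} a generic Hall-type deficiency argument on arbitrary rainbow pairs. If the product dual theorem only returned $2n-1$ rainbow pairs forming, say, a spanning connected system in $K_{n,n}$, the extraction would fail: the spanning tree in which one room of building~1 is joined to all $n$ rooms of building~2 and the remaining $n-1$ rooms of building~1 are all joined to a single room of building~2 has exactly $2n-1$ edges and contains no perfect matching, so no counting argument rescues arbitrary $2n-1$ pairs. What is actually needed, and what the KKMS/Komiya-style balancedness delivers, is that the rainbow pairs $(a_s,b_s)$ carry weights $\lambda_s\geq 0$ with $\sum_s\lambda_s e_{a_s}=\sum_s\lambda_s e_{b_s}=(1/n,\dots,1/n)$, i.e., they support a \emph{fractional perfect matching} of $K_{n,n}$; then K\H{o}nig/Birkhoff--von Neumann (the bipartite integrality that makes the $k=2$ case lossless) produces an integral perfect matching inside the support, whose $n$ edges belong to $n$ distinct players and yield the allocation. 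So your instinct that ``balancedness feeds the Hall condition'' is right, but pinning down the precise statement of the product dual theorem and verifying that its conclusion is balancedness rather than something weaker is the entire content of the proof, not a routine afterthought. Two smaller points: the argument is direct, not by contradiction --- the covering and boundary conditions hold unconditionally from the hypotheses, and the balanced rainbow point immediately gives the rent division --- and your boundary condition should be stated in the strong containment form of Theorem \ref{dualkkm} (membership of $(x,y)$ in the specified sets $A^s_{(a,b)}$ when coordinates vanish, not merely ``player $s$ prefers some pair using the free room''), which the rental hypotheses supply only after the standard enlargement of preference sets by free rooms.
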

Further results of this nature can be found in \cite{NSZ, ABBSZ}

% \begin{theorem}[\textcolor{blue}{Is there a citation for this?}]
% Suppose that subsets $B^j_i \subset \Delta_{n-1}$ for $j,i \in [n]$ are either all open or all closed and satisfy the following conditions for all $j\in [n]$:
% \begin{enumerate}
%     \item $\bigcup_{i=1}^n B_i^j = \Delta_{n-1}$,
%     \item For a point $x=(x_1,\dots,x_n) \in \Delta_{n-1}$, $x_i = 0 \implies x\in B_i^j$.
% \end{enumerate}
%     Then there exists a permutation $\pi$ of $[n]$ such that $\bigcap_{i=1}^n B_{\pi(i)}^i \neq \emptyset$.
% \end{theorem}

\subsection{Disproportionate division}\label{sec:disdiv}

Fair division is fairly well-understood when we wish to divide the goods ``evenly" between the players. However, the problem  is not yet completely solved if the distribution we look for is disproportionate. More precisely, given $n$  measures $\mu_i, i\in [n],$ on $[0,1]$ and $n$ positive reals $\alpha_i, i\in [n],$  summing to 1, one wishes to find a partition of $[0,1]$ into $n$ parts $Q_1,\dots,Q_n$ (not necessarily intervals), such that $\mu_i(Q_i) = \alpha_i$.
This problem was first introduced by Segal-Halevi in \cite{SHdis}, who showed that if the numbers $\alpha_i$ are not all equal to $1/n$, then the parts $Q_i$ cannot be chosen to be intervals in general. This leads to the following question:
\begin{problem}[Segal-Halevi 2018 \cite{SHdis}]\label{prob:dis}
    Find the minimum number $q$ such that for every $n$ probability measures $\mu_1,\dots,\mu_n$ on $[0,1]$, and every positive reals $\alpha_1,\dots, \alpha_n$ so that $\sum_{i=1}^n\alpha_i=1$, there exists a partition of $[0,1]$ into $n$ (not necessarily convex) parts $Q_1,\dots,Q_n$ using at most $q$ cuts, such that $\mu_i(Q_i) = \alpha_i$.
\end{problem}
In \cite{CNS} it was proved that $q\le 3n-4$. The best current lower bound, found in \cite{SHdis}, is $q\ge 2n-2$.

The KKM method can solve Problem \ref{prob:dis} in the case that $\alpha_i=1/n$ for all $i$ (indeed, this is a special case of the fair division theorem). However, this case follows  by the simple greedy argument. Slide a knife starting at 0  to the right until we reach the first point $x$ for which there exists an $i$ where $\mu_i([0,x]) = 1/n$. Then let  $Q_i=[0,x]$. We now have that $\mu_j([x,1])\geq \frac{n-1}{n}$ for all $j\neq i$, so we can repeat the argument for the remaining $n-1$ measures on the interval $[x,1]$. The same argument can be used to prove the following.

\begin{theorem}
    For every $n$ probability measures $\mu_1,\dots,\mu_n$ on $[0,1]$ and  positive reals $\alpha_1,\dots, \alpha_n$ so that $\sum_{i=1}^n\alpha_i=1$, there exists a permutation $\pi:[n]\to[n]$ and a partition of $[0,1]$ into $n$ intervals $Q_1,\dots,Q_n$, such that $\mu_i(Q_{\pi(i)}) = \alpha_i$.
\end{theorem}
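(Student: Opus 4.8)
The plan is to adapt the greedy sliding-knife argument used just above for the $\alpha_i = 1/n$ case, but now matching \emph{measures to intervals} via the prescribed proportions $\alpha_i$ while simultaneously building the permutation $\pi$. Slide a knife from $0$ to the right until we first reach a point $x_1$ at which some measure $\mu_{i_1}$ has accumulated exactly its target proportion, i.e. $\mu_{i_1}([0,x_1]) = \alpha_{i_1}$. The key observation is that $i_1$ is the measure that reaches its proportion \emph{first}, so for every other index $j \neq i_1$ we have $\mu_j([0,x_1]) < \alpha_j$ and hence $\mu_j([x_1,1]) > 1 - \alpha_j \geq \sum_{\ell \neq j} \alpha_\ell$ (using $\sum \alpha_\ell = 1$). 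In particular the remaining measures still have more than enough mass on the leftover interval $[x_1,1]$ to meet their targets.

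First I would make precise the choice of $i_1$: define $t_i = \inf\{t : \mu_i([0,t]) \geq \alpha_i\}$ for each $i$, which is well-defined and finite since $\mu_i$ is a probability measure, and let $i_1$ achieve the minimum $t_{i_1} = \min_i t_i$, setting $x_1 = t_{i_1}$. By continuity of $t \mapsto \mu_i([0,t])$ (the measures are on $[0,1]$; I would note that if a measure has atoms this continuity can fail, so I would either assume the measures are atomless/absolutely continuous or handle atoms by allowing the cut to split an atom's mass, which is the standard convention), we get $\mu_{i_1}([0,x_1]) = \alpha_{i_1}$ exactly. Assign $Q_{\pi(i_1)} := [0,x_1]$ as the leftmost interval, record $\pi(i_1) = 1$, and discard $\mu_{i_1}$.

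Then I would iterate. On the remaining interval $[x_1,1]$ with the $n-1$ surviving measures, the renormalized target proportions still satisfy the crucial inequality that each surviving measure has strictly more than the sum of the other surviving targets' worth of leftover mass — this is exactly the induction hypothesis that guarantees the next sliding step succeeds and terminates before running out of cake. Repeating produces cut points $0 = x_0 < x_1 < \dots < x_{n-1} < x_n = 1$, a sequence of distinct measures $i_1, \dots, i_n$, and the assignment of the $r$-th interval $[x_{r-1}, x_r]$ to measure $i_r$ with $\mu_{i_r}([x_{r-1},x_r]) = \alpha_{i_r}$. Defining $\pi$ by $\pi(i_r) = r$ gives the desired permutation and the intervals $Q_1, \dots, Q_n$ (ordered left to right) satisfy $\mu_i(Q_{\pi(i)}) = \alpha_i$ for all $i$.

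The main obstacle, and the only genuinely delicate point, is verifying that the process never stalls: one must confirm that at each stage the freshly chosen measure really attains its exact target \emph{strictly inside} the remaining interval, leaving a nonempty (indeed sufficiently massive) remainder for the others. This reduces to carefully maintaining the invariant that after $r$ steps, each of the $n-r$ remaining measures satisfies $\mu_i([x_r,1]) > \sum_{\ell \text{ remaining}, \ell \neq i} \alpha_\ell$; I expect this to follow cleanly by induction from the "first to reach its target" property, since the discarded measure consumed exactly its share and strictly less of everyone else's. I would also remark that, unlike the $\alpha_i = 1/n$ case, here the permutation $\pi$ is genuinely needed and is not free to prescribe — this is precisely the phenomenon that the preceding disproportionate-division discussion highlights, and it is why only the permuted conclusion (rather than $\mu_i(Q_i) = \alpha_i$ with $Q_i$ intervals) can hold.
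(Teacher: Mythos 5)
Your proposal follows the same greedy sliding-knife strategy that the paper itself invokes (the paper says only that ``the same argument'' as in the $\alpha_i=1/n$ case works), but as written it has a genuine gap: the invariant you propose is neither strong enough nor actually preserved by the induction, and the process can stall. What you need at stage $r$ is $\mu_j([x_{r-1},1])\geq \alpha_j$ for every surviving measure $j$, and your invariant $\mu_j([x_r,1]) > \sum_{\ell \text{ remaining},\, \ell\neq j}\alpha_\ell$ does not imply this when $\alpha_j$ exceeds the sum of the other surviving targets. Worse, the induction step fails: during stage $r$ a surviving measure $j$ may consume up to $\alpha_j$ (its \emph{own} target) of its mass, not up to $\alpha_{i_r}$, so the invariant is decremented by the wrong quantity whenever $\alpha_j > \alpha_{i_r}$. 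Concretely, take $n=2$, $\mu_1$ Lebesgue measure on $[0,1]$, $\mu_2$ uniform on $[0.8,0.81]$, $\alpha_1=0.9$, $\alpha_2=0.1$: your process selects $i_1=2$ and cuts at $x_1=0.801$, leaving $\mu_1([x_1,1])=0.199 < 0.9$, so the last measure cannot reach its target; and checking the only other assignment ($\mu_1\mapsto[0,x]$ forces $x\geq 0.9$, while $\mu_2([x,1])\geq 0.1$ forces $x\leq 0.809$) shows that \emph{no} two-interval partition satisfies even $\mu_i(Q_{\pi(i)})\geq\alpha_i$, let alone equality. So the statement in the form you are proving it --- each measure $\mu_i$ paired with its own $\alpha_i$ --- is false, and no bookkeeping could have rescued the argument.

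The version the greedy actually proves decouples the measures from ``their own'' targets: sort the targets increasingly, $\beta_1\leq\cdots\leq\beta_n$, and at stage $r$ cut at the first point where \emph{some} surviving measure has accumulated $\beta_r$ since the previous cut. Then every surviving measure consumed at most $\beta_r$ during that stage, so the invariant ``each surviving measure retains mass at least $\beta_r+\cdots+\beta_n$ on the leftover interval'' is maintained, the process never stalls, and the measures assigned to the first $n-1$ intervals receive exactly $\beta_1,\dots,\beta_{n-1}$. Note also that exact equality for the \emph{last} measure is unattainable by any argument of this shape, since the final interval is forced to be $[x_{n-1},1]$: already for $\alpha_1=\alpha_2=1/2$ with $\mu_1$ uniform on $[0,1/2]$ and $\mu_2$ uniform on $[0,1]$, the only candidate cuts are $x=1/4$ (giving $\mu_2([1/4,1])=3/4$) and $x=1/2$ (giving $\mu_1([1/2,1])=0$), so no two-interval partition achieves equality for both measures under either permutation. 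The honest conclusion is therefore exactness on the first $n-1$ assigned intervals and only the inequality $\geq\beta_n$ on the last, with the permutation acting on the targets as well as on the intervals; the paper's one-line proof glosses over both of these points, so you should not feel bound by its literal statement. Your closing remark that the permutation is genuinely needed is right in spirit, but the pairing of $\mu_i$ with its own $\alpha_i$ is precisely the part that breaks. (Your observation about atoms is correct and is handled by assuming the measures atomless, as the paper implicitly does.)
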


As mentioned in \cite{CNS}, the following conjecture, which appears to be topological in nature, will imply that $q=2n-2$ in Problem \ref{prob:dis}.

\begin{conjecture}[Crew, Narayanan, Spirkl 2019 \cite{CNS}]
    For any $n$ probability measures $\mu_1,\dots,\mu_n$
on the unit circle $S^1$
 and non-negative reals $\alpha_1,\dots, \alpha_n$ so that $\sum_{i=1}^n\alpha_i=1$, there
exists a partition  $[n] = P \cup Q$ into two nonempty sets $P,Q$ and a partition of the circle $S^1 = X \cup X^c$
into two intervals such that $\min_{i\in P} \mu_i(X)= \sum_{j\in P} \alpha_j$ and $\min_{i\in Q} \mu_i(X^c)= \sum_{j\in Q} \alpha_j$.
\end{conjecture}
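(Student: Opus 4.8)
The plan is to first reduce the two-sided condition to a single \emph{threshold} condition, and then to attack the reduced statement with the KKM method. Writing $\beta=\sum_{j\in P}\alpha_j$ and using $\mu_i(X^c)=1-\mu_i(X)$, the requirement $\min_{i\in Q}\mu_i(X^c)=\sum_{j\in Q}\alpha_j=1-\beta$ is equivalent to $\mu_i(X)\le\beta$ for every $i\in Q$, with equality for at least one such $i$. Hence the conjecture is equivalent to finding an arc $X$, a value $\beta\in(0,1)$, and a partition $[n]=P\cup Q$ into nonempty sets such that $\mu_i(X)\ge\beta$ for $i\in P$ and $\mu_i(X)\le\beta$ for $i\in Q$, with equality attained by at least one index on each side, and with the self-consistency $\sum_{j\in P}\alpha_j=\beta$. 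Equivalently, I would search for an arc $X$ together with two distinguished players $a\in P$ and $b\in Q$ satisfying $\mu_a(X)=\mu_b(X)=\beta$, all remaining players lying on the correct side of $\beta$, and $\beta=\sum_{j\in P}\alpha_j$.

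First I would model the configuration space of arcs. Following the circle parametrization used in the proofs of Theorems \ref{thm:t(4)} and \ref{thm:mass}, I would represent the two cut points of the partition $S^1=X\cup X^c$ by cumulative coordinates, so that a point of a simplex (with the two endpoints encoded by its coordinate sums, together with a coordinate recording the provisional budget split $\beta$) yields both an arc and a target value. The involution $X\mapsto X^c$, which swaps $P\leftrightarrow Q$ and $\beta\leftrightarrow 1-\beta$, would be built into this model; it is this antipodal symmetry that I expect to drive the existence proof.

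Next, for each player $i$ I would define a cover whose set $A_i^{+}$ consists of configurations where $i$ is a \emph{binding} top constraint (that is, $\mu_i(X)=\beta$ with $i$ tentatively placed in $P$), and $A_i^{-}$ the analogous binding bottom constraint. The degenerate boundary behaviour (an empty or full arc forces all players to one side) should guarantee the KKM covering condition on the relevant faces, so that the colorful KKM theorem (Theorem \ref{colkkm}), or more likely the balanced-faces conclusion of the KKMS theorem (Theorem \ref{kkms}), produces a configuration at which several of these binding constraints hold at once. Translating the resulting common point back through the parametrization would give the arc $X$, the tie value $\beta$, and the bipartition $P\cup Q$, with at least one binding player on each side and the budget identity $\sum_{j\in P}\alpha_j=\beta$ in force.

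The hard part will be reconciling the \emph{continuous} search over arcs with the \emph{discrete}, self-referential nature of the partition. Because the target $\beta=\sum_{j\in P}\alpha_j$ depends on the unknown set $P$, which in turn is determined by comparing each $\mu_i(X)$ to $\beta$, the threshold and the partition are coupled in a fixed-point-within-a-combinatorial-lattice manner, and $P$ jumps discontinuously as $X$ moves; keeping the equality $\sum_{j\in P}\alpha_j=\beta$ intact across these jumps is the crux. Compounding this, the arcs supply only two geometric degrees of freedom while the conclusion demands two simultaneous tight constraints (a codimension-two event) together with the budget identity, so the bare KKM theorem (Theorem \ref{thm:kkm}), which yields only a single common point, seems too weak. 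I would therefore expect the essential input to be either the balanced-faces structure of Theorem \ref{kkms}, which can certify two binding constraints at once, or a Borsuk--Ulam/Tucker argument exploiting the free part of the $X\mapsto X^c$ involution; handling the diagonal of coincident cut points, where this involution has fixed points, is the delicate step that I anticipate being the principal obstacle, and is likely the reason the statement has remained a conjecture.
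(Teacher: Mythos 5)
You should first be aware that the paper does not prove this statement: it is presented verbatim as an open conjecture of Crew, Narayanan, and Spirkl \cite{CNS}, recorded only with the remark that it would imply $q=2n-2$ in Problem \ref{prob:dis}. So there is no proof in the paper to compare against, and your proposal does not close the gap either --- it is a strategy sketch whose central step is missing, as your own final sentence concedes. What you do get right is the reduction: writing $\beta=\sum_{j\in P}\alpha_j$ and using $\mu_i(X^c)=1-\mu_i(X)$, the condition $\min_{i\in Q}\mu_i(X^c)=1-\beta$ is indeed equivalent to $\mu_i(X)\le\beta$ for all $i\in Q$ with equality attained, and symmetrically on $P$; this threshold reformulation is correct and is a reasonable first move.

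The genuine gaps are these. First, the covers you propose are structurally incompatible with the KKM machinery: you define $A_i^{\pm}$ by the \emph{binding equalities} $\mu_i(X)=\beta$, but such equality sets are (generically) codimension-one subsets of the configuration simplex and cannot satisfy any KKM covering condition, which requires the sets to cover every face of the simplex. In every application in this survey (Theorems \ref{gallai}, \ref{thm:t(4)}, \ref{thm:mass}) the cover sets are defined by \emph{inequalities} (``region $j$ contains a set,'' ``$\mu_i(C_j(x))\ge\alpha_j$''), and the tight/disjointness conclusions emerge from the common point supplied by Theorem \ref{thm:kkm} or \ref{colkkm}; your plan inverts this and the covering hypothesis fails at the outset. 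Second, you supply no mechanism enforcing the self-referential identity $\beta=\sum_{j\in P}\alpha_j$: since $P$ is determined by comparing each $\mu_i(X)$ to $\beta$ and jumps discontinuously as $X$ moves, this fixed-point-within-a-lattice coupling is exactly what a correct proof must resolve, and naming it as ``the crux'' is not the same as resolving it. Third, the final paragraph offers a menu of candidate tools (balanced faces from Theorem \ref{kkms}, a Borsuk--Ulam or Tucker argument on the free part of the involution $X\mapsto X^c$) without verifying any hypothesis of any of them, and without handling the diagonal of coincident cut points where the involution fails to be free. In short: a sound reformulation, but the entire topological core of the argument is absent, and the one concrete construction you do specify (KKM covers by equality sets) would fail.
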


\section{Matching theory}

In this section we mention a different set of results that were proven using the KKM theorem. Those results concern Topological extensions of Hall's  theorem \cite{hall}.
A main question in graph and hypergraph theory  is to find sufficient (and sometimes also necessary) conditions for the existence of a matching in certain families of graphs or hypergraphs. Hall's marriage theorem  provides such a  condition for bipartite graphs on sides $A$ and $B$: there exists a matching saturating $A$ if and only if for every subset of vertices $X\subseteq A$, the size of the neighborhood of $X$ is at least as large as the size of $X$.

One generalization of Hall's theorem follows from Edmonds' two matroids intersection theorem \cite{edmonds1979}. For terminology and definitions in matroid theory, see \cite{oxley}. Denote by $\rho_\mathcal{M}$ (or  $\rho$ if $\mathcal{M}$ is understood from the context)  the rank function of $\mathcal{M}$. For a subset $X\subset V$, let $\mathcal{M}.X$  denote the matroid whose independent sets are the subsets $S\subset X$ for which $S \cup T$ is independent for every independent set $T \subset V\setminus X$. For two matroids $\mathcal{M}$ and $\mathcal{N}$  on the same ground set $V$,  we say that $\mathcal{M}$  {\em is  matchable to} $\mathcal{N}$ if there is a base $B$ of $\mathcal{M}$ that is independent in $\mathcal{N}$. The following consequence of Edmonds' two matroids intersection theorem is stated for instance in \cite{aharoni2006intersection}.

\begin{theorem}\label{thm:Edmonds}
Let $\mathcal{M},\mathcal{N}$ be matroids on the same ground set $V$. Then $\mathcal{M}$  is  matchable to $\mathcal{N}$ if and only if $\rho_\mathcal{N}(X) \geq \rho(\mathcal{M}.X)$ for every $X\subset V$.
\end{theorem}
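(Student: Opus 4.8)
The plan is to derive the statement directly from the min-max formula of Edmonds' two matroids intersection theorem, which we are entitled to invoke: for matroids $\mathcal{M}$ and $\mathcal{N}$ on a common ground set $V$, the maximum size of a set independent in both equals $\min_{X\subseteq V}\big(\rho_\mathcal{M}(X)+\rho_\mathcal{N}(V\setminus X)\big)$. The first step is to recast matchability in terms of this quantity. By definition $\mathcal{M}$ is matchable to $\mathcal{N}$ precisely when there is a base of $\mathcal{M}$ that is independent in $\mathcal{N}$, i.e. a common independent set of size $\rho_\mathcal{M}(V)$. Since every common independent set is in particular independent in $\mathcal{M}$ and so has size at most $\rho_\mathcal{M}(V)$, matchability is equivalent to the maximum common independent set having size exactly $\rho_\mathcal{M}(V)$. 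Combining this with Edmonds' formula, and noting that taking $X=V$ already forces the minimum to be at most $\rho_\mathcal{M}(V)$, I obtain that $\mathcal{M}$ is matchable to $\mathcal{N}$ if and only if $\rho_\mathcal{M}(X)+\rho_\mathcal{N}(V\setminus X)\ge \rho_\mathcal{M}(V)$ for every $X\subseteq V$.

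Second, I would perform the change of variable $X\mapsto V\setminus X$, rewriting this condition in the equivalent form $\rho_\mathcal{N}(X)\ge \rho_\mathcal{M}(V)-\rho_\mathcal{M}(V\setminus X)$ for every $X\subseteq V$. At this point the proof is reduced to identifying the right-hand side with $\rho(\mathcal{M}.X)$.

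The third step is the only genuinely matroid-theoretic one, and I expect it to be the main obstacle, because the independent sets of $\mathcal{M}.X$ are defined through the universally quantified requirement that $S\cup T$ be independent for \emph{every} independent $T\subseteq V\setminus X$, rather than through a single base of $V\setminus X$. The key lemma I would prove is that $\mathcal{M}.X$ coincides with the contraction $\mathcal{M}/(V\setminus X)$, so that $\rho(\mathcal{M}.X)=\rho_\mathcal{M}(V)-\rho_\mathcal{M}(V\setminus X)$. To establish this, fix a base $B$ of the restriction of $\mathcal{M}$ to $V\setminus X$. Since $B$ spans $V\setminus X$, one has $\rho_\mathcal{M}(S\cup B)=\rho_\mathcal{M}\big(S\cup(V\setminus X)\big)$ for every $S\subseteq X$, so $S\cup B$ is independent exactly when $S$ is independent in the contraction, and this does not depend on the chosen base. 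On the other hand, any independent $T\subseteq V\setminus X$ extends to such a base $B$, so ``$S\cup T$ independent for all independent $T$'' is equivalent to ``$S\cup B$ independent''. Hence the independent sets of $\mathcal{M}.X$ are exactly those of $\mathcal{M}/(V\setminus X)$, and their ranks agree. Substituting the resulting identity $\rho(\mathcal{M}.X)=\rho_\mathcal{M}(V)-\rho_\mathcal{M}(V\setminus X)$ into the inequality from the second step turns it verbatim into $\rho_\mathcal{N}(X)\ge\rho(\mathcal{M}.X)$ for all $X$, so that both implications of the theorem are obtained simultaneously.
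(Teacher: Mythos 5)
Your proposal is correct: the reduction of matchability to the min-max formula, the complementation $X\mapsto V\setminus X$, and the identification of $\mathcal{M}.X$ with the contraction $\mathcal{M}/(V\setminus X)$ (giving $\rho(\mathcal{M}.X)=\rho_\mathcal{M}(V)-\rho_\mathcal{M}(V\setminus X)$) are all sound, including the base-extension argument that handles the universal quantifier over independent $T\subseteq V\setminus X$. The paper itself gives no proof of this statement --- it is quoted from the literature as a consequence of Edmonds' intersection theorem --- and your derivation is exactly that standard route, so there is nothing to compare beyond noting agreement.
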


To see that Theorem \ref{thm:Edmonds} implies Hall's theorem, let $G$ be a bipartite graph with sides $A, B$ that satisfies Hall's condition, i.e., for all $X\subset A$, $|N(X)| \geq |X|$. Define two matroids on the same ground set $V=E(G)$ as follows. Let $\mathcal{N}$  be the matroid whose independent sets are the set of edges of $G$ whose endpoints in $A$ are disjoint, and  let $\M$  be the matroid whose independents sets are the sets of edges of $G$ whose endpoints in $B$ are disjoint. For a subset of edges $X\subseteq E(G)$, let $Y(X)\subseteq A$ be the set of endpoints of $X$ in $A$. Then $\rho_{\mathcal{N}}(X) = |Y(X)|$ and $\rho(\M.X) \leq |N(Y(X))|$. Thus, the hypothesis of Theorem \ref{thm:Edmonds} is satisfied, so $\mathcal{M}$  is  matchable to $\mathcal{N}$, meaning that $G$ has a matching saturating $A$.

 Aharoni and Berger \cite{aharoni2006intersection} generalized Theorem \ref{thm:Edmonds} by replacing $\mathcal{N}$ with a simplicial complex $\mathcal{C}$. Their proof is a beautiful application of the KKM theorem.

A simplicial complex  $\mathcal{C}$ on a ground set $V$ is a collection of subsets of $V$ satisfying that if $\sigma \in  \mathcal{C}$ and $\tau\subset \sigma$, then $\tau \in  \mathcal{C}$.
For definitions concerning simplicial complexes and connectivity see for instance \cite{matousek2003using}. 
Given a  simplicial complex $\mathcal{C}$ with ground set $V$, let $\eta(\mathcal{C})$ denote the connectivity of $\mathcal{C}$ plus 2. For a subset $X\subset V$, denote by $\mathcal{C} | X$  the subcomplex induced by $X$, i.e the simplicial complex consisting of the sets $\sigma\in \mathcal{C}$ such that $\sigma\subset X$. We say that a matroid $\mathcal{M}$ and a simplicial complex $\mathcal{C}$ on the same ground set $V$ are {\em matchable} if there is a base of $\mathcal{M}$ which is a simplex of $\mathcal{C}$.

\begin{theorem}[Aharoni - Berger 2006 \cite{aharoni2006intersection}]\label{thm:ABMatchable}
    Let $\mathcal{M}$ be a matroid and $\mathcal{C}$  a simplicial complex, both on the same ground set $V$. If $\eta(\mathcal{C} | X) \geq \rho(\mathcal{M}.X)$ for all $X\subset V$, then $\mathcal{M}$ and $\mathcal{C}$ are matchable.
\end{theorem}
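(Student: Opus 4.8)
The plan is to read this as a topological Hall–type statement and prove it with the KKM engine: model the $r=\rho(\mathcal{M})$ ``slots'' of a prospective base of $\mathcal{M}$ by the simplex $\Delta_{r-1}$ (with vertices $e_1,\dots,e_r$), build from the connectivity hypothesis a continuous map $f\colon \Delta_{r-1}\to \|\mathcal{C}\|$ into the geometric realization of $\mathcal{C}$ respecting the face structure, then pull $f$ back to a KKM cover and apply Theorem~\ref{thm:kkm}. To link faces of $\Delta_{r-1}$ with induced subcomplexes, I would fix any base $B=\{b_1,\dots,b_r\}$ of $\mathcal{M}$ and, for $i\in[r]$, set $W_i=\{v\in V : b_i\in C(v,B)\}$, where $C(v,B)$ is the fundamental circuit of $v$ with respect to $B$. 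For a face $\sigma_J=\conv\{e_i : i\in J\}$ put $X_J=\bigcup_{i\in J}W_i$. A routine matroid computation gives $X_J=V\setminus \mathrm{cl}(B\setminus\{b_i : i\in J\})$, so $\rho(\mathcal{M}.X_J)=r-\rho(V\setminus X_J)=|J|$, and the hypothesis applied to $X=X_J$ yields $\eta(\mathcal{C}|X_J)\ge \rho(\mathcal{M}.X_J)=|J|$ for every $J$.

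With this in hand I would construct $f$ skeleton by skeleton so that $f(\sigma_J)\subseteq \|\mathcal{C}|X_J\|$ for every face. On vertices this needs only $\mathcal{C}|W_i\neq\emptyset$ (from $\eta\ge 1$); having defined $f$ on $\partial\sigma_J$ inside $\|\mathcal{C}|X_J\|$, the bound $\eta(\mathcal{C}|X_J)\ge|J|$ says $\mathcal{C}|X_J$ is $(|J|-2)$-connected, exactly enough to extend the map of the sphere $\partial\sigma_J=S^{|J|-2}$ over the ball $\sigma_J$. Writing $\supp(p)$ for the carrier of $p\in\|\mathcal{C}\|$, I then pull back open vertex-stars: for $i\in[r]$ set $A_i=f^{-1}\big(\bigcup_{v\in W_i}\mathrm{st}(v)\big)$, which is open. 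If $x\in\sigma_J$ then $\supp(f(x))\subseteq X_J=\bigcup_{i\in J}W_i$, so $\supp(f(x))$ meets some $W_{i_0}$ with $i_0\in J$, giving $x\in A_{i_0}$; hence $\sigma_J\subseteq\bigcup_{i\in J}A_i$ and $\{A_1,\dots,A_r\}$ is a KKM cover. By Theorem~\ref{thm:kkm} there is $x^\ast\in\bigcap_{i=1}^r A_i$, and then $T:=\supp(f(x^\ast))$ is a face of $\mathcal{C}$ meeting every $W_i$.

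It remains to extract an actual base of $\mathcal{M}$ inside $T$. Since $\mathcal{C}$ is downward closed, any $\mathcal{M}$-base contained in $T$ is automatically a simplex of $\mathcal{C}$, so it suffices to show $\rho(T)=r$. This is the crux and the main obstacle. The KKM conclusion only gives $\bigcup_{v\in T}\{i : v\in W_i\}=[r]$, i.e.\ the fundamental circuits of the elements of $T$ jointly involve every $b_i$, which is strictly weaker than $T$ spanning $\mathcal{M}$ (a single element whose circuit meets every coordinate covers $[r]$ yet has rank $1$). The tempting fix — replacing $W_i$ by the flat-layers $\mathrm{cl}(b_1,\dots,b_i)\setminus\mathrm{cl}(b_1,\dots,b_{i-1})$, for which meeting every part does force a spanning transversal by a triangularity argument — destroys the connectivity bound, since then $\rho(\mathcal{M}.X_{\{1\}})$ may be $0$ and the map need not even be definable on the corresponding vertex. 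Reconciling the two requirements, that the assignment $X_J$ be ``large'' enough (to satisfy the KKM covering and extension conditions) yet ``structured'' enough (to force the final support to span), is exactly where the matroid must be used beyond KKM.

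To resolve this I would argue by induction (on $|V|$, or on $|V|-r$), using the exchange property to upgrade ``$T$ meets every $W_i$'' to ``$T$ spans $\mathcal{M}$.'' Concretely, I would branch on a single element $e\in V$, passing to the deletion $\mathcal{M}\setminus e$ with $\mathcal{C}$ and to the contraction $\mathcal{M}/e$ with the link $\mathrm{lk}_{\mathcal{C}}(e)$, and verify that the inequality $\eta(\mathcal{C}'|X)\ge\rho(\mathcal{M}'.X)$ is inherited by these minors; the key topological input is that deleting a vertex drops connectivity, hence $\eta$, by at most one, matching the drop of the rank function under contraction. The base case $|V|=r$ is immediate: there $\eta(\mathcal{C})\ge r$ forces $\mathcal{C}$ to be the full simplex on $V$, and $V$ is the unique base. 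The delicate point is maintaining the inequality for all $X$ simultaneously while tracking how the face-to-subset assignment interacts with the link; this bookkeeping, rather than the KKM step itself, is where the substance of the Aharoni--Berger argument lies.
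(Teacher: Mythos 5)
You set up the KKM step soundly, and your diagnosis of the crux is accurate: since $v\in W_i$ exactly when $v\notin \mathrm{cl}(B\setminus\{b_i\})$, your sets $W_1,\dots,W_r$ are precisely the $r$ fundamental cocircuits of the single base $B$, and a face $T$ of $\mathcal{C}$ meeting these $r$ cocircuits need not span $\M$. The genuine gap is in the proposed repair. Your induction rests on the ``key topological input'' that deleting a vertex drops $\eta$ by at most one, and this is false: for the cone $\mathcal{C}=\bigl\{\emptyset,\{v\},\{a\},\{b\},\{v,a\},\{v,b\}\bigr\}$ (a path, hence contractible, so $\eta(\mathcal{C})$ exceeds any bound), deleting $v$ leaves two isolated points with $\eta=1$, and $\mathrm{lk}_{\mathcal{C}}(v)$ is likewise two isolated points with $\eta=1$, so both the deletion branch and the contraction/link branch can collapse simultaneously. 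The correct inequality, $\eta(\mathcal{C})\geq\min\{\eta(\mathcal{C}\setminus v),\,\eta(\mathrm{lk}_{\mathcal{C}}(v))+1\}$, points in the opposite direction from what your induction needs: largeness of $\eta(\mathcal{C}|X)$ for all $X$ does not pass down to either minor pair, and you give no argument that a good branching element $e$ always exists. The deletion branch is also problematic on the matroid side: for $X\subseteq V\setminus\{e\}$ one has $\rho((\M\setminus e).X)=\rho_\M(V\setminus\{e\})-\rho_\M(V\setminus(X\cup\{e\}))$, which can exceed $\rho(\M.X)$ --- e.g.\ if $x$ and $e$ are parallel in $U_{1,2}$ then $\rho(\M.\{x\})=0$ but $\rho((\M\setminus e).\{x\})=1$ --- so the hypothesis can fail after deletion even though the complex is unchanged. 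This is not mere bookkeeping; it is where the argument breaks.

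The paper's proof (following Aharoni--Berger) resolves exactly the tension you isolated, but by changing the index set of the KKM cover rather than by induction: the simplex $\Delta$ has one vertex $v_D$ for \emph{every} cocircuit $D$ of $\M$, and since a set is spanning if and only if it meets every cocircuit, the common point $y\in\bigcap_D A_D$ produced by Theorem~\ref{thm:kkm} immediately yields a face of $\mathcal{C}$ of full rank, hence one containing a base of $\M$ --- no upgrading step is needed. The price is that $\Delta$ typically has far more than $r$ vertices, so the covering condition cannot be arranged by a map defined directly on $\Delta$ as in your construction; instead the map is routed through the flat complex: one builds $\xi:||\F(\M)||\rightarrow||\mathcal{C}||$ skeleton by skeleton (this is where the hypothesis $\eta(\mathcal{C}|X)\geq\rho(\M.X)$ enters, exactly as in your extension argument, with complements of flats playing the role of your sets $X_J$), arranges the carrier property that $\supp_{\F(\M)}(x)$ contains a flat disjoint from $\supp_{\mathcal{C}}(\xi(x))$, and composes with a natural map $\pi:bs(\Delta)\rightarrow\F(\M)$ defined on the barycentric subdivision. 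Replacing ``the fundamental cocircuits of one base'' by ``all cocircuits, mediated by the flat complex'' is the missing idea: your first two paragraphs are in effect a special case of this scheme, but your third and fourth do not recover the spanning conclusion.
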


In all the  applications of the KKM theorem we mentioned in previous sections, the KKM theorem was used to find a particular division of an ambient Euclidean space. The application of the KKM theorem in the proof of Theorem \ref{thm:ABMatchable} is different, as  there is no ambient Euclidean space inherently present in the theorem, and  the proof does not involve the idea of associating the points of a simplex with a division of some space.
We will briefly sketch the proof here.  For full details see \cite{aharoni2006intersection}.

Let $\M$ be a matroid and let $\mathcal{C}$ be a simplicial complex, both on the same ground set $V$. 
 A set $F\subset V$ is a \textit{flat} of $\M$ if  for every $x\in V\setminus F$, $\rho_{\M}(F\cup \{x\}) > \rho_{\M}(F)$. The {\em flat complex} of $\M$, denoted  $\F(\M)$, is the simplicial complex whose vertices are the flats of $\M$ and whose simplices are chains of flats $F_1\subset \cdots\subset F_m$. 
 The \textit{cocircuits} of $\M$ are  the circuits (minimal dependent sets) of the dual of $\M$, which is matroid whose bases are the complements of the bases of $\M$. 
 Let $||\mathcal{C}||$ denote the geometric realization of a simplicial complex $\mathcal{C}$. For a vertex $x\in V$, the support $\supp_{\mathcal{C}}(x)$ is the minimal face of $\mathcal{C}$ containing $x$.
\medskip

\noindent {\em Sketch of the proof of Theorem \ref{thm:ABMatchable}.}
 First, a continuous map $\xi: ||\F(\M)|| \rightarrow ||\mathcal{C}||$ is constructed by successively extending the map from the $(i-1)$-skeleton to the $i$-skeleton of $\F(\M)$, making use of the lower bound on the connectivity of induced subcomplexes of $\mathcal{C}$ provided by the hypothesis of Theorem \ref{thm:ABMatchable}.
 The map $\xi$ has the additional property that for all $x\in ||\F(\M)||$, $\supp_{\F(\M)}(x)$  contains a flat $F$ which is disjoint from $\supp_\mathcal{C}(\xi(x))$. 

Then a simplex $\Delta$ is introduced, where  the vertices  of $\Delta$ are indexed by the cocircuits of $\M$. In other words, for every cocircuit $D$ of $\M$, there is a corresponding vertex $v_D$ of $\Delta$. A  map $\pi:bs(\Delta) \rightarrow \F(\M)$ from the   barycentric subdivision $bs(\Delta)$ of $\Delta$ to $\F(\M)$  is then defined in natural way. 

The KKM theorem is then applied to $\Delta$. For a cocircuit $D$,  define $A_D$ to be the set of points $x$  in $\Delta$ such that the support of $\xi(\pi(x))$ in $\mathcal{C}$ has a non-empty intersection with $D$. The properties of the maps $\xi$ and $\pi$ imply that the collection $\{A_D\mid D \text{ a cocircuit of }\M\}$ is a KKM cover. The conclusion of the KKM theorem then entails  that $\bigcap_D A_D \neq \emptyset$. Therefore, for a point $y \in \bigcap_D A_D$, we have that the support of $\xi(\pi(y))$ in $\mathcal{C}$ has a nonempty intersection with each cocircuit of $\M$. However, such a set must have full rank in $\M$ and hence contains a basis of $\M$ that is simultaneously a simplex of $\mathcal{C}$, which completes the proof. \qed

\end{document}